\theoremstyle{plain}
\newtheorem{theorem}{Theorem}[section]
\newtheorem{proposition}[theorem]{Proposition}
\newtheorem{corollary}[theorem]{Corollary}
\newtheorem{lemma}[theorem]{Lemma}
\newtheorem*{theoremWONum}{Theorem}
\theoremstyle{definition}
\newtheorem{definition}[theorem]{Definition}
\newtheorem{example}[theorem]{Example}
\newtheorem{remark}[theorem]{Remark}
\newtheorem*{claimWONum}{Claim}
\newtheorem*{remarkWONum}{Remark}
\numberwithin{equation}{section}
\newcommand{\Gor}{\rm{Gor}}
\newcommand{\U}{\mathbf{U}}
\def\endproof{\hfill$\square$\medskip}
\def\CB{\mathrm{CB}}
\def\cha{\mathrm{char}\ }
\def\Hom{\mathrm{Hom}}
\def\Homog{\mathrm{Homog}}
\def\Ann{\mathrm{Ann}\ }
\def\Proj{\mathrm{Proj}\ }
\def\PGOR{\mathrm{PGOR}}
\def\ZGOR{\mathrm{ZGOR}}
\def\Hilb{\mathrm{Hilb}}
\def\Soc{\mathrm{Soc}}
\providecommand{\bysame}{\makebox[3em]{\hrulefill}\thinspace}
\def\<{\left<}
\def\>{\right>}
\def\Sym{\mathrm{Sym}}
\def\Spec{\mathrm{Spec}}
\def\U{\mathrm{U}}
\def\ns{\footnotesize \it}
\def\type{\mathrm{type}}
\def\Z{\mathfrak{Z}}
\def\Ext{\mathrm{Ext}}
\title{Inverse Systems of\\ Zero-dimensional Schemes in
$\mathbb P^n$}
\author{Young Hyun Cho\\[.05in]
{\ns Department of Mathematics, Seoul National University, Seoul
151-742, Korea.\
}\\[.2in]
Anthony Iarrobino\\[.05in]
{\ns Department of Mathematics, Northeastern University, Boston, MA 02115, USA.
}\\[.2in]}
\date{April 10, 2012}
\begin{document}

\maketitle
\begin{abstract}
The authors construct the global
Macaulay inverse system $L_\Z$ for a zero-dimensional subscheme $\Z$
of projective $n$-space $\mathbb P^n$ over an algebraically closed
field ${\sf k}$, from the local inverse systems of the irreducible
components of $\Z$. They show that when $\Z$ is locally Gorenstein a generic element $F$ of degree $d$ apolar to $\Z$ determines $\Z$  if $d$ is larger than an invariant $\beta(\Z )$.  As a consequence of this globalization, they show that a natural upper bound for the Hilbert function of Gorenstein Artin quotients of the coordinate ring of $\Z$ is achieved for large socle degree. They also show the uniqueness of generalized additive decompositions of a homogeneous form into powers of linear forms, under suitable hypotheses.
\par The main tools are elementary, but delicate. They involve a careful study of how to
homogenize a local inverse system and of the behavior of the
homogenization under a change of coordinates.
\small
\footnote{{{\bf 2010 Mathematics Subject Classification}: Primary: 14N05; Secondary: 13N10,13D40,13H10, 14C05.}\par
{{\bf keywords}: Macaulay inverse system, regularity degree, globalization, zero-dimensional scheme, Gorenstein\par Artin ring, irreducible components, generalized additive decomposition.}}
\end{abstract}
\normalsize

\section{Introduction.}\par We study Macaulay's inverse systems
for the defining ideals of a zero-dimensional (punctual) scheme $\Z$ of the
projective space $\mathbb P^n$ over an algebraically closed field
${\sf k}$. Of course, we may suppose that such schemes are contained in an
affine subspace $\mathbb A^n$ of $\mathbb P^n$. For any graded ideal
$I$ in the coordinate ring $R={\sf k}[x_1,\ldots , x_{n+1}]$ of $\mathbb
P^n$, Macaulay's inverse system $I^{-1}$ is an $R$-submodule of the
dual ring, the divided power series ring $\Gamma={{\sf k}_{DP}}[X_1,\ldots ,
X_{n+1}]$ and $I^{-1}$ contains the same information as is in the
original ideal. Thus, it is not hard to determine which inverse
systems arise from zero-dimensional schemes (Proposition
\ref{satinvsys}), or which arise from a zero-dimensional scheme
concentrated at a single point (Lemma \ref{homogdefp}).  \par

 Our main work here begins with an Artinian quotient $A=R'/J$ of the
coordinate ring $R'={\sf k}[y_1,\ldots ,y_n]$ of affine $n$-space $\mathbb
A^n\subset \mathbb P^n: x_{n+1}=1$ that defines a zero-dimsional
subscheme $\Z\subset \mathbb A^n$, concentrated at a finite set of
points. Its $\it{local}$, or affine inverse system $L'(J)$ is an
$R'$-submodule of the completion $\widehat{\Gamma'}$ of the divided
power ring $\Gamma '={{\sf k}}_{DP}[Y_1,\ldots ,Y_n]$ dual to $R'$ --- this
completion is the $R'$-injective envelope of ${\sf k}$. We then determine
from $L'(J)$ the $\it{global}$ inverse system
$L_\Z=(I_\Z)^{-1}\subset \Gamma$ over $\mathbb P^n$ of the defining
ideal $I_\Z\subset R$ for $\Z$. Our goal is to write
$\it{generators}$ of the global inverse system
$L_\Z$, in terms of generators of the local inverse systems of the
irreducible components of $\Z$.
\par Let the zero-dimensional scheme $\Z\subset \mathbb A^n$ have degree $s$. Then the ring $A=R'/J$ has dimension $s$ as ${\sf k}$-vector space. The local, or
affine inverse system $L'(J)$ also has dimension $\dim_{{\sf k}} L'(J)=s$.
Since $J=\cap_k J(k)$, the intersection of its primary components,
the inverse system $L'(J)$ is a direct sum of the local inverse
systems $L'(J(k))=L'(J\mathcal O_{p(k)})\subset \widehat{\Gamma'}$
at the points $p(k)$ of support of $\Z$. The scheme $\Z$ has a
unique saturated global defining ideal $I_\Z\subset R$, and the coordinate
ring $\mathcal O_\Z=R/I_\Z$ has Krull dimension one. The global
Hilbert function $H_\Z=H(\mathcal O_\Z)$, satisfies 
\begin{equation*}
H_\Z=(1,\ldots
, s,s,s,\ldots ),
\end{equation*} the first difference $\Delta H_\Z$ -- often called the $h$-vector of $\Z$ -- is an
$O$-sequence of total length $s$ (Theorem \ref{zero-dim}). The
global inverse system $L_\Z$ is a non-finitely generated, graded
$R$-submodule of $\Gamma$, whose Hilbert function $H(L_\Z)$
satisfies $H(L_\Z)=H_\Z$. Suppose now that $\Z$ is concentrated at a
single point $p$. Since $(H_\Z)_i=s$ for $s\ge \tau(\Z)$, an
invariant of $\Z$, it is natural to expect that $(L_\Z)_i$ should
be a homogenization of $(L'(J))_{\le i}$ where
$L'(J)=(L_\Z)_{x_{n+1}=1}$, at least for $i\ge \alpha(\Z)$, the
$\it{socle} \, \it{degree}$ of $A$. This is what occurs (Proposition \ref{AuxMainL},
Theorem \ref{Homogp}).
\par However, in general the global Hilbert function
$H_\Z$ is not determined by the local Hilbert functions $\mathcal
O_{p(k)} / J\mathcal O_{p(k)}$ at the points of its support --- even when the
support of $\Z$ is a single point! An exception is when $J\mathcal
O_p$ is $\it{conic}$, a graded ideal in the local ring
$\mathcal O_p$ (Example \ref{homogHilbf}).\ In this $\it{conic}$
local case, we have $\Delta H_\Z=H(\mathcal O_p/J\mathcal O_p)$, and
the ideal $I_\Z$ and its global inverse system $L_\Z$ is easily read
from $J,L'$ (\cite[Lemma 6.1]{IK}, Proposition \ref{aGor}).\par
How do we determine the global inverse system $L_\Z$
from the local inverse systems $L'(J(k))$? We answer by suitably homogenizing the
local inverse systems (Definition~\ref{defhomcoordp}). Our main result is that we determine $\it{generators}$ of $L_\Z$ from generators of the local
inverse systems:  we give an algorithm to determine the
Macaulay dual $L_\Z$ of the one-dimensional coordinate ring
$\mathcal O_\Z$ directly from the local inverse systems (Lemma \ref{homoginvsys}, Theorems \ref{Homogp},
\ref{decompIS}).
 In Section \ref{2.1} we consider the case $\Z$ has support the coordinate point $p=p_0
=(0:\ldots :0:1)$ $\in \mathbb P^n$. Since $\mathcal I_p=J\mathcal
O_p$ defines an  Artinian quotient, we have $J\supset {M'}^{j+1},
M'=(y_1,\ldots ,y_n)$ for some integer $j>0$, and we may replace
$\mathcal O_p$ by $R'$. Thus $J$ has an inverse system $L'(J)\subset
\Gamma'$ and there is no need to complete to $\widehat{\Gamma '}$. However $L'(J)$ will usually not be graded. We define the
homogenization of $L'(J)$ when $p=p_0$ in Definition \ref{defhomcoordp}, then
show it is the same as $L_\Z$ and find suitable $\it{generators}$ of
$L_\Z$ in Lemmas \ref{homoginv}, and \ref{homoginvsys}.  \par
 We give examples that show a
surprising behavior of this globalization with respect to the
regularity degree $\sigma(\Z)$. The degree $i$ component
$(L_\Z)_i$ may not be determined by the local $L_i'$, and may not be the homogenization of $(L_\Z)_{\sigma(\Z)}$ to degree $i$ (Examples \ref{regdeg} to
\ref{noncloserp}), although the  socle degree component
$L_{\alpha(\Z)}$ does determine $L_\Z$ (Proposition \ref{AuxMainL}(\ref{is3})). \par
 In Section \ref{arbitrarypoint} we
 determine the global inverse system $L_\Z$ for a scheme
concentrated at an arbitrary point $p\in \mathbb A^n\subset \mathbb
P^n$. We then prove a projective space $\it{Comparison} \,\it{
Theorem}$ (Theorem~\ref{Homogp}) relating the inverse system at $p$
to one concentrated at the origin. This result is different from our
version of Macaulay's Comparison Lemma, which describes local
inverse systems at points $p\in \mathbb A^n$, as the product of a
local inverse system at the origin and an $\it{exponential}$ power
series $f_p$ (Lemma~\ref{transforma}). Rather, our
$\it{Comparison}\, \it{Theorem}$ shows that $L_\Z$ and its
generators can be obtained from $L_{\Z'}$, the corresponding
inverse system $L_{p_0}$ at the origin $p_0$ of $\mathbb A^n$, by suitably substituting the divided
powers of the linear form $L_p=\sum_k a_kX_k$ determined by the
coordinates $(a_1:\ldots :a_n:1)$ of $p$, for the powers of $Z=X_{n+1}$
in $L_{p_0}$.
\par
 We complete our study of globalization in Section \ref{schfinsup}.  The
Decomposition Theorem \ref{decompIS} handles the transition to
arbitrary zero-dimensional schemes. We discuss regularity
degree, giving an upper bound in terms of the invariant $\alpha(\Z)$
when the number of irreducible components of $\Z$ is less or equal $n+2$ (Proposition
\ref{regbound}).

In Section \ref{application} we give the application that motivated
our globalization of inverse systems. When $\Z$ is a locally
Gorenstein punctual subscheme of $\mathbb P^n$, then $\Sym(H_\Z,j)$ is an upper bound for the Hilbert function $H(A)$
of a Gorenstein Artinian (GA) quotient $A$ of $\mathcal O_\Z$,
having socle degree $j$. As a consequence of our construction of
$L_\Z$ we show that this
upper bound is always achieved by some GA quotient of $\mathcal
O_\Z$, hence for almost all GA quotients of socle degree $j$,
provided that $j$ is sufficiently large (Theorem \ref{main}).\vskip 0.3cm

\par In an earlier related paper  we showed that
there is no level Artinian algebra $A$ of Hilbert function
$H(A)=(1,3,4,5,\ldots,6,2)$ \cite{ChoI}. This shows
that Theorem \ref{main} cannot be simply extended to a scheme $\Z$ that is
locally of type two
--- having two-dimensional socle in a single degree. In a sequel paper \cite{ChoI2} we will determine
the global Hilbert function $H_\Z$ for compressed Gorenstein
subscheme $\Z\subset \mathbb P^n$. Then, using Theorem \ref{main},
we will  exhibit families $\mathbb P\mathbf{GOR}(T)$ of graded
Gorenstein Artin algebras of embedding dimension $r$ and certain
Hilbert functions $T=H(s,j,r),r\ge 5, s$ large enough, that contain
several irreducible components (Remark \ref{rem324}.)

\subsection{Inverse systems and Gorenstein subschemes of $\mathbb P^n$.}\par\subsubsection{Uses of inverse systems}
Macaulay used his inverse systems, a version of the classical notion
of apolarity, to develop a theory of primary decomposition of ideals
\cite{Mac}. Consider a subscheme $\Z=\Spec (\mathcal O_p/\mathcal
I_p)$ of affine $n$-space concentrated at the point $p$ of $\mathbb
A^n$, whose maximal ideal is $m_p\subset R'=k[y_1,\ldots ,y_n]$, and
local ring $\mathcal O_p$. We may write also $\Z=\Spec (A), A
=R'/I'$, where $A$ has finite length, and where $I'$ satisfies
$m_p\supset I'\supset m_p^{\alpha+1}$ for some $\alpha$. The affine
inverse system $L(I')\subset \widehat{\Gamma '}$ is a finite
$R'$-module, isomorphic to the dualizing module $\Omega (A)$.
The number of generators of the submodule
$L(I')\subset \widehat{\Gamma '}$ is the $\it{type}$ of $A$, the
vector space dimension of the \emph{socle} $\Soc(A)=(0:m)$
(Definition \ref{type}). In particular when $A$ is a Gorenstein
Artin algebra
--- one whose socle is a vector space of dimension $1$  --- the
local inverse system has a single generator, and was termed by
Macaulay a $\it{principal}\,\it{ system}$ \cite[\S 60]{Mac}.
\par A. Terracini translated questions about the Hilbert function of ideals of
functions vanishing to specified order at a set of general enough
points in $\mathbb P^n$ -- the $\it{interpolation}\,\it{ problem}$ --
to questions concerning the Hilbert functions of ideals generated by
powers of the corresponding linear forms. This translation has led
to new insights, some still conjectural, when $n\ge 3$
\cite{T1,T2,EhR,I5}, and also contributed to the solution of a
$\it{Waring}\, \it{ problem}$ for forms - whether a generic homogeneous form of degree $j$ can be written as a sum of powers of linear forms. The answer followed from  J. Alexander and A.
Hirschowitz's solution of the order-two interpolation problem (\cite{T2,AH,Ch2,I4},\,\cite[\S 2.1]{IK},\cite{BrOt}). Principal inverse systems
generated by certain forms associated to partitions, occurred as spaces
of $\it{harmonics}$ in the recent $n$-$\it{factorial}\,\it{
conjecture}$ in combinatorics and geometry \cite{Ha}; they are also
related to constant-coefficient partial differential equations
\cite{Rez}. Inverse systems have been studied further, sometimes as
Matlis duality/injective envelope (see \cite{N,NR},\,\cite[Chapter
10]{BS}). Related to the simpler Matlis duality are the deeper
topics of dualizing modules, residues, and local cohomology 
\cite{Lj,BS,Sch}. \par
 F.H.S.  Macaulay introduced inverse systems in the context of affine space. To
our knowledge, before 2000 when we submitted an earlier version, there had been no systematic study of
inverse systems in the context of projective spaces, beyond the case
of fat points considered by A.~Terracini and others
(\cite{T1,T2,EhR,EmI,Ge,I4}, see \cite{Tes} for an exception). Since 2000 several other authors have explored this topic, notably M. Elkadi and B.~Mourrain, and  J. Brachat in his thesis who discuss generalized additive decompositions \cite{ElMo,Bra}. There has been recent interest in non-homogeneous principal inverse systemes in connection with the study of scheme or ``cactus'' length of forms, by K. Ranestad and F.-O. Schreyer \cite{RS1,RS2} and others \cite{BeRa,BuB}. 
\par  D. M. Meyer and L. Smith develop the theory of Poincar\'{e} duality algebras - the Gorensstein case of inverse systems -- in the language of Hopf algebras. They construct new Gorenstein algebras from existing ones by using their corresponding principal inverse systems \cite{MeSm}.
L. Smith and R.~E.~Stong consider a pair of Poincar\'{e} duality algebras 
\begin{equation*}
A={\sf k}[x_1,x_2,\ldots , x_n]/J\mapsto {\sf k}[x_1,x_2,\ldots , x_n,x_{n+1}]/I=B,
\end{equation*} where $B$ is a free $A$-module and $f_J,f_I$ are generators of principal inverse systems of $J$ and $I$, respectively. They obtain $f_I$ from $f_J$ by means of a homogenization process  \cite[Theorem 2.5]{SmSt} and they make new Poincar\'{e} duality algebras from a given $A$ \cite[Corollary 3.5]{SmSt}.\vskip 0.2cm

\par A zero-dimensional scheme
$\Z \subset \mathbb P^n$ is the union of a finite number of schemes
$\Z (i)$, each supported at a single point $p(i)\in \mathbb P^n$. So we
may define dualizing modules $D(V)$ for $B$-modules $V$, where
$B=R/I_{\Z}$, as direct sums of the dualizing modules at the finite
number of points: thus $D(V)=\Hom (V,\bigoplus E(i))$, where
$E(i)=E(R/M(i))$ is the injective hull of the residue field $\sfÊk$ of
$B$ at the maximal ideal $M(i)$ at the $i$-th point $p(i)$ of the
support. This viewpoint is adopted by Curtis-Reiner \cite[p.37]{CR},
and was used by R. Michler in \cite{Mi}. However, our task is in one
sense easier, and in another harder. Easier, since our ideal
$I_\Z (i)$ includes a power of the maximal ideal $m_{p(i)}$ at
$p(i)$ so we may avoid the full injective hull and deal locally with
$\it{dual}\,\it{ polynomials}$, or dual polynomials times an
exponential (\cite{Lj,Mac}, Lemma \ref{transforma}, Remark
\ref{compare}). Harder since we wish here to consider a
global inverse system for $R/I_\Z$ that is embedded in $\Gamma$,
rather than simply being an $R$-module. We pass from the local
inverse systems for the ideals $I_{\Z (i)}$ at each point, finite
submodules of $\widehat{\Gamma '}$, to the global inverse system for
the ideal $I_{\Z}\subset R$, which is not finitely generated, but
is determined by a finite number of its elements.  \par
Gorenstein Artin (GA) algebras are minimal reductions of Gorenstein algebras.
Gorenstein algebras are a natural generalization of complete
intersections. Artin algebras, and in particular GA algebras occur
in the study of mapping germs of differentiable maps. Recently  a
category of commutative Frobenius algebras, that correspond to non-graded Gorenstein Artin
algebras have been identified with the category of two-dimensional
topological quantum field theories \cite{Ab}. \par J. Watanabe
showed that the family $\ZGOR(T)$ of all, not necessarily graded
standard GA algebras having a {\it symmetric} Hilbert function $T$ 
is fibred
over the family $\mathbb P\mathbf{GOR}(T)$ parametrizing graded GA
algebras of Hilbert function $T$
by the map $A\rightarrow Gr_m(A)$ to the associated graded algebra (\cite{Wa},\,\cite[Prop. 1.7]{I3}. 
Our work here relates to the component structure of $\mathbb
P\mathbf{GOR}(T)$ and we
hope there could be application to these other fields. 
\par The inverse system viewpoint can be used to parametrize Gorenstein Artin algebra
quotients of $R'$ having a given Hilbert function 
\cite{I3}). Several authors have studied from this or related
viewpoints $\it{compressed}\,\it{ algebras}$ --- those having a
maximum possible Hilbert function, given the socle degree and
embedding dimension (see \cite{I1,FL,Bj1,Za}).  See also the ``points \'{e}pais' discussion by J. Emsalem \cite{Em} and the foundational study of D.~Laksov \cite{La}.
\subsubsection{Main Results and Applications.}
 We first translate into the language of global inverse systems, some basic
algebraic properties of the coordinate ring $R/I_\Z$, where $I_\Z$
is the defining ideal of a zero-dimensional subscheme of $\mathbb
P^n$.  We consider such properties as ``there is a linear non-zero
divisor $\ell$ on $R/I_\Z$'', and the $\it{type}$ of $R/I_\Z$.
We then use the inverse sytems to study such questions as ``When is
$\Z$ arithmetically Gorenstein (aG)?'', and ``When can $I_\Z$ be recovered from a general form $F$
annihilated by
$I_\Z$?'' ($F$ must have sufficiently high degree). We discuss``When is $\Z$ aG?''
in Example~\ref{CayleyB}, Proposition \ref{aGor}, Corollary~\ref{aGor2}, Remark
\ref{critagor}, and in Examples \ref{critagorex1}, \ref{critagorex2}, \ref{exlink}. As to the latter
question, it is not hard to see that if $\Z$ is Gorenstein and is also either smooth, or
 concentrated at a single point and $\it{conic}$ --- defined by a homogeneous ideal $\mathcal I_p$ of
the local ring $\mathcal O_p$ ---  then we can recover $I_\Z$ from
$F$ (see \cite{Bj1},\,\cite[Lemma 6.1]{IK}). However, it is also
easy to see that the second order neighborhood of a point $p\in
\mathbb P^n, n\ge 2$, a non-Gorenstein scheme defined by ${m_p}^2$,
cannot be recovered in this manner (Example \ref{fatpt}). When can we recover $\Z$ from an Artinian
Gorenstein quotient?
\par We answer this question in Theorem \ref{main}. Given a positive integer $j$ and a sequence $H_\Z$, we let $\Sym(H_\Z,j)$ be the sequence
\begin{equation}\label{symhz}
\Sym(H_\Z,j)_i= \begin{cases}
(H_\Z)_i\ , &\text{if $i\le j/2$;}\\
(H_\Z)_{j-i}\ , &\text{if $i \ge j/2$.}
\end{cases}
\end{equation}

 We denote by
$\sigma (\Z)$ the $\it{Castelnuovo}-\it{Mumford}\,\it{ regularity}$
of $\Z$, we set $\tau(\Z)=\sigma(\Z)-1$, and let $\alpha (\Z)$ be
the $\it{maximum}\,\it{ socle}\,\it {degree}$ of the local
coordinate ring of any irreducible component $\Z(i)$ (see Definition
\ref{socled}). We let $\beta (\Z )=\tau(\Z)+\max\{ \tau (\Z ),\alpha
(\Z )\}$, $I_\Z$ be the defining ideal, $L_\Z$ its inverse system,
and now state our main result, Theorem \ref{main}.
\par

\begin{theoremWONum} {\sc Recovering the scheme $\Z$ from a Gorenstein Artin
quotient.} Let $\Z$ be a locally Gorenstein zero-dimensional
subscheme of $\mathbb P^n$ over an algebraically closed field $\sfÊk$,
$\cha \sfÊk=0$ or $\cha \sfÊk>j$, and let $ L_\Z=(I_\Z)^{-1}$. Then we have
\begin{enumerate}[\rm(i)]
\item If $j\ge \beta(\Z)$, and $F$ is a general enough element of $(L_\Z)_j$, then
$H(R/\Ann(F))=\Sym(H_\Z,j)$. \par
\item If $j\ge \beta(\Z)$, and $F$ is a general enough element of
$(L_\Z)_j$, then for $i$ satisfying $\tau(\Z)\le i\le j-\alpha (\Z)$
we have $\Ann(F)_i= (I_{\Z})_i$. Equivalently, we have $R_{j-i}
\circ F = (L_\Z)_{i}$. \par
\item If $j\ge \max\{ \beta (\Z), 2\tau (\Z)+1\}$, and $F\in (L_\Z)_j$ is general enough, then
$\Ann(F)$ determines $\Z$ uniquely. If $I_\Z$ is generated in degree
$\tau (\Z)$, then $j\ge \max\{\beta (\Z),2\tau(\Z)\}$ suffices.
\end{enumerate}
\end{theoremWONum}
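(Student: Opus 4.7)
I would begin by reducing (i) to (ii). Since $F\in(L_\Z)_j$, the Artinian quotient $A=R/\Ann(F)$ is Gorenstein with socle degree $j$, so $H(A)_i=H(A)_{j-i}$. The inclusion $\Ann(F)\supseteq I_\Z$ gives $H(A)_i\leq H_\Z(i)$, while $R_i\circ F\subseteq(L_\Z)_{j-i}$ gives $H(A)_i=\dim R_i\circ F\leq H_\Z(j-i)$. Combined with Gorenstein symmetry these yield $H(A)_i\leq\Sym(H_\Z,j)_i$ automatically; hence (i) amounts to equality for general $F$. Moreover the two formulations in (ii) are equivalent: $\Ann(F)_i=(I_\Z)_i$ iff $H(A)_i=H_\Z(i)$, iff by Gorenstein symmetry the contraction map $R_{j-i}\to(L_\Z)_i$ has image of maximal dimension $H_\Z(i)=\dim(L_\Z)_i$, iff it is surjective. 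So both (i) and (ii) reduce to showing that $r\mapsto r\circ F$ surjects $R_{j-i}$ onto $(L_\Z)_i$ for general $F\in(L_\Z)_j$ and every $i\in[\tau(\Z),j-\alpha(\Z)]$.

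Since maximal-rank conditions are Zariski open on the family parametrized by $(L_\Z)_j$, it suffices to exhibit a single $F_0$ for which surjectivity holds in every degree $i$ of the range simultaneously. To produce such an $F_0$, I would apply the Decomposition Theorem~\ref{decompIS} to write $L_\Z=\bigoplus_k L_{\Z(k)}$ along the components and set $F_0=\sum_k c_k F_k$ with generic $c_k\neq 0$, where $F_k\in(L_{\Z(k)})_j$ is the degree-$j$ homogenization---furnished by the Comparison Theorem~\ref{Homogp} via divided powers of the linear form dual to $p(k)$---of the principal local Gorenstein generator $g_k\in L'(J(k))$ (principal by the local Gorenstein hypothesis, of degree $\alpha_k\leq\alpha(\Z)$). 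The cyclic structure of each local inverse system, together with Proposition~\ref{AuxMainL}(iii) guaranteeing that $(L_\Z)_{\alpha(\Z)}$ already determines $L_\Z$, implies that contraction by $R_{j-i}$ on $F_k$ fills out $(L_{\Z(k)})_i$ whenever $j-i\geq\alpha_k$ and $i\geq\tau(\Z)$; the hypothesis $j\geq\beta(\Z)=\tau(\Z)+\max\{\tau(\Z),\alpha(\Z)\}$ is precisely what supplies the room on both ends. A Chinese-remainder-type argument, using that the components are supported at distinct points and hence separate generically, then patches the component-wise surjectivities into the global identity $R_{j-i}\circ F_0=(L_\Z)_i$.

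The main obstacle is this component-by-component verification: tracking explicitly how contractions by $R_{j-i}$ act on a single homogenized Gorenstein generator $F_k$ and confirming they span $(L_{\Z(k)})_i$ throughout the stated range, which requires a careful analysis of how $\tau(\Z)$ and the local socle degree $\alpha(\Z)$ enter the homogenization process for a scheme supported at an arbitrary point. The precise combination $\beta(\Z)=\tau(\Z)+\max\{\tau(\Z),\alpha(\Z)\}$ is dictated by this analysis.

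Finally, for (iii), part (ii) recovers $(I_\Z)_i=\Ann(F)_i$ for $\tau(\Z)\leq i\leq j-\alpha(\Z)$. Since $I_\Z$ has regularity $\sigma(\Z)=\tau(\Z)+1$, it is minimally generated in degrees $\leq\tau(\Z)+1$, and the hypothesis $j\geq\max\{\beta(\Z),2\tau(\Z)+1\}$ ensures $\tau(\Z)+1\leq j-\alpha(\Z)$, so all minimal generators of $I_\Z$ lie inside $\Ann(F)$. Saturating the ideal they generate recovers $I_\Z$, and hence $\Z$. When $I_\Z$ is already generated in degree $\tau(\Z)$, the relaxed bound $j\geq\max\{\beta(\Z),2\tau(\Z)\}$ suffices, since then degree $\tau(\Z)$ alone lies in the recoverable range.
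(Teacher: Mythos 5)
Your overall strategy coincides with the paper's: reduce (i) to (ii) by Gorenstein symmetry, prove (ii) first for each irreducible component by contracting against the degree-$j$ homogenization $F_k$ of the principal local dual generator (Lemma \ref{homoginvsys}, Theorem \ref{Homogp}, Proposition \ref{AuxMainL}(\ref{is3})), then take $F_0=\sum_k c_kF_k$ with all $c_k\neq 0$, and pass to a general $F$ by openness of the maximal-rank condition. The step you describe only as ``a Chinese-remainder-type argument'' is, however, exactly where the paper does its real work: for $i'\ge\tau(\Z)$ one proves the direct-sum decomposition $\overline{I(u)}_{i'}\oplus\overline{V(u)}_{i'}=R_{i'}/(I_\Z)_{i'}$, where $V(u)=\bigcap_{v\neq u}I(v)$ (the Claim, equation \eqref{IntEq}); it allows one to replace any $h\in R_{i'}$ by $h''\in V(u)$ with $h\circ F_k=h''\circ F_0$, which converts the componentwise statements into $R_{i'}\circ F_0\supset R_{i'}\circ F_k$ for every $k$, hence into the global identity. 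Your sketch has the right idea, but this claim --- and the bookkeeping that it needs only $i'\ge\tau(\Z)$ while the one-component identity needs $i'\ge\alpha(\Z)$, which together with $i=j-i'\ge\tau(\Z)$ is what produces $\beta(\Z)$ --- must be made explicit; it is the crux of the multi-point case rather than the single-component computation you single out.

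There is also a genuine slip in your (iii): the hypothesis $j\ge\max\{\beta(\Z),2\tau(\Z)+1\}$ does \emph{not} imply $\tau(\Z)+1\le j-\alpha(\Z)$. If $\alpha(\Z)\ge\tau(\Z)+1$ then $\beta(\Z)=\tau(\Z)+\alpha(\Z)\ge 2\tau(\Z)+1$, and $j=\tau(\Z)+\alpha(\Z)$ is allowed yet gives $j-\alpha(\Z)=\tau(\Z)<\sigma(\Z)$, so the range furnished by (ii) alone does not reach the generator degree. The repair is the one the paper uses implicitly: once (i) is known, $H(R/\Ann(F))_i=s=(H_\Z)_i$ for all $\tau(\Z)\le i\le j-\tau(\Z)$, and since $I_\Z\subset\Ann(F)$ this forces $\Ann(F)_i=(I_\Z)_i$ on that larger range; as $j\ge 2\tau(\Z)+1$ this range contains $i=\sigma(\Z)$, and $I_\Z$ is generated in degree $\sigma(\Z)$ (Theorem \ref{zero-dim}(\ref{zeroii})), so $\Ann(F)_{\sigma(\Z)}=(I_\Z)_{\sigma(\Z)}$ determines $\Z$. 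With this correction (and the same argument at $i=\tau(\Z)$ for the relaxed bound $j\ge\max\{\beta(\Z),2\tau(\Z)\}$ when $I_\Z$ is generated in degree $\tau(\Z)$), your (iii) goes through.
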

 \noindent
Thus, we may recover $\Z$ from a general dual form $F$ when $\Z$ is
locally Gorenstein and $j$ is large enough. The authors show
elsewhere that Theorem \ref{main} does not extend simply to
subschemes $\Z$ that are not Gorenstein, by showing that the
sequence $H=(1,3,4,5,...,6,2)$ cannot occur as the Hilbert function
of a level algebra --- one having socle in a single degree
(\cite{ChoI}). \par The question of which symmetric sequences $T$ of
integers are Gorenstein sequences ---  Hilbert functions
of a graded Gorenstein Artin algebra --- is open in embedding dimension
$r\ge 4$. We do not here find any new Gorenstein sequences
of the form $T=\Sym(H_\Z,j)$: each sequence $H_\Z$ already
occurs for a smooth scheme $\Z$ by \cite{Mar} (see Theorem
\ref{zero-dim}), and  Theorem \ref{main} was already known for
smooth schemes \cite{Bj1},\,\cite[Theorem 5.3E, Lemma 6.1]{IK}.
So an application of Theorem~\ref{main} is to relate
the postulation punctual Hilbert scheme $\mathbf{Hilb}^s_{\Gor
,H}(\mathbb P^n)$ parametrizing degree-$s$ Gorenstein subschemes
$\Z\subset \mathbb P^n$, satisfying $H_\Z=H$, with the scheme
$\mathbb P\mathbf{GOR}(T)$ parametrizing graded Gorenstein Artin
algebras of Hilbert function $T=\Sym(H,j)$. In this direction see also \cite{Kl1,Kl2}. \par
 In Section \ref{gad} we explore a second viewpoint on our construction of a global inverse system
from the local inverse system: we discuss $\it{generalized}\,\it{
additive}\,\it{ decomposition}$ of a form $F$ when $r>2$. A binary
form $F$ of degree $j$, always has a length-$s$ generalized additive
decomposition (GAD), with $s\le (j+2)/2$: this is either a sum of
$j$-th powers of $s$ distinct linear forms, or a sum
\begin{equation}\label{gad1}
F=\sum_i B_iL_i^{[j+1-s_i]}, \deg B_i=s_i-1, \deg L_i=1, s=\sum s_i.
\end{equation}

The existence of such an additive decomposition when $r=2$ is
equivalent to there being a form $h\in \Ann(F)$ that can be written
$h=\prod_i \ell_i^{s_i}$, where $\ell_i \circ L_i=0$. Thus, the
additive decomposition of equation \eqref{gad1} corresponds to a
zero-dimensional scheme $\Z: h=0\subset \mathbb P^1$,  whose
irreducible components $\Z_i: \ell_i^{s_i}=0$ have specified
multiplicities $s_i$. If $2s\le j+1$ then it is classical that the
GAD as in equation \eqref{gad1} is unique: see \cite[\S 1.3, Proposition 1.36, Theorem 1.43]{IK}. For any
embedding dimension, we say that a zero-dimensional scheme
$\Z\subset \mathbb P^n$ is an \emph{annihilating scheme} of the form
$F\in R$, if $I_\Z\circ F=0$. Since for a zero-dimensional scheme,
$I_\Z=\cap I_{\Z_i}$, where $\Z_i$ are the irreducible components of
$\Z$, and we have ${I_\Z}^{\perp}=\sum_i {I_{\Z_i}}^{\perp}$, it follows
that any form $F$ annihilated by $\Z$ can be written as a
$\it{generalized}\,\it{ sum}$, a sum of forms annihilated by the
components $\Z_i$. In determining very concretely the inverse
systems of $I_{\Z_i}$, we are partially answering the question, what
is a generalized additive decomposition? In particular, when $r=3$,
many forms $F$ have a $\it{tight}$ annihilating scheme $\Z_F\subset
\mathbb P^2$ that is unique. As well, there is often a unique
$\it{generalized}\,\it{additive}\,\it{ decomposition}$, up to
trivial multiplications. This occurs when the Hilbert function
$H(R/\Ann (F))$\ contains as a consecutive subsequence $(s,s,s)$. Then, there is
a unique degree-$s$ annihilating scheme according to \cite[Theorem
5.31]{IK}, and, as we shall see, a corresponding unique
$\it{generalized}\,\it{ additive}\,\it{ decomposition}$ for $F$
(Theorems \ref{GADd3} and \ref{GAD3}). Some results about uniqueness of GAD in a quite different language have occurred since our original submission in the thesis of J. Brachat \cite[\S 4]{Bra}.

\subsection{Notation and basic facts.}\label{notation}
 We will assume throughout, unless
specifically stated otherwise, that the base field $\sf k$ satisfies
$\cha \  {{\sf k}} = 0$, or $\cha \  {{\sf k}} = p >j$, where $j$ is the maximum
degree of any form considered (see Example \ref{char} for the
necessity of this assumption). We will also assume either that ${{\sf k}}$
is algebraically closed, or that all zero-dimensional schemes
considered have as support ${{\sf k}}$-rational points. Let $C$ denote the
${{\sf k}}$-vector space $\langle x_1,\ldots ,x_{n+1}\rangle$, and
$C^*=\langle X_1,\ldots ,X_{n+1}\rangle$ denote its dual; recall
that the divided power ring $\Gamma =\Gamma (C^*) ={{\sf k}}_{DP}[X_1,\ldots
,X_{n+1}]$ satisfies,
\[ \Gamma =
\bigoplus \Gamma _j=\bigoplus \Hom (R_j,{{\sf k}}), \hbox { with } \Gamma
_j=\langle \{ X^{[U]}| |U|=j\}\rangle,\]
 the span of the dual
generators to  $x^U\in R$, where here $U$ denotes the multiindex
$U=(u_0,\ldots u_n)$, of length $|U|=\sum u_i$. For convenience we
set $X^{[U]}=0$ if any component of $U$ is negative. The
multiplication in $\Gamma $ is defined by
\begin{equation}
X^{[U]}\cdot X^{[V]}= {\binom{U+V} { U}}X^{[U+V]}.
\end{equation}

 We denote by $R',\Gamma'$,
respectively, the corresponding rings $R'={{\sf k}}[y_1,\ldots ,y_n]$ and
$\Gamma '=$\linebreak ${{\sf k}}_{DP}[Y_1,\ldots ,Y_n]$, respectively. We
have $\Gamma '\cong E'=\Hom _{R'}(R',R'/M'), M'=(y_1,\ldots ,y_n)$.
We denote by $\widehat{\Gamma '}$ the completion of $\Gamma '$ with
respect to $M'$; thus, $\widehat{\Gamma '}$ is a divided power
series ring. The rings $R',\Gamma'$ correspond to the point
$p_0=(0:\ldots :0:1)$ in $\mathbb P^n$, whose maximal ideal is
$m_{p_0}=(x_1,\ldots ,x_n)\subset R$.  We recall
 below the contraction action of
$R={{\sf k}}[x_1,\ldots ,x_{n+1}]$ on the divided power ring.  Note that,
given our assumption excluding low characteristics, each theorem
about inverse systems stated in the context of the contraction
action of $R$ on $\Gamma $, has an analogue for the partial
differential operator (PDO) action of $R$ on $\mathcal
R={{\sf k}}[X_1,\ldots ,X_{n+1}]$, a second copy of the polynomial ring.
When $\cha {{\sf k}}=0$, there is a natural Gl-invariant homomorphism $\phi
:\mathcal R \rightarrow \Gamma $, $\phi (X^U) = U!X^{[U]}\in \Gamma
$ \cite[Appendix A]{IK}. To keep the
exposition simple, we will in general restrict ourselves to the
contraction action.    Note that we use here a different notation
than Macaulay's ${{\sf k}}[x_1^{-1},....,x_{n+1}^{-1}]$ for the injective
envelope $E=E({{\sf k}})$ (\cite{Mac,MeSm},\cite[Theorem 21.6]{E}). The claims
implicit in (iv),(v) of the following Definition are shown in Lemmas
\ref{pairVS} and \ref{sat} below.

\begin{definition}\label{cont}{\it Inverse Systems}\par
\begin{enumerate}[(i)]
\item  ({\it contraction action})  If $h=\sum a_Kx^K \in R, F=\sum b_UX^{[U]} \in \Gamma $,
then
\[ h\circ F = \sum_{K,U}a_Kb_UX^{[U-K]}.
\] \par
\item  ({\it partial differentiation action} --- PDO) If $h\in R, F\in \mathcal R =
{{\sf k}}[X_1,\ldots ,X_{n+1}]$, then
 \[ h\circ F = h(\partial
\ /\partial X_1,\ldots,
\partial \ /\partial X_{n+1})(F) \in \mathcal R  .\]\par
\item A {\it homogeneous inverse system} $W\subset \Gamma $ is a graded $R$-submodule of $\Gamma $ under
the contraction action. Thus $W=W_0\bigoplus \cdots \bigoplus
W_j\oplus \cdots \subset \Gamma $ is an inverse system f and only if
$\forall i\le j, R_i\circ W_j\subset W_{j-i}$.\par
\item ({\it inverse system of a graded ideal})
If $I$ is a graded ideal of $R$,
we will denote by $I^{-1}$ or by $I^{\perp}$ the homogeneous inverse system
of $I$, namely the
$R$-submodule of

$\Gamma $ given by $I^{\perp}=\oplus {{I_j}^\perp}$, where
\[
  {{I_j}^\perp}=\{ F\in \Gamma _j| \, h\circ F=0\,\,\,\,\,\forall h\in I_j\} .
\] \par
\item ({\it ideal of an inverse system}) If $W\subset \Gamma $ is an inverse system,
then we denote by $I_W$ the ideal $I_W=\Ann(W)$ where $(I_W)_j=\{h\in R_j\mid
h\circ w=0, \, \  \forall w\in W\}$.
\item ({\it local inverse system})  An inverse system in ${\widehat{\Gamma '}}$ is an $R'$-submodule of ${\widehat{\Gamma '}}$ under the contraction
action. If $J$ is any ideal of $R'={{\sf k}}[y_1,\ldots ,y_n]$, then we denote by
$J^{\perp}=J^{-1}\in {\widehat{\Gamma '}}$, the inverse system of all elements of ${\widehat{\Gamma '}}$ annihilated by
$J$, in the contraction action. The ideal $I_W\subset R'$ of an inverse system $W\subset {\widehat{\Gamma '}}$ is
the annihilator of $W$ under the contraction action.  [Warning: in general neither $J$ nor
$J^{\perp}$ is homogeneous].\par
\end{enumerate}
\end{definition} \noindent

Henceforth in this paper, inverse systems in $\Gamma$ (but not in $\Gamma ',{\widehat{\Gamma '}}$) are
assumed to be homogeneous. 
We will later need that the elements of $R_1$ act as differentials on
$\Gamma $ (Lemma \ref{transforma}).
\begin{lemma}\label{lindiff} If $\ell$ is an element of $R_1$, and $F,G \in \Gamma _u,\Gamma _v$,
respectively, then
\begin{equation}\label{diff} \ell \circ (F\cdot G)=(\ell \circ F)\cdot G+
F\cdot (\ell
\circ G).
\end{equation}
\end{lemma}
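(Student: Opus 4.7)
The plan is to reduce the Leibniz identity to a purely combinatorial identity about multinomial coefficients, specifically Pascal's rule.

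First, I would observe that both sides of \eqref{diff} are trilinear: linear in $\ell\in R_1$, and bilinear in the pair $(F,G)\in \Gamma_u\times \Gamma_v$. So it suffices to verify the identity on a basis. I take $\ell=x_i$ for some $i\in\{1,\dots,n+1\}$, and $F=X^{[U]}$, $G=X^{[V]}$ with $|U|=u$, $|V|=v$. Writing $e_i$ for the $i$-th standard multi-index, Definition \ref{cont}(i) gives $x_i\circ X^{[W]}=X^{[W-e_i]}$, with the convention that $X^{[W-e_i]}=0$ if the $i$-th entry of $W$ is zero.

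Next I would compute both sides explicitly. Using the multiplication rule $X^{[U]}\cdot X^{[V]}=\binom{U+V}{U}X^{[U+V]}$, the left side of \eqref{diff} equals
\[
x_i\circ\Bigl(\tbinom{U+V}{U}X^{[U+V]}\Bigr)=\tbinom{U+V}{U}\,X^{[U+V-e_i]}.
\]
The right side equals
\[
X^{[U-e_i]}\cdot X^{[V]}+X^{[U]}\cdot X^{[V-e_i]}=\Bigl(\tbinom{U+V-e_i}{U-e_i}+\tbinom{U+V-e_i}{U}\Bigr)X^{[U+V-e_i]}.
\]
So the lemma reduces to the multi-index identity $\binom{U+V}{U}=\binom{U+V-e_i}{U-e_i}+\binom{U+V-e_i}{U}$.

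Finally I would verify this identity. Since $\binom{U+V}{U}=\prod_{k=1}^{n+1}\binom{u_k+v_k}{u_k}$ and the subtraction of $e_i$ only affects the $i$-th factor, the identity reduces after cancellation of the common factors $\prod_{k\ne i}\binom{u_k+v_k}{u_k}$ to the ordinary Pascal identity
\[
\binom{u_i+v_i}{u_i}=\binom{u_i+v_i-1}{u_i-1}+\binom{u_i+v_i-1}{u_i}.
\]
The only small nuisance, and the main obstacle, is bookkeeping the edge cases: when $u_i=0$ one must interpret $\binom{u_i+v_i-1}{u_i-1}=0$ consistently with $X^{[U-e_i]}=0$, and symmetrically when $v_i=0$; the degenerate case $u_i=v_i=0$ makes both sides vanish since $X^{[U+V-e_i]}=0$. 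Once these conventions are made explicit the identity holds in all cases, which completes the verification on monomials and hence, by trilinearity, proves \eqref{diff}.
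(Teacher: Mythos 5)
Your proof is correct and follows essentially the same route as the paper: reduce by multilinearity to $\ell=x_i$ and divided-power monomials, then invoke Pascal's identity in the $i$-th coordinate. You have merely written out in multi-index detail (including the edge cases) what the paper's proof sketches by passing to the one-variable case $R={\sf k}[x]$, $\Gamma={\sf k}_{DP}[X]$.
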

\proof By bilinearity, it suffices to show \eqref{diff} when $\ell$ is a variable,
and $F,G$ are monomials, whence it suffices to show it when $R={{\sf k}}[x], \Gamma ={{\sf k}}[X]$ in a single
variable, and for $
\ell = x, F=X^{[a]}, G=X^{[b]}$. There, it results from the definition of the multiplication
in the divided power ring $\Gamma $, and the usual Pascal triangle binomial identity.
\endproof
\par
\noindent We need a simple result relating inverse systems and ideals. First
we recall
\begin{definition}
\begin{enumerate}[(i)]
\item
If $V\subset R_j$, and $i\ge 0$, we have $R_i\cdot V=\langle hv\mid
h\in R_i,v\in V\rangle$;  if also $i\le j$ we have $(V:R_i)=\langle
h\in R_{j-i}\mid R_ih\subset V\rangle$. \par
\item If $W\subset \Gamma _j$ and $i\ge 0$, we have $R_{-i}\circ W =_{def}
(W:R_i)=\langle \{F\in \Gamma _{j+i}\mid R_i\circ F \subset
W\}\rangle$. If $W\subset \Gamma _j$ and $0\le i \le j$ we have
$R_i\circ W=\langle h\circ w\mid h\in R_i,w\in W\rangle$.
\end{enumerate}
\end{definition}
\begin{lemma}\label{pairVS}{\sc Inverse system and Matlis duality.} Assume that $(V,W)$ is a pair of vector
spaces satisfying
$ V\subset R_j,\ W\subset \Gamma _j$ and
$V^{\perp}\cap \Gamma _j=W$. Then
\begin{enumerate}[\rm (i)]
\item\label{pairVSi} If $0\le i$, then  $\quad (R_i\cdot V)^{\perp}\cap \Gamma _{j+i} =
W:R_i.$
\item\label{pairVSii} If $0\le i \le j$, then $\quad (V:R_i)^{\perp}\cap \Gamma _{j-i} =
R_i\circ W$.
\item
If $L\subset \Gamma $ is a homogeneous inverse system, then
$\Ann(L)\subset R$ is a graded ideal of $R$;
if
$I$ is a graded ideal of
$R$, then
$I^{-1}\subset
\Gamma $ is a homogeneous inverse system. Furthermore, $\Ann(L)^{-1} =L$; and $\Ann(I^{-1}) = I$.
Also $I^{-1}\cong \Hom _{\sfÊk}(R/I,{{\sf k}})$, the Matlis dual of $R/I$.\par \noindent
\item\label{pairVSiv} If the inverse system $L'\subset \Gamma '$  (not necessarily graded) has finite
dimension as
${{\sf k}}$-vector space, then
$I'=\Ann(L')$ is an $M'$-primary ideal of $R'$, where $M'=(y_1,\ldots ,y_n)$. Conversely,
an $M'$-primary ideal $I'$ of $R'$ determines a finite-dimensional inverse system of $L(I')\subset \Gamma
'$.
\par
\noindent
\item If $I'\subset R'$ is an ideal of finite colength $c$, defining an Artin quotient $R'/I'$ with
$s$ distinct maximal ideals,  then ${I'}^{-1}\subset \widehat{\Gamma
'}$ is a dimension-$c$ inverse system of the form
${I'}^{-1}=\bigoplus _1^s L'(i), L'(i) = V'(i)f_{p(i)}$, where
$V'(i)\subset \Gamma'$ is a finite inverse system, and $f_{p(i)}$ is
a specific power series (see \eqref{fp}).
\end{enumerate}
\end{lemma}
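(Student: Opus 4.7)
My plan for Lemma \ref{pairVS} is to exploit the perfect pairing $\langle h,F\rangle = h\circ F \in \Gamma_0 = {\sf k}$ between $R_j$ and $\Gamma_j$: in the dual monomial bases, $x^K \circ X^{[U]} = \delta_{K,U}$, so the pairing is non-degenerate. This gives the basic identity $\dim V + \dim(V^\perp\cap\Gamma_j)=\dim R_j$, and will also justify the ``biduality'' $W^\perp\cap R_j=V$ under the hypothesis $V^\perp\cap \Gamma_j=W$, which is the starting point for everything else.

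For \emph{(i)}, I would compute directly: $F\in \Gamma_{j+i}$ lies in $(R_i\cdot V)^\perp$ iff $v\circ(h\circ F)=(hv)\circ F=0$ for every $h\in R_i$ and $v\in V$, which by the defining property $V^\perp\cap\Gamma_j=W$ amounts to $h\circ F\in W$ for every $h\in R_i$, i.e.\ $F\in W:R_i$. For \emph{(ii)}, the inclusion $R_i\circ W\subseteq (V:R_i)^\perp\cap\Gamma_{j-i}$ is immediate from the associativity $g\circ(h\circ w)=(gh)\circ w$ and the fact that $gh\in V$ when $g\in V:R_i$ and $h\in R_i$. Equality then follows by a dimension count: using biduality, $\Ann_{R_{j-i}}(R_i\circ W)$ equals $V:R_i$, and the perfect pairing yields $\dim(R_i\circ W)=\dim\Gamma_{j-i}-\dim(V:R_i)=\dim\bigl((V:R_i)^\perp\cap\Gamma_{j-i}\bigr)$.

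For \emph{(iii)}, I would check that $\Ann(L)$ is graded by decomposing $h=\sum h_k$ and applying it to a homogeneous $F\in L_m$: the pieces $h_k\circ F$ live in distinct graded components $\Gamma_{m-k}$, so they vanish separately. Conversely, if $I$ is graded then $I^{-1}=\bigoplus (I_j)^\perp$ is stable under $R$ because $(hg)\circ F=h\circ(g\circ F)$ and $hg\in I$ whenever $g\in I$. The identities $\Ann(L)^{-1}=L$ and $\Ann(I^{-1})=I$ follow degree-by-degree from the perfect pairing. Finally, $I^{-1}\cong \Hom_{\sf k}(R/I,{\sf k})$ is the graded Matlis dual, with the isomorphism given in each degree by the pairing.

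For \emph{(iv)}, the key observation is that under contraction each variable $y_i$ strictly lowers the ``degree support'' of an element of $\widehat{\Gamma'}$, so any single element of $\widehat{\Gamma'}$ is killed by some power $M'^N$. If $L'$ is finite-dimensional we can choose $N$ uniformly, giving $M'^N\subset \Ann(L')$, hence $\Ann(L')$ is $M'$-primary. Conversely, if $M'^N\subset I'$ then $(I')^\perp\subseteq (M'^N)^\perp$, which is the finite-dimensional span of monomials $X^{[U]}$ with $|U|<N$. For \emph{(v)}, I would invoke the Chinese Remainder decomposition $R'/I'\cong\prod_{k=1}^s R'/I'(k)$ corresponding to the primary components at the distinct points, and dualize: $(I')^{-1}=\Hom_{\sf k}(R'/I',{\sf k})=\bigoplus L'(k)$. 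The description $L'(k)=V'(k)f_{p(k)}$ with $V'(k)\subset \Gamma'$ finite and $f_{p(k)}$ the exponential-type series attached to $p(k)$ is the content of Macaulay's Comparison Lemma (the forthcoming Lemma \ref{transforma}), which translates a local inverse system at an arbitrary point $p$ into one at the origin multiplied by $f_p$. The main obstacle is really only in \emph{(v)}, since it requires a careful identification of the injective hull summands with translated divided-power submodules; everything else reduces to the perfect-pairing bookkeeping above.
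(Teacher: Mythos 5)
Your proposal is correct and follows essentially the same route as the paper: the perfect pairing of $R_j$ with $\Gamma_j$ together with the adjunction $(hv)\circ F=v\circ(h\circ F)$ handles (i)--(iii), the degree-bound argument gives (iv), and (v) is reduced, exactly as in the paper, to the primary (CRT) decomposition plus the forward references to Lemma \ref{Macaffinedual} and Macaulay's Comparison Lemma \ref{transforma}. One small caution: your aside that any single element of $\widehat{\Gamma'}$ is killed by some power of $M'$ is false (for instance $y_i\circ f_q=a_if_q$, so no power of $M'$ annihilates $f_q$ when $q\ne 0$), but this does not affect (iv), where $L'$ lies in the polynomial divided power ring $\Gamma'$ and your uniform-bound argument is valid.
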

\proof For (i), note that $(R_i \cdot V)\circ F = 0$ if and only if
$V\circ (R_i\circ F)=0$. And the last equality is equivalent to
$R_i\circ F \subset W$.\par \noindent For (ii), note that for any
$h\in R_{j-i}$, $h\circ (R_i\circ W)=0$ if and only if $R_i\circ (
h\circ W) =0$. And the last equality is equivalent to $ R_ih\subset
V$.\par \noindent For (iii), note that $I$ is a graded ideal of $R$
if for each pair of non-negative integers $(i,j),\quad R_i\cdot
I_j\subset I_{i+j}$, or, equivalently, if $(I_{i+j}:R_i) \supset
I_j$. By (ii) the latter is equivalent to $ R_i\circ \langle
I^{-1}_{i+j}\rangle \subset I^{-1}_j$, implying that $I^{-1}$ is a
homogeneous inverse system. One shows similarly that the annihilator
$\Ann(L)\subset R$ of a homogeneous inverse system $L$ is an ideal,
using (i). That the double duals are the identities in this case
follows from the exactness of the pairing $R_i\circ \Gamma _i
\rightarrow k$. For (iv), note that if $L'\subset \Gamma'$ is finite
dimensional then $L'\subset \Gamma'_{\le j}$ for some integer $j$,
hence $\Ann(L')\supset {M'}^{j+1}$, and conversely. For (v), note
that since $I'=\cap I'(i)$, the inverse system ${I'}^{-1}$ is the
direct sum of the inverse systems $L'(i)$ of the components $I(i)$
at the points $p(i)$ of support. Then use Lemmas \ref{Macaffinedual}
and \ref{transforma} below.
\endproof
\smallskip \par

Usually, a homogeneous inverse system $W\subset \Gamma$ is not
finitely generated. In fact, if $W$ is finitely generated, then
$\dim_k W$ is finite, and by Lemma \ref{pairVS}\eqref{pairVSiv} $W$
determines an Artin algebra $A_W=R/I, I=\Ann(W)$ with $I$ an
$M=(x_1,\ldots ,x_{n+1})$-primary ideal. Recall
\begin{definition}
A graded ideal $I\subset R$ is \emph{saturated} if it has no irreducible component
primary to the irrelevent ideal $M$, equivalently, if $I=I:M^{\infty}=\{f\mid \exists k
\ge 0, M^k\cdot f\subset I\}$. This is equivalent to,
\begin{equation}\label{esat1}
 \forall a, b \in \mathbb N, a\le b \quad
I_a=(I_b:R_{b-a}) = \{f\in R_a\mid R_{b-a}\cdot f \subset I_b\}.
\end{equation}
 If $\dim(R/I)=1$, $I$ is saturated if and only if
there is a linear non-zero divisor for $R/I$ in $R$.
\end{definition}
Note that the condition of equation \eqref{esat1} results from the more usual saturation condition,
\begin{equation}
\label{esat1'}\exists N\in\mathbb N\mid \forall a, \forall b\ge
\max(N,a),\ I_a=(I_b:R_{b-a}).
\end{equation}
\begin{lemma}\label{sat} {\sc Macaulay's correspondence.} There is a one-to-one correspondence between
homogeneous inverse systems $W\subset \Gamma $ and graded ideals $I$
of $R$, given by $I \mapsto I^{-1}\subset \Gamma $, and $W\mapsto
I_W= \Ann(W)\subset R$. The ideal $I_W$ is saturated if and only if
the inverse system $W$ satisfies
\begin{equation}\label{esat2}
\forall a,b \in \mathbb N, a \le b \quad
W_a=R_{b-a}\circ W_b.
\end{equation}
Furthermore, the element $\ell \in R_i$ is a non-zero divisor for
$R/I$ if and only if $W=I^{-1}$ satisfies
\begin{equation}\label{e1nzd}
\forall b\in \mathbb N, b \ge i,\text { we have }\ell \circ
W_b=W_{b-i}.
\end{equation}\label{nzd}
\end{lemma}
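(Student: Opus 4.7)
The bijective correspondence $I\leftrightarrow W=I^{-1}$, together with the double-dual identities $\Ann(I^{-1})=I$ and $\Ann(W)^{-1}=W$, is already the content of Lemma~\ref{pairVS}(iii), so the substance of the argument lies in the two graded characterizations \eqref{esat2} and \eqref{e1nzd}.

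For the saturation characterization, the plan is to apply Lemma~\ref{pairVS}(ii) with $V=I_b\subset R_b$ (so that the ``$W$'' of that lemma is our $W_b$) and with degree shift $i:=b-a$, yielding
\begin{equation*}
(I_b:R_{b-a})^\perp\cap\Gamma_a \;=\; R_{b-a}\circ W_b.
\end{equation*}
Since $W_a=(I_a)^\perp\cap\Gamma_a$ and $\perp$ is an inclusion-reversing bijection between subspaces of the finite-dimensional dual pair $(R_a,\Gamma_a)$, the equality $I_a=(I_b:R_{b-a})$ is equivalent to $W_a=R_{b-a}\circ W_b$. Quantifying over all $a\le b$ converts the saturation condition \eqref{esat1} into \eqref{esat2}.

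For the non-zero divisor characterization, the plan is to exploit the elementary adjunction $(\ell h)\circ F=h\circ(\ell\circ F)$, which is immediate from associativity of the $R$-action on $\Gamma$. Under the contraction pairings $R_c\times\Gamma_c\to{\sf k}$ this adjunction makes multiplication $\cdot\ell:R_{b-i}/I_{b-i}\to R_b/I_b$ transpose to contraction $\ell\circ-:W_b\to W_{b-i}$. Hence the former is injective in degree $b-i$ if and only if the latter is surjective in degree $b$, i.e.\ $\ell\circ W_b=W_{b-i}$. Since $\ell$ is a non-zero divisor on $R/I$ precisely when multiplication by $\ell$ is injective on $R_{b-i}/I_{b-i}$ for every $b\ge i$, this yields \eqref{e1nzd}.

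The only delicate aspect — and the main ``obstacle'' insofar as any exists — is the bookkeeping of perpendiculars and degrees: one must verify that equality (not merely inclusion) of perpendicular subspaces really lifts back to equality of the originals, and check that the range of $b$ in the inverse-system conditions captures the full ungraded non-zero divisor / saturation conditions on $I$.
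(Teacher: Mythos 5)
Your proposal is correct and follows essentially the paper's own route: the correspondence is quoted from Lemma \ref{pairVS}, the saturation criterion \eqref{esat2} is obtained from \eqref{esat1} via Lemma \ref{pairVS}(ii) exactly as in the paper, and your transpose/duality phrasing of the non-zero-divisor criterion (injectivity of $\cdot\ell$ on $(R/I)_{b-i}$ equivalent to surjectivity of $\ell\circ$ onto $W_{b-i}$) is the same adjunction-plus-perpendicularity argument the paper writes out as a chain of equivalent implications ending in $\ell\circ W_b\supset W_{b-i}$.
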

\proof The 1-1 correspondence has been shown in Lemma \ref{pairVS}.
The relation \eqref{esat2} follows from \eqref{esat1}, using Lemma
\ref{pairVS}(ii). That $\ell$ is a non-zero divisor for $R/I$ is
equivalent to
\begin{equation}\label{enzd}
 \text{ for each integer }b \ge i, \text{and }\forall h\in R_{b-i},\,
 \ell \cdot h \in I_b \,\,\text{implies  that}\, h\in
I_{b-i}.
\end{equation}
Letting $W=I^{-1}$, we may translate the implication in \eqref{enzd}
equivalently as followings:
\begin{align}
(\ell \cdot h) \circ W_b= 0& \,\,\text{implies that}\, h\circ W_{b-i}=0, \notag\\
h\circ (\ell \circ W_b) =0 &\,\,\text{implies that}\, h\circ W_{b-i}=0, \notag\\
(\ell \circ W_b)^{\perp} \cap R_{b-i} &\subset (W_{b-i})^{\perp}\cap R_{b-i}, \notag\\
\ell \circ W_b &\supset W_{b-i}. \notag
\end{align}
Since by definition $R_i \circ W_b\subset W_{b-i}$, this shows the criterion
\eqref{e1nzd}.
\endproof \par

We will term an inverse system $W$ of $\Gamma $ \emph{saturated} if
$W$ satisfies \eqref{esat2} ; that is, $W$ arises from a saturated
ideal. We now recall the definitions of socle, and type.
\begin{definition}\label{type}
 (i) Let $A$ be a local ring with the
maximal ideal $m$. Then the $\emph{socle}$ of $A$, $Soc(A)$, is
defined as $(0:m)\subset A$. Further, if $A$ is an Artin algebra,
the $\emph{type}$ of $A$ is the dimension $\dim_{\sfÊk}\Soc(A)$, and the
socle degree is the maximum degree $i$ in which $\Soc(A)_i$ is
non-zero.\par \noindent (ii) If $\Z\subset \mathbb P^n$ is a
zero-dimensional scheme, and $I_{\Z}$ is a saturated ideal defining
$\Z$, then the type of $\mathcal O_\Z=R/I_\Z$ is defined as
$\dim_{\sfÊk}(\mathcal O_\Z/\ell \mathcal O_Z)$. Here, $\ell$ is a linear
non-zero divisor of $\mathcal O_\Z$ and $\mathcal O_\Z/\ell \mathcal
O_\Z$ is the Artin local ring with the maximal ideal
$(x_1,...,x_{n+1})/(I_\Z,\ell)$.
\end{definition}\noindent
It is well known that this notion of type does not depend on the
non-zero divisor $\ell$ used: the type is 
the rank of the last module in a free $R$-resolution of $A$, and
these ranks remain the same when we quotient by any non-zero divisor.
See \cite[Lemma 1.2.19]{BH} for the analogue in the case $B$ is
local of arbitrary dimension.
\begin{corollary}\label{macdual} Suppose $I\subset R$ has inverse system $W\subset \Gamma$. The vector space
$I_j/\langle R_1\cdot I_{j-1}\rangle$ of degree-$j$ generators of
$I$ is dual to the vector space $\langle W_{j-1}:R_1\rangle /W_j$. The vector space $(I_{j+1}:R_1)/I_j$ of
degree-$j$ socle elements of $A=R/I$ is dual to the vector space $W_j/R_1W_{j+1}$ of degree-$j$ generators
of $W$.
\end{corollary}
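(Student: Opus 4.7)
The statement is a direct corollary of Lemma \ref{pairVS}, so the plan is essentially to unwind the two parts of that lemma at the right indices and combine with the perfect pairing $R_j\times \Gamma_j\to \sf k$.

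For the first duality, I would apply Lemma \ref{pairVS}(\ref{pairVSi}) with $V=I_{j-1}$, $W=W_{j-1}$, and $i=1$, obtaining the equality $(R_1\cdot I_{j-1})^{\perp}\cap \Gamma_j = W_{j-1}:R_1$. Together with the definitional equality $I_j^{\perp}\cap \Gamma_j = W_j$, the chain of inclusions $R_1\cdot I_{j-1}\subset I_j\subset R_j$ dualizes under the perfect pairing to $W_j\subset (W_{j-1}:R_1)\subset \Gamma_j$. The induced isomorphism
\begin{equation*}
I_j/(R_1\cdot I_{j-1}) \;\cong\; \Hom_{\sf k}\bigl((W_{j-1}:R_1)/W_j,{\sf k}\bigr)
\end{equation*}
is then just the statement that in a perfect finite-dimensional pairing $R_j\times \Gamma_j\to \sf k$, the quotient of a larger subspace by a smaller one on the $R$-side is dual to the quotient of the annihilator of the smaller by the annihilator of the larger on the $\Gamma$-side.

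For the second duality, I would apply Lemma \ref{pairVS}(\ref{pairVSii}) with $V=I_{j+1}$, $W=W_{j+1}$, and $i=1$, obtaining $(I_{j+1}:R_1)^{\perp}\cap \Gamma_j = R_1\circ W_{j+1}$. Combining with $I_j^{\perp}\cap \Gamma_j = W_j$, the inclusions $I_j\subset (I_{j+1}:R_1)\subset R_j$ (the first because $I$ is an ideal) dualize to $R_1\circ W_{j+1}\subset W_j\subset \Gamma_j$, and the same perfect-pairing argument yields
\begin{equation*}
(I_{j+1}:R_1)/I_j \;\cong\; \Hom_{\sf k}\bigl(W_j/(R_1\circ W_{j+1}),{\sf k}\bigr).
\end{equation*}

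The only thing that needs a brief sentence of commentary is the identification of the two quotients with the indicated spaces of generators and socle elements: $I_j/\langle R_1\cdot I_{j-1}\rangle$ is by definition the degree-$j$ part of the minimal generators of the graded ideal $I$, and $(I_{j+1}:R_1)/I_j\subset (R/I)_j$ is by definition the degree-$j$ part of the socle $(0:M)$ of $A=R/I$; dually, $W_j/R_1\circ W_{j+1}$ is the degree-$j$ part of the minimal generators of $W$ as an $R$-module (equivalently, as argued via \eqref{esat2}-style arguments, those elements of $W_j$ not obtained by contracting something from higher degree). There is no real obstacle here: once one is comfortable with the bilinear pairing $R_j\times\Gamma_j\to\sf k$ and its non-degeneracy (which was already invoked in the proof of Lemma \ref{pairVS}(iii)), the corollary is essentially a one-line consequence of the two parts of that lemma.
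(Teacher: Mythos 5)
Your argument is correct and is essentially the paper's own proof: the paper also deduces the corollary immediately from Lemma \ref{pairVS}(\ref{pairVSi}),(\ref{pairVSii}) (together with Macaulay's correspondence, Lemma \ref{sat}, in the role your perfect-pairing remarks play), applied exactly as you do with a degree shift of one. Nothing further is needed.
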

\proof This is immediate from Lemma \ref{sat} and Lemma \ref{pairVS} (\ref{pairVSi}),(\ref{pairVSii}).
\endproof \par

 We now show how to recognize the type of $I$ from the
inverse system; we then describe the inverse system of the projective closure of
a scheme. We will complete our listing of basic facts by characterizing the ideals defining
zero-dimensional schemes and their inverse systems (Theorem~\ref{zero-dim}, Proposition
\ref{satinvsys}).
\par
\begin{lemma}\label{socle} Let $I=I_\Z$ be the homogeneous saturated ideal defining a zero-dimensional
subscheme $\Z\subset \mathbb P^n$, let $W=I^{-1}\subset \Gamma $ be the inverse system of $I$. Let
$\ell\in R_1$ be a non-zero divisor for
$B=R/I$, set $A=B/\ell B$ with maximal ideal $m$. Denote by $\Gamma_{\ell}=\ell
^{\perp}\subset \Gamma$ the $R$-submodule of $\Gamma$ perpendicular to $\ell$,
and let $W_{\ell}=W\cap \Gamma_{\ell}$. Then
\begin{enumerate}[\rm(i)]\label{dualize}
\item\label{dualizei} $W_\ell$ is the dual module of $A$.
\item\label{dualizeii} $W_{\ell}/\langle M\circ W_{\ell}\rangle
\cong ( \Soc (A))^\vee$, the dual space to $\Soc (A)$.
\end{enumerate}
\end{lemma}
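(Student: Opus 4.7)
The plan is to reduce both claims to Macaulay's correspondence (Lemma \ref{sat}) and to Corollary \ref{macdual}, applied not to $I$ itself but to the larger ideal $I+\ell R$, whose inverse system turns out to be exactly $W_\ell$.

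For (\ref{dualizei}), I would first establish the identity $W_\ell = (I+\ell R)^{-1}$. Because the $\perp$ operation converts sums of graded ideals into intersections of perpendiculars, one has $(I+\ell R)^\perp = I^\perp \cap (\ell R)^\perp = W \cap (\ell R)^\perp$. The next point is that $(\ell R)^\perp = \Gamma_\ell$: if $\ell \circ F = 0$, then certainly $(\ell h)\circ F = h \circ (\ell \circ F) = 0$ for every $h\in R$; conversely, if $(\ell h)\circ F = 0$ for every $h$, then $\ell \circ F \in \Gamma$ is orthogonal to all of $R$ under the non-degenerate pairing $R_i \times \Gamma_i \to {\sf k}$, forcing $\ell \circ F = 0$. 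Thus $W_\ell = (I+\ell R)^{-1}$, and Lemma \ref{pairVS}(iii) identifies this with the graded Matlis dual of $A = R/(I+\ell R)$, proving (\ref{dualizei}).

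For (\ref{dualizeii}), first observe that $W_\ell$ is itself an $R$-submodule of $\Gamma$: if $\ell \circ F = 0$ then for every $h \in R$ we have $\ell \circ (h\circ F) = h \circ (\ell \circ F) = 0$. Since every $R_i$ with $i\ge 1$ is spanned by products of elements of $R_1$ with $R_{i-1}$, the inclusion $R_i \circ W_\ell \subseteq R_1 \circ W_\ell$ holds, so $M\circ W_\ell = R_1\circ W_\ell$. Next, since $\ell$ is a linear non-zero divisor on $B$, the graded ring $A = B/\ell B$ is Artinian of finite ${\sf k}$-dimension (its Hilbert function is the first difference of $H_\Z$), so all spaces in sight are finite-dimensional. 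Applying Corollary \ref{macdual} to the ideal $I+\ell R$ with inverse system $W_\ell$ gives, in each degree $j$, a canonical duality between the degree-$j$ socle of $A$ and $(W_\ell)_j / R_1\circ (W_\ell)_{j+1}$. Summing over $j$ and dualizing produces the desired isomorphism $W_\ell/\langle M\circ W_\ell\rangle \cong (\Soc(A))^\vee$.

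The only real step of substance is the identification $W_\ell = (I+\ell R)^{-1}$; once this is in place, both parts follow from already-developed duality machinery, and I do not anticipate a serious obstacle.
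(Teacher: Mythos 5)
Your proposal is correct and takes essentially the same route as the paper: both identify $W_\ell$ with the inverse system of $(I,\ell)$ (the paper asserts $A^\vee\cong I^{-1}\cap \ell^{\perp}$ directly, while you verify $(\ell R)^{\perp}=\Gamma_\ell$ in detail) and then obtain (ii) from the standard socle/generator duality. The only cosmetic difference is that you route (ii) through Corollary \ref{macdual}, whereas the paper manipulates $((I,\ell):M)^{\perp}$ directly; both rest on the same Lemma \ref{pairVS}, so there is no gap.
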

\begin{proof} Since $A=B/\ell B$ is isomorphic to $R/(I,\ell)$, its dual module is the inverse system
of $(I,\ell)$, so ${A}^{\vee}\cong I^{-1}\cap \ell ^{\perp}=W_\ell$: this shows (\ref{dualizei}). Also, $((I,\ell):M)$ is
perpendicular to $M\circ W_{\ell}$. Thus, we have $A=R/(I,\ell )$
and  $\Soc (A)=(0:m)=((I,\ell ):M)/(I,\ell )$, hence 
\begin{equation*}
(\Soc(
A))^\vee =(R/(I,\ell))^\vee / (R/((I,\ell ):M))^\vee{}
                    = (I,\ell)^\perp /((I,\ell):M)^\perp \cong W_{\ell}/\langle M\circ
W_{\ell}\rangle.
\end{equation*} This is (\ref{dualizeii}), and completes the proof.
\end{proof}\par
When $\Z$ is a zero-dimensional scheme of $\mathbb A^n\subset
\mathbb P^n$, its projective closure has an empty intersection with
the hyperplane at infinity: $z=0$, since $\Z$ is already closed.
However, in fact there is a graded Artinian algebra
$R'/(I_\Z)_{z=0}$ lying on the hyperplane at infinity, uniquely
determined by $\Z$, and whose Hilbert function determines
$H(R/I_\Z)$. We also show the connection with the global inverse
system. Recall that the Hilbert function $H(B)$ for an $R$-module
$B$ is the sequence $H(B)_i=\dim_kB_i$, with $B_i$ the degree-$i$
component of the associated graded module $Gr_M(B)$. We will write
Hilbert functions of submodules of $\Gamma$ in the order of
increasing degrees, so that $H(\Gamma)=H(R)$. We define the sequence
$\Delta H$ by $\Delta H_i=H_i-H_{i-1}$.
\begin{lemma}\label{pc}{\sc Projective Closure.} When $R={{\sf k}}[x_1,\ldots , x_n,z]$ and $\ell = z$ then
$\Gamma_z=z^{\perp}={{\sf k}}_{DP}[X_1,\ldots ,X_n]$. Suppose that $\Z$ is a
zero-dimensional scheme of $\mathbb A^n: z=1$, with global inverse
system $W=L_\Z$. Then $z$ is a non-zero divisor for $R/I_\Z$, and
$W_z=W\cap \Gamma_z$ satisfies
\begin{enumerate}[\rm(i)]
\item\label{pc(i)} $\Delta H(R/I_\Z) = H(R/(I_\Z,z))=H(W_z).$ \par
\item\label{pc(ii)} There is an exact sequence, $0\rightarrow W_z(i)\rightarrow
W(i)\xrightarrow{z{\scriptscriptstyle \circ}} W(i-1)\rightarrow 0$,
where the homomorphism $z{\scriptstyle \circ} :\ W(i)\rightarrow
W(i-1)$ is the contraction action of $z\in R$ on $\Gamma$ as in Definition \ref{cont}(i).
\item\label{pc(iii)} The above sequence is dual to $0\rightarrow (R/I_\Z)(i-1)\xrightarrow{m_z\cdot}
(R/I_\Z)(i)\rightarrow {{\sf k}}[x_1,\ldots ,x_n]/(I_\Z)_{z=0}\rightarrow 0$ where the homomorphism $\ m_z\cdot$ is
multiplication by
$z$.
\end{enumerate}

In (i),(ii) above, $z,W_z$ may be replaced by $\ell, W_\ell$, when
$\Z$ is an arbitrary zero-dimensional subscheme of $\mathbb P^n$,
provided $\ell$ is a non-zero divisor for $R/I_\Z$.
\end{lemma}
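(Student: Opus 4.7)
The plan proceeds in four steps, following the pattern of Lemma \ref{socle} but taking advantage of the specific geometry of $z$ as hyperplane-at-infinity.

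\emph{Step 1: $z$ is a non-zero divisor for $R/I_\Z$.} Since $\Z\subset \mathbb A^n\colon z=1$, every point of $\mathrm{Supp}(\Z)$ has nonzero $z$-coordinate. Because $I_\Z$ is saturated, the associated primes of $R/I_\Z$ are precisely the maximal ideals of the points of $\Z$, none of which contains $z$. Thus $z$ is a non-zero divisor. For the general assertion at the end, we simply assume $\ell$ is a linear non-zero divisor by hypothesis, so this step is bypassed.

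\emph{Step 2: Exact sequence (ii).} From the contraction action (Definition \ref{cont}(i)), $z\circ X^{[U]}=X^{[U-e_{n+1}]}$, which vanishes precisely when $u_{n+1}=0$. Hence $\Gamma_z=z^\perp=\mathsf{k}_{DP}[X_1,\ldots,X_n]$. The restriction $z\circ\colon W(i)\to W(i-1)$ has kernel $W(i)\cap \Gamma_z = W_z(i)$ by definition. Surjectivity is exactly condition \eqref{e1nzd} of Lemma \ref{sat}, applied to the non-zero divisor $z$. This yields the short exact sequence
\begin{equation*}
0\longrightarrow W_z(i)\longrightarrow W(i)\xrightarrow{\,z\circ\,} W(i-1)\longrightarrow 0.
\end{equation*}

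\emph{Step 3: Dualizing to (iii).} Apply $\Hom_{\mathsf k}(-,\mathsf k)$ to the sequence in Step 2, using the perfect pairing $R_i\otimes \Gamma_i\to \mathsf k$ of Lemma \ref{pairVS}. Under this pairing the contraction $z\circ\colon \Gamma_i\to \Gamma_{i-1}$ is transpose to the multiplication $m_z\cdot\colon R_{i-1}\to R_i$; restricting to $W=I_\Z^{-1}$ identifies the dual of $z\circ\colon W(i)\to W(i-1)$ with $m_z\cdot\colon (R/I_\Z)(i-1)\to (R/I_\Z)(i)$. Hence $W_z(i)^\vee \cong \bigl(R/(I_\Z,z)\bigr)_i$, which in turn equals $\bigl(\mathsf{k}[x_1,\ldots,x_n]/(I_\Z)_{z=0}\bigr)_i$ because $z$ is a coordinate variable of $R$. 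This gives the dual sequence asserted in (iii).

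\emph{Step 4: Hilbert function identity (i).} The exact sequence of Step 3 is a short exact sequence of finite-dimensional vector spaces in each degree, giving $H(R/(I_\Z,z))_i = H(R/I_\Z)_i - H(R/I_\Z)_{i-1} = \Delta H(R/I_\Z)_i$. Independently, the sequence of Step 2 gives $H(W_z)_i = H(W)_i - H(W)_{i-1}$; since $W=I_\Z^{-1}$ has $H(W)=H(R/I_\Z)$ by Matlis duality (Lemma \ref{pairVS}(iii)), the three quantities in (i) agree. The extension to an arbitrary $\Z\subset\mathbb P^n$ with $\ell$ a linear non-zero divisor is verbatim: Step 1 is replaced by the hypothesis, Step 2 uses $\Gamma_\ell=\ell^\perp$, and Steps 3--4 go through unchanged.

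The only genuinely delicate point is the duality identification in Step 3 -- matching the contraction action by $z$ with multiplication by $z$ under Matlis duality, and recognizing that the cokernel is exactly the ``ideal-at-infinity'' quotient $\mathsf{k}[x_1,\ldots,x_n]/(I_\Z)_{z=0}$; but both are immediate consequences of the degree-by-degree perfect pairing supplied by Lemma \ref{pairVS}.
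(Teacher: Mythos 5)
Your proof is correct and essentially matches the paper's: both rest on $z$ being a non-zero divisor (argued via associated primes) together with the degree-wise perfect pairing between $R_i$ and $\Gamma_i$, producing the dual pair of exact sequences and then (i) by counting dimensions. The only difference is direction — the paper takes the multiplication-by-$z$ sequence (iii) as immediate and dualizes to obtain (ii), whereas you establish (ii) directly from Lemma \ref{sat}, equation \eqref{e1nzd}, and dualize to obtain (iii); this is the same argument run in reverse.
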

\begin{proof} If $z$ were a zero divisor for $R/I_\Z$, then $z$ would be
contained in an associated prime of $I_\Z$, contradicting the
assumption $\Z\subset \mathbb A^n$. (\ref{pc(iii)}) is
immediate, That $z$ is a non-zero divisor implies \eqref{pc(iii)}.  The statement
(\ref{pc(ii)}) follows from (\ref{pc(iii)}) by dualizing, and
(\ref{pc(i)}) follows from these exact sequences by taking vector
space dimensions.
\end{proof}\par
\begin{example}\label{homogHilbf} Let $I_\Z=(xy,x^2z-y^3,x^3)\subset R={{\sf k}}[x,y,z]$; then $\Z$ is a degree-5
scheme concentrated at the point $p_0=(0:0:1)$ of $\mathbb P^2$ (the
origin of $\mathbb A^2$), having global Hilbert function
$H_\Z=H(R/I_\Z)=(1,3,5,5,\ldots )$. The Artin algebra
$A=R/(I_\Z,z)\cong {{\sf k}}[x,y]/(xy,x^3,y^3)$ has Hilbert function
$H(A)=\Delta H_\Z=(1,2,2,0)$, and is the
$\it{boundary}$ of $\Z$ on the line at infinity: $z=0$. The inverse
system $W=(I_\Z)^{-1}\subset \Gamma={{\sf k}}_{DP}[X,Y,Z]$ satisfies
\begin{align*}
W_3&=\langle X^{[2]}Z+Y^{[3]},Y^{[2]}Z,YZ^{[2]},XZ^{[2]},Z^{[3]}\rangle\\
W_2&=\langle X^{[2]},Y^{[2]},YZ,XZ,X^{[2]}\rangle\\
W_1&=\langle X,Y,Z\rangle ;\quad W_0=\langle 1 \rangle,
\end{align*}
and $W_z=\langle 1,X,Y, X^{[2]},Y^{[2]} \rangle=W\cap \Gamma_z
=W\cap {{\sf k}}_{DP}[X,Y]\subset \Gamma$ is the dual module to $A$.
\par When we consider $\Z\subset \mathbb A^2$, by setting $z=1$ in $I_\Z$, we find $I'=(xy,x^2-y^3)$, which
defines a scheme concentrated at $p_0$ of local Hilbert function
$H'=(1,2,1,1)$, different from $\Delta H_\Z$. \par If we consider
instead $\Z',$ defined by $(x^2, xy,y^4)$, we would find the same
local Hilbert function $H'$ for $\Z'$, but now $H_{\Z'} =
(1,3,4,5,\ldots)$, the sum function, since $\Z'$ is $\it{conic}$.
This example shows that the local Hilbert function $H'$ does not
determine the global Hilbert function $H_\Z$.
\end{example}
We recall next a well known result, see for example
\cite{GM,Mar,Or}. We quote most of it from \cite[Theorem 1.69]{IK}.
A scheme $\Z\subset \mathbb P^n$ is \emph{arithmetically
Cohen-Macaulay} if $R/I_\Z$ is Cohen-Macaulay; if $\dim \Z=0$, this
is equivalent to there being a non-zero divisor in $R$ for $R/I_\Z$.
Recall that $\tau (\Z)=\min\{i\mid
\dim_{\sfÊk}((R/I_\Z)_i)=s\}$.\pagebreak[3]
\begin{theorem}\label{zero-dim} {\sc Punctual schemes.} Let $\Z$
be a degree-$s$ zero-dimensional subscheme of
$\mathbb P^n$, and let
$I=I_\Z$ be its saturated defining ideal. Then $\Z$ is arithmetically Cohen-Macaulay, and
\begin{enumerate}[\rm(i)]
\item\label{zeroi} The Hilbert function $H(R/I)$ is nondecreasing in $i$, and stabilizes at the
value $s$ for $i\ge \tau(\Z)$.  We have $\tau (\Z)\le s-1$, with
equality if and only if $\Z$ is contained in a line. \par
\item\label{zeroii} The Castelnuovo-Mumford
regularity
$\sigma = \sigma (\Z)$ satisfies $\sigma = \tau (I)+1$. In particular, if $i\ge \sigma$, then
$I_i=R_{i-\sigma}\cdot I_{\sigma}$. Thus, $I$ is generated  by degree $\sigma$.\par
\item\label{zeroiii} The first difference $\Delta (H(R/I))=C=(1,c_1,\ldots
c_{\tau},0)$ is an $O$-sequence (the Hilbert function of some Artin quotient of
$R'$), with $s=\sum c_i$. \par
\item\label{zeroiv} Every
$O$-sequence $C=(1,c_1,\ldots ,c_\tau ,0 ),\  c_1\le n, \ c_\tau
\not=0, \sum c_i=s,$ occurs as $\Delta H(R/I)$ for some degree-$s$
zero-dimensional scheme $\Z$ with $ \tau (\Z)=\tau$, consisting of
smooth points.
\end{enumerate}
Conversely, any saturated ideal $I\subset R$ satisfying the  Hilbert
function conditions (\ref{zeroi}), (\ref{zeroiii}) above for
$H(R/I)$ is the defining ideal of such a zero-dimensional subscheme,
namely $\Z=\Proj (R/I)\subset \mathbb P^n$.
\end{theorem}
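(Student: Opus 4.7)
My plan is to reduce everything to the hyperplane section by a general linear form. Because $I := I_\Z$ is saturated, its associated primes are precisely the maximal ideals $M(i)$ defining the points of $\Z$, and none equals the irrelevant ideal $M$; since ${\sf k}$ is infinite, prime avoidance gives a general $\ell\in R_1$ that is a non-zero divisor on $A := R/I$. Combined with $\dim A = 1$, this yields $\mathrm{depth}\,A = 1$, i.e.\ $\Z$ is arithmetically Cohen--Macaulay.

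From the short exact sequence
$$0 \longrightarrow A(-1) \xrightarrow{\,\cdot\ell\,} A \longrightarrow \overline A \longrightarrow 0, \qquad \overline A := R/(I,\ell),$$
one reads $\Delta H(A) = H(\overline A)$. After a linear change of coordinates sending $\ell \mapsto x_{n+1}$, the quotient $\overline A$ becomes a graded Artinian quotient of $R' = {\sf k}[y_1,\ldots,y_n]$, so Macaulay's classical bound on growth of Hilbert functions shows $H(\overline A) = (1,c_1,\ldots,c_\tau,0,\ldots)$ is an $O$-sequence of total sum $s = \dim_{\sf k}\overline A = \deg\Z$; this is (iii). Summing first differences proves (i): $H(A)_i = 1+c_1+\cdots+c_i$ is nondecreasing and stabilizes at $s$ for $i \ge \tau$. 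Strict positivity of Artinian Hilbert functions up through the socle degree gives $c_i \ge 1$ for $0\le i \le \tau$, so $s \ge \tau+1$; equality forces $C=(1,1,\ldots,1)$, which by Macaulay's growth bound forces $\dim_{\sf k} I_1 = n-1$, so $\Z$ lies on the line cut out by $I_1$. For (ii), Cohen--Macaulayness of $A$ implies that Castelnuovo--Mumford regularity is preserved by quotienting by the linear non-zero divisor $\ell$, so $\mathrm{reg}(A) = \mathrm{reg}(\overline A) = \tau$ (the socle degree of the Artinian $\overline A$). Therefore $\sigma(\Z) = \mathrm{reg}(I) = \tau+1$, and the standard characterization of regularity gives $I_i = R_{i-\sigma}\cdot I_\sigma$ for $i \ge \sigma$, so $I$ is generated in degrees $\le \sigma$.

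The main obstacle is the existence statement (iv): realizing every admissible $O$-sequence by a \emph{smooth} point configuration. My plan is to invoke Macaulay's existence theorem to produce a graded Artin algebra $\overline A = R'/J$ with $H(\overline A)=C$, then apply a generic distraction (the Geramita--Maroscia lifting cited as \cite{Mar})---replacing each monomial generator $y_1^{a_1}\cdots y_n^{a_n}$ of a monomial initial ideal of $J$ by $\prod_i\prod_{j=1}^{a_i}(y_i - \alpha_{i,j})$ with generic constants $\alpha_{i,j}$---yielding a radical ideal $\widetilde I\subset R'$ whose zero set is $s$ distinct affine points. Taking the projective closure $\Z\subset \mathbb P^n$ gives $R/(I_\Z,x_{n+1})\cong\overline A$, hence $\Delta H(R/I_\Z)=C$ and $\tau(\Z)=\tau$. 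Finally, for the converse, if $I\subset R$ is saturated with $H(R/I)$ satisfying (i) and (iii), then the saturation condition produces a linear non-zero divisor and the stabilization of the Hilbert function at the constant $s$ forces $\dim(R/I)=1$; hence $R/I$ is a one-dimensional Cohen--Macaulay ring of multiplicity $s$, and $\Z := \mathrm{Proj}(R/I) \subset \mathbb P^n$ is a degree-$s$ zero-dimensional subscheme with $I_\Z = I$.
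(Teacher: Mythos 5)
Your argument is correct and follows essentially the same route as the paper's proof outline: a general linear non-zero divisor reduces everything to the Artinian quotient (so $\Delta H$ is a Macaulay $O$-sequence), part (iv) is Maroscia's lifting/distraction of monomial ideals, and the converse uses the correspondence between saturated ideals and subschemes of $\mathbb P^n$; you merely replace the paper's cohomological citations by the direct prime-avoidance and regularity-under-a-linear-section facts. One small point: for the line criterion you only prove the direction $\tau(\Z)=s-1\Rightarrow \Z$ lies on a line; the converse is the one-line observation that $\Z$ on a line gives $c_1\le 1$, whence Macaulay growth forces $c_i\le 1$ for all $i$, and $\sum c_i=s$ then yields $\tau(\Z)=s-1$.
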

\begin{proof}[Proof outline]  There are direct proofs of   (\ref{zeroi})
--(\ref{zeroiii}) in \cite{Or,GM}; see also \cite[Theorem 1.69]{IK}.
Let $I$ be an ideal of $R$, such that $R/I$ has dimension one. One
can show cohomologically that $I$ saturated is equivalent to $R/I$
being Cohen-Macaulay (see, for example, \cite[Lemma 1.67]{IK}), and
this is equivalent to a general element $\ell$ of $R_1$ being a
non-zero divisor for $R/I$. Then $\Delta H$ is the Hilbert function
of $R/(I_\Z,\ell )$, so is an $O$-sequence. That $\sigma=\tau+1$ is
the Castelnuovo-Mumford regularity is shown cohomologically. That
$\tau=s-1$ if and only if $\Z$ is on a line is a consequence of
$\Delta H$ being an O-sequence summing to $s$. So $\tau=s-1$ if and
only if $\Delta H=(1,1,\ldots ,1)$, which is equivalent to
$(H_\Z)_1=2$. P. Maroscia's result (\ref{zeroiv}) is shown by
deforming monomial ideals defining Artin quotients of $R'$ having
Hilbert function $C$ (see \cite{Mar,GM}).  The last statement
concerning a converse follows from the 1-1 correspondence between
saturated ideals of $R$ and subschemes of $\mathbb P^n$.
\end{proof} \par
The first difference $\Delta H=(1,c_1,\ldots
,c_{\sigma -1},0,\dots )$ is sometimes termed the \emph{$h$-vector} of $\Z$ (see, for example, \cite[\S
1.4]{Mig}).
\begin{proposition}\label{satinvsys}{\sc Inverse system of a punctual scheme.} The inverse system $W$ is the
inverse system of a saturated ideal $I_\Z$, where  $\Z$ a degree-$s$
zero-dimensional scheme of $ \mathbb P^n$, regular in degree $\sigma
$ if and only if
\begin{enumerate}[\rm(i)]
\item\label{satinvsysi(i)} $\dim_{\sfÊk} W_j=s\,\  \forall j\ge \sigma -1$, and \par
\item\label{satinvsysi(ii)}$\exists N\in \mathbb N\mid \forall a, \forall b\ge \max(N,a),\
W_a=R_{b-a}\circ W_b$.
\end{enumerate} \par \noindent
The condition (ii) implies the apparently stronger \eqref{esat2}.
Furthermore, if $\Z$ is such a degree $s$ scheme regular in degree
$\sigma$,  then for all $b\ge \sigma$,
\begin{equation}\label{sigmadetermines}
 W_b= W_{\sigma}:R_{b-\sigma}=\{ f\in \Gamma_b\mid R_{b-\sigma}\circ f \subset W_{\sigma}\}.
\end{equation}
\end{proposition}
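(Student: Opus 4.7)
The plan is to prove the biconditional by translating each of conditions (i), (ii) to the ideal side via Lemmas \ref{pairVS} and \ref{sat}, and then invoking Theorem \ref{zero-dim} together with its converse.

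\emph{Forward direction.} Suppose $W = I_\Z^{-1}$ for a degree-$s$ zero-dimensional scheme $\Z$ regular in degree $\sigma$. Then $\dim_{\sf k} W_j = H(R/I_\Z)_j = s$ for all $j \ge \tau(\Z) = \sigma - 1$ by Theorem \ref{zero-dim}(i), giving (i). Since $I_\Z$ is saturated, Lemma \ref{sat} gives \eqref{esat2}, and (ii) follows trivially (take $N = 0$).

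\emph{The ``apparently stronger'' equivalence.} Assume (ii); I want to show \eqref{esat2}. For $a \le b < N$, pick $c \ge \max(N, b)$. Since $R$ is a polynomial ring we have $R_{c-a} = R_{b-a} \cdot R_{c-b}$ as vector subspaces, and by associativity of the contraction action
\begin{equation*}
W_a \;=\; R_{c-a} \circ W_c \;=\; R_{b-a} \circ (R_{c-b} \circ W_c) \;=\; R_{b-a} \circ W_b,
\end{equation*}
using (ii) twice (for the pairs $(a,c)$ and $(b,c)$). Hence \eqref{esat2} holds.

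\emph{Backward direction.} Assume (i) and (ii). Let $I = \Ann(W)$; by Lemma \ref{pairVS}(iii), $I$ is a graded ideal of $R$ with $I^{-1} = W$. Translating condition (ii) via Lemma \ref{pairVS}(ii) gives $\exists\, N$ such that $I_a = (I_b : R_{b-a})$ for all $a$ and all $b \ge \max(N, a)$, which is the saturation condition \eqref{esat1'}; thus $I$ is saturated. Condition (i) yields $H(R/I)_j = s$ for $j \ge \sigma - 1$, so $R/I$ has Krull dimension one and multiplicity $s$, and a general linear form $\ell$ is a non-zero divisor. Consequently $\Delta H(R/I) = H(R/(I,\ell))$ is the Hilbert function of an Artinian quotient of a polynomial ring in $n$ variables, hence an $O$-sequence summing to $s$. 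By the converse in Theorem \ref{zero-dim}, $\Z := \Proj(R/I)$ is a degree-$s$ zero-dimensional subscheme of $\mathbb P^n$, and $\tau(\Z) = \min\{i \mid \dim(R/I)_i = s\} \le \sigma - 1$, so $\Z$ is regular in degree $\sigma$.

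\emph{The formula \eqref{sigmadetermines}.} By Theorem \ref{zero-dim}(ii), $I_b = R_{b-\sigma} \cdot I_\sigma$ for all $b \ge \sigma$. Applying Lemma \ref{pairVS}(i) with the pair $(V, W) = (I_\sigma, W_\sigma)$ and $i = b - \sigma$ gives
\begin{equation*}
W_b \;=\; I_b^{\perp} \cap \Gamma_b \;=\; (R_{b-\sigma} \cdot I_\sigma)^{\perp} \cap \Gamma_b \;=\; W_\sigma : R_{b-\sigma},
\end{equation*}
as required. The only step that is more than a routine translation is the second paragraph: it expresses the fact that on the inverse-system side, the ``eventual'' saturation condition already forces the stronger uniform one. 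This is the conceptual heart of the proposition, and it rests solely on the bilinear factorization $R_{c-a} = R_{b-a} \cdot R_{c-b}$ together with associativity of the $R$-action on $\Gamma$.
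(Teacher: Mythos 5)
Your proposal is correct and follows essentially the same route as the paper: translate (i),(ii) to the ideal side via Lemma \ref{pairVS}(ii) and \eqref{esat1'} to get saturation of $\Ann(W)$, invoke Theorem \ref{zero-dim} (and its converse) for the scheme-theoretic conclusions, and obtain \eqref{sigmadetermines} from generation of $I_\Z$ in degree $\sigma$ via Lemma \ref{pairVS}(i). The only cosmetic difference is that you verify ``(ii) $\Rightarrow$ \eqref{esat2}'' directly on the dual side using $R_{c-a}=R_{b-a}\cdot R_{c-b}$, whereas the paper gets the same fact implicitly by passing through saturation of the ideal; both are fine.
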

\proof That an inverse system $W$ arising from such a scheme $\Z$
must satisfy (i),(ii), is immediate from Lemma \ref{sat}, and
Theorem \ref{zero-dim}. Suppose conversely that $W$ satisfies
(i),(ii). The condition (\ref{satinvsysi(ii)}) implies that
$I=\Ann(W)$ is a saturated ideal, by Lemma \ref{pairVS}(ii) applied
to \eqref{esat1'}. By (i), its Hilbert polynomial is $s$, so $I$
defines a zero-dimensional scheme of degree-$s$ having regularity
degree no greater than $\sigma $. And condition
(\ref{satinvsysi(i)}) implies that $H(R/I) = (1,\ldots ,s,s,\ldots
)$, with the first $s$ occurring before degree $\sigma-1$. The two
imply that $R/I$ is Cohen-Macaulay of dimension 1, and regularity
degree no greater than $\sigma$ (see Theorem \ref{zero-dim}). By
Theorem \ref{zero-dim} (\ref{zeroii}) if $\Z$ is such a scheme, the
ideal $I=I_\Z$ is generated by degree $\sigma$; the last equation
\eqref{sigmadetermines} is a translation of this generation fact
into the inverse system language, using Lemma \ref{pairVS}
(\ref{pairVSi}).
\endproof \par

\section{Inverse system of a zero-dimensional scheme.}\label{invsyspuncsch} \par
In Section \ref{2.1} we consider a zero-dimensional scheme
$\Z\subset \mathbb P^n$ concentrated at a single point $p_0$ that is
a coordinate point. These are simpler since the local inverse system
lies in the ring $\Gamma '$. In Section~\ref{arbitrarypoint} we
study a scheme $\Z$ concentrated at an arbitary point $p$, for which
the local inverse system lies in the completion $\widehat{\Gamma
'}$. In Section \ref{schfinsup} we consider the inverse system for a general zero-dimensional
scheme $\Z$ with finite support. In each case we show how to directly
homogenize the local inverse system for $\Z$ to
obtain the global inverse system $L_\Z \subset \Gamma$ of the global
defining ideal $I_\Z \subset R$. Recall that we denote by
${m_p}\subset R$ the homogeneous ideal of the point $p$; if
$p=(a_1:\ldots :a_n:1)$, then ${m_p}=(a_1z-x_1,\ldots ,a_nz-x_n)$.
Recall also that the homogeneous ideal $I\subset R$ is concentrated
at the point $p\in \mathbb P^n$ if and only if there exists an
integer $u>0$ such that
\begin{equation}\label{eqn1}
 {m_p} \supset I \supset {m_p}^u.
\end{equation}
When $\cha {{\sf k}} = 0 \hbox{ or } \cha {{\sf k}} >j$ we have  (\cite{T1}, \cite[Theorem I]{EmI}, \cite{EhR} for $u=2$)
\begin{equation}\label{mpsannih}
({m_p}^u)^{\perp}\cap \Gamma _j = \Gamma _{u-1}\cdot L_p^{[j+1-u]}.
\end{equation}
Here, the right hand side is interpreted as $\Gamma_j$ if $u>j$.
Thus, the condition
\eqref{eqn1} corresponds to the following condition on the
inverse system

\begin{equation}\label{singpt}
 L_p^{[j]}\subset [I^{-1}]_j \subset \Gamma _{u-1}\cdot L_p^{[j+1-u]},
\end{equation}
where if $p=(a_1: \ldots :a_n:1)$ then $L_p=a_1X_1+\cdots +
a_{n}X_{n}+X_{n+1}$, and $L_p^{[j]}$ denotes the form
$L_p^{[j]}={L_p}^j/j!=\sum_{J\mid |J|=j}a^J\cdot X^{[J]}$,
proportional to the divided power $L_p^j$. We have shown
\begin{lemma}\label{homogdefp}
The following conditions are equivalent:
\begin{enumerate}[\rm(i)]
\item The homogeneous ideal $I$ of $R$ defines a zero-dimensional
scheme concentrated at the point $p$ of $\mathbb P$;
\item There exists an integer u such that ${m_p} \supset I \supset
{m_p}^u=(a_1z-x_1,\ldots ,a_nz-x_n)^u$;
\item There exists an integer $\alpha=u-1$ such that the inverse system $I^\perp$
satisfies
\begin{equation}\label{econcentatp}
{{\sf k}}_{DP}[L_p]\subset I^\perp \subset ({m_p}^u)^{\perp}=\Gamma _{\le \alpha}\cdot {{\sf k}}_{DP}[L_p].
\end{equation}
\end{enumerate}
In particular, if the homogeneous ideal $J$ of $R$ defines a
zero-dimensional scheme concentrated at
the point $p_0=(0:\ldots :0:1)\in \mathbb P^n$, then ${{\sf k}}[Z]\subset J^\perp
\subset \Gamma _{\le a}\cdot {{\sf k}}[Z],Z=X_{n+1}$ for some $a\ge 0$.
\end{lemma}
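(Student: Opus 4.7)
The plan is to handle (i)$\Leftrightarrow$(ii) by invoking what was just recalled, and then to obtain (ii)$\Leftrightarrow$(iii) by applying the Macaulay duality of Lemma~\ref{sat} to the chain of ideals in (ii), followed by an explicit identification of the two extremes using \eqref{mpsannih}.

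For (i)$\Leftrightarrow$(ii), this is the sentence immediately preceding the lemma: a homogeneous ideal $I$ is concentrated at $p\in\mathbb{P}^n$ exactly when $m_p\supset I\supset m_p^u$ for some $u>0$, since (away from the irrelevant ideal) this is equivalent to $\sqrt{I}=m_p$. So nothing new needs to be proved here.

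For (ii)$\Leftrightarrow$(iii), Lemma~\ref{pairVS}(iii) together with Lemma~\ref{sat} gives an inclusion-reversing bijection $I\mapsto I^\perp$ between graded ideals of $R$ and homogeneous inverse systems in $\Gamma$, with double perpendicular equal to the identity. Applying it to the chain $m_p\supset I\supset m_p^u$ produces $m_p^\perp\subset I^\perp\subset(m_p^u)^\perp$, and the converse direction follows by dualizing once more. It therefore remains to identify the two outer terms. By \eqref{mpsannih}, $(m_p^u)^\perp\cap\Gamma_j=\Gamma_{u-1}\cdot L_p^{[j+1-u]}$ for $j\geq u-1$ (and equals $\Gamma_j$ for $j<u$), so summing over $j$ yields $(m_p^u)^\perp=\Gamma_{\leq u-1}\cdot{\sf k}_{DP}[L_p]$; the case $u=1$ gives $m_p^\perp={\sf k}_{DP}[L_p]$.

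The only slightly delicate point is the summation step: one must verify that a typical element $g\cdot L_p^{[k]}\in\Gamma_a\cdot{\sf k}_{DP}[L_p]$ with $a\leq u-1$ and $a+k=j$ can be rewritten as an element of $\Gamma_{u-1}\cdot L_p^{[j+1-u]}$. This uses the divided-power product $L_p^{[b]}\cdot L_p^{[c]}=\binom{b+c}{b}L_p^{[b+c]}$ to peel a factor off $L_p^{[k]}$, and here the assumption $\cha{\sf k}=0$ or $\cha{\sf k}>j$ is needed to invert the relevant binomial coefficient — this is the one place the characteristic hypothesis enters. Once the outer terms are identified, the concluding particular case specializes $p=p_0=(0{:}\ldots{:}0{:}1)$, for which $L_{p_0}=X_{n+1}=Z$ and ${\sf k}_{DP}[Z]={\sf k}[Z]$, giving the displayed inclusion with $\alpha=u-1$.
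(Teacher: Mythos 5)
Your proof is correct and follows essentially the same route as the paper, which states the lemma with ``We have shown'' immediately after recalling the equivalence \eqref{eqn1} for (i)$\Leftrightarrow$(ii), the cited formula \eqref{mpsannih}, and its dual translation \eqref{singpt}. Your extra verification that $\bigoplus_j \Gamma_{u-1}\cdot L_p^{[j+1-u]}=\Gamma_{\le u-1}\cdot {\sf k}_{DP}[L_p]$ (peeling divided powers, using the characteristic hypothesis to invert the binomial coefficient) is just a spelled-out detail the paper leaves implicit.
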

The following example shows the need for our limitation on the characteristic of ${{\sf k}}$ (\S
\ref{notation}).
\begin{example}\label{char} Let $n=1$,$R={{\sf k}}[x,y], \Gamma ={{\sf k}}[X,Y]$. Choose the point
$p=(a_1:1)\in \mathbb P^1$, and $I={m_p}^2=(x-a_1y)^2$, then we have that ${[I^{\perp}]}_2$ satisfies
\begin{align} \left( a_1X+Y\right) ^{[2]}\subset{[I^{\perp}]}_2 \subset \Gamma _1\cdot L_p&=\langle X,Y
\rangle \cdot
\left( a_1X+Y\right) \notag\\
                   &=\langle 2a_1X^{[2]}+XY,
a_1XY+2Y^{[2]}\rangle ,
\end{align}
 provided $\cha {{\sf k}} \not=2$. When $\cha {{\sf k}}=2$ and $a_1=0$ the space on the right is just
$\langle XY\rangle$, so is one-dimensional, and is not all of
${({{m_p}^2)}_2^{\perp}}$, which also includes
${L_p}^{[2]}=a_1^2X^{[2]}+a_1XY+Y^{[2]}$. Thus, equation
\eqref{mpsannih} and the equality on the right of Lemma
\ref{homogdefp} \eqref{econcentatp} do not extend to characteristic
$p\not= 0$, when $p$ is less than or equal to the degree $j$ (here
$j=2$) of the forms being considered.
\end{example}
Recall that the \emph{socle degree}
$\alpha$ of a local Artin algebra $A$ of maximal ideal $m$ is the highest integer such that
$m^{\alpha}A\not=0$, but
$m^{\alpha +1}A=0$, and that the point $p_0=(0:\ldots :0:1)$. For a zero-dimensional scheme $\Z$, we now define
$\alpha(\Z)$ to be the maximum local socle degree of
$\Z$. 
\begin{definition}\label{socled} (i)If $\Z$ is a zero-dimensional scheme concentrated at $p_0$, we let
$\alpha (\Z)$ denote the highest socle degree of $(R'/J)$, where
$J\subset R'$ defines $\Z$. Equivalently, $\alpha (\Z)$ is the
highest degree of an element of $J^{-1}\in \Gamma '$. If $\Z$ is
concentrated at a point $p$, then $\alpha (\Z)$ is defined similarly
using the local ring at $p$ (see Section \ref{arbitrarypoint}).\par
\noindent (ii) More generally, if a zero-dimensional scheme $\Z$ has
decomposition $\Z=\Z(1)\cup \cdots \cup \Z(k)$ as the union of
irreducible components $\Z(1),\ldots ,\Z(k)$, each concentrated at
(distinct) points $p(1),\ldots ,p(k)$, then $\alpha(\Z)=\max\{
\alpha (\Z(1)), \ldots ,\alpha ( \Z(k))\}$ of the local socle
degrees.
\end{definition}

\subsection{Schemes concentrated at a coordinate point.}\label{2.1}
We will fix the coordinate point as $p=p_0=(0:\ldots :0:1)$; we
denote $x_{n+1},X_{n+1}$ by $z,Z$, respectively.
We let $R'={{\sf k}}[y_1,\ldots ,y_n]$ be the coordinate ring of affine
space $\mathbb A^n$, the locus on $\mathbb P^n$ where
$x_{n+1}\not=0$ and we let $\Gamma '={{\sf k}}_{DP}[Y_1,\ldots ,Y_n]$ be the
divided power ring.
Let $\mathcal I_p\subset \mathcal O_p$ be an ideal defining a
zero-dimensional scheme $\Z$ concentrated at $p$. Then $\mathcal I_p\supset
m_p^{\alpha(\Z) +1}$ and each element of $\mathcal I_p$ may be
written mod $m_p^{\alpha (\Z)+1}$ as a polynomial $h$ in
$R'={{\sf k}}[y_1,\ldots ,y_n]$ of some degree $t$ no greater than
$\alpha(\Z)$. The homogenization of $h$ to degree $u$ is
\begin{equation} \Homog(h,z,u)= z^u\cdot h(x_1/z,\ldots
,x_n/z),
\end{equation}
for $u\ge t$, and $0$ otherwise. The homogenization $I_\Z$ of
$\mathcal I_p$ is spanned by
${m_p}^{\alpha(\Z)+1}$, and by all homogenizations of such elements
$h\in \mathcal I_p$:
\begin{equation} I_\Z = \left( \Homog(h,z,u) \mid u \in
\mathbb {Z}^+,h\in \mathcal I_p \text{ degree }h\le \alpha(\Z)\right) +{m_p}^{\alpha(\Z)+1}
\end{equation}
\par
Recall that the inverse system $L_\Z$ in $\Gamma $ of $I_\Z$ consists of
all elements of $\Gamma $, annihilated by $I_\Z$.  Given a point $p=(a_1:\ldots :a_n:1)$ of
$\mathbb P^n$, we let $L_p=a_1X_1+\cdots +a_nX_n+Z\in \Gamma$.
\begin{definition}{\it Homogenization of an inverse system at a
point.}\label{defhomcoordp}
\par
\begin{enumerate}[(i)]
\item  Let $F\in\Gamma[1/Z,1/Z^{[2]},\ldots ]$. We denote by $F\cdot_{rp} Z^{[u]}$ the result of raising
the
$Z$-degree of the $Z$-factor in each term by $u$, without changing the coefficients that
appear. For example, if $F=X_1X_2/Z^{[2]}+X_2^{[4]}/Z^{[4]}\in {{\sf k}}_{DP}[Y_1,Y_2]$,
then
$F\cdot_{rp}Z^{[4]}=X_1X_2Z^{[2]}+X_2^{[4]}$. We may also write
$Z^{[u]}\cdot_{rp}F$ for  $F\cdot_{rp}Z^{[u]}$. If $w\in \Gamma$ has the form $\ w=\sum w_i\cdot
L_p^{[k-i]}, w_i\in \Gamma '$, then we denote by
 $\ w\cdot_{rp}L_p^{[u]}$ the product
\begin{equation}
w\cdot_{rp}L_p^{[u]}=\sum w_i \cdot L_p^{[k+u-i]}.
\end{equation}
\item Let
$f\in \Gamma '={{\sf k}}_{DP}[Y_1,\ldots ,Y_n]$ satisfy $f=\oplus f_i,
f_i\in \Gamma'_i$, and let $L_p=a_1X_1+\cdots +a_nX_n+Z$. Then for
any integer $u\ge 0$ we define the inverse system homogenization
\begin{equation}
\Homog(f,L_p,u) = \sum_{0\le i\le u}f_i(X_1,\ldots ,X_n)\cdot L_p^{[u-i]}.
\end{equation}
For example, if $f=Y_1Y_2+Y_2^{[4]}$, then $f(X_1/Z,X_2/Z)=F$ above, and
$\Homog(f,Z,4)=X_1X_2Z^{[2]}+X_2^{[4]}$, while $\Homog(f,Z,3)=X_1X_2Z$.
\item  Let $L'\subset \Gamma '$ be an
inverse system (so $L'$ is an $R'$-submodule of $\Gamma '$), and suppose $p$ fixed. Then we define
\begin{equation*}L'[u]=\langle \{ \Homog(f,L_p,u)\ \forall f\in L'\} \rangle \quad \quad \quad \quad
\quad
\quad
\end{equation*}
and we define the homogenization of the inverse system $L'$,
\begin{equation} \Homog(L',L_p) =\bigoplus_{u\ge 0} L'[u] =\langle \Homog(f,L_p,u)\mid f\in
L',u\ge 0\rangle .
\end{equation}
If we leave out the homogenizing form or do not specify $p$, then we assume $L_p=Z$, $p=p_0$.
\end{enumerate}
\end{definition}
\noindent Note that this
definition allows $\Homog(f,Z,u)$ to be nonzero even if $u$ is smaller than the degree
 of $f$; this
is natural here, since the global inverse system is closed under the
contraction action of
$R$. Thus, for example
\begin{equation*}
z\circ \left( X_1X_2Z^{[2]}+X_2^{[4]}\right) = X_1X_2Z.
\end{equation*}
We of course wish to show that if $L'\subset \Gamma '$ is the
inverse system of $\mathcal I_p\subset \mathcal O_p$, then
$L=\Homog(L',Z)\subset \Gamma $ is the inverse system of $I_\Z$
(Lemma \ref{homoginv}). We also wish to show how to obtain from $L'$
the key $\it{generators}$  of $L$ --- which is infinitely generated. To this end, we need a
basic result.
\begin{lemma}\label{homogann}{\sc Homogenization and duality.}
Suppose that $h'\in R'$ has degree $a$, that $f'\in \Gamma '$,
that $i\ge a$, and that $w\in \mathbb Z$.
 Let $h=h'[i]=\Homog(h',z,i)$ and $f=f'[i+w]=\Homog(f',Z,i+w)$. Then
\begin{equation}\label{ehomogann}
 h\circ f=h'[i]\circ f'[i+w] = ( h'\circ f') [w].
\end{equation}
\noindent
In particular,
\begin{equation}\label{2.11}
h'\circ f' = 0 \Rightarrow (h'\circ f')_{\le w} =
0\Leftrightarrow (h'\circ f')[w]=0
\Leftrightarrow h'[i]\circ f'[i+w]= 0;
\end{equation}
and if $f'$ has degree $b$, then
\begin{equation}\label{vanish} h'\circ f' = 0 \Leftrightarrow (h'\circ f')_{\le b}
= 0\Leftrightarrow (h'\circ f')[b]=0
\Leftrightarrow h'[i]\circ f'[i+b]= 0.
\end{equation}
\end{lemma}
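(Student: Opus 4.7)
\proofof{Lemma \ref{homogann} (proof proposal)}
The plan is to reduce the identity \eqref{ehomogann} to a monomial computation by expanding both sides linearly, and then to verify it by tracking exponents carefully. Write $h'=\sum_{|K|\le a} c_K\, y^K$ and $f'=\sum_U b_U\, Y^{[U]}$, where $K,U$ range over multi-indices of length $n$. Since $i\ge a\ge |K|$ for every $K$ with $c_K\ne 0$, the exponent $i-|K|$ is non-negative and the homogenization becomes $h'[i]=\sum_K c_K\, x^K z^{i-|K|}$. Similarly, by Definition~\ref{defhomcoordp}(ii), $f'[i+w]=\sum_{|U|\le i+w} b_U\, X^{[U]} Z^{[i+w-|U|]}$, where the truncation $|U|\le i+w$ reflects our convention that $X^{[V]}=0$ whenever some component of $V$ is negative.

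Next I would compute the action term-by-term. By the contraction rule (Definition \ref{cont}(i)) we have
\begin{equation*}
(x^K z^{i-|K|})\circ (X^{[U]} Z^{[i+w-|U|]}) = X^{[U-K]} Z^{[w-|U-K|]},
\end{equation*}
which is nonzero precisely when $U\ge K$ componentwise and $|U-K|\le w$. Summing over $K,U$ gives
\begin{equation*}
h'[i]\circ f'[i+w] = \sum_{K,U:\, U\ge K,\ |U-K|\le w} c_K b_U\, X^{[U-K]} Z^{[w-|U-K|]}.
\end{equation*}
On the other hand, $h'\circ f' = \sum_{K,U:\, U\ge K} c_K b_U\, Y^{[U-K]}$, and applying $\Homog(-, Z, w)$ keeps exactly those contributions with $|U-K|\le w$ and replaces $Y^{[U-K]}$ by $X^{[U-K]}Z^{[w-|U-K|]}$; this is literally the same sum. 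The hypothesis $i\ge a$ is used to ensure that no term of $h'$ is lost in homogenizing and that no spurious term of $f'[i+w]$ contributes (any putative term with $|U|>i+w$ would need $|K|>i\ge a$, hence $c_K=0$). This proves \eqref{ehomogann}.

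For the corollaries, observe that $(h'\circ f')[w]$ is built from the graded components of $h'\circ f'$ of degrees $\le w$ only, so $(h'\circ f')[w]=0$ is equivalent to $(h'\circ f')_{\le w}=0$; combined with \eqref{ehomogann} this gives the chain of equivalences in \eqref{2.11}, while the one-way implication $h'\circ f'=0 \Rightarrow (h'\circ f')_{\le w}=0$ is trivial. For \eqref{vanish}, if $f'$ has degree $b$ then $h'\circ f'$ has degree at most $b$, so $(h'\circ f')_{\le b}=0$ already forces $h'\circ f'=0$, upgrading the implication to an equivalence.

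The only delicate point, and hence the main obstacle, is the boundary bookkeeping: one must confirm that the terms truncated by the conventions $X^{[V]}=0$ (for negative components of $V$) on the left match exactly the terms discarded by the degree cutoff $|U-K|\le w$ in forming $(h'\circ f')[w]$ on the right. This is precisely where the assumption $i\ge a$ plays its role, and once it is handled the rest of the argument is formal.
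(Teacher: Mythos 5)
Your proposal is correct and follows essentially the same route as the paper: a direct expansion (the paper uses homogeneous components $h_u$, $f_v$, you use individual monomials), a termwise contraction computation with the convention $Z^{[c]}=0$ for $c<0$, and the observation that terms of $f'$ of degree exceeding $i+w$ cannot contribute because the matching piece of $h'$ would need degree greater than $i\ge a$. The deductions of \eqref{2.11} and \eqref{vanish} from \eqref{ehomogann} are also the same as in the paper.
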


\begin{proof}
Let $h'=\sum_{u=0}^a h_u$ and $f'=\sum_{v=0}^b f_v$. Then $h=h'[i]=\sum_{u=0}^a
h_u z^{i-u}$ and
$f'[i+w]=\sum_{v=0}^{\min\{b,i+w\}} f_v Z^{[i+w-v]}$. Now, we have formally (below, $Z^{[c]}=0$ if
$c<0$),

\begin{align}
 h'[i]\circ f'[i+w]&=\sum_{u=0}^a \left(
\sum_{v=u}^{\min\{b,u+w\}}h_u(x_1,\ldots ,x_n) \circ f_{v}(X_1,\ldots ,X_n)\cdot Z^{[w+u-v]}\right)
\notag \\ &= \sum_{u=0}^a\left( \sum_{v=u}^b
h_u\circ f_{v}\cdot  Z^{[w+u-v]}\right)= h'\circ f' [w] . \notag
\end{align}
\noindent
The second equation is immediate from the first, since
homogenization to degree $w$ in $\Gamma '$ annihilates terms in $h\circ f$
having degree greater than $w$. The third is immediate from the second.

\end{proof}

\begin{lemma}\label{basicann} Suppose $h'\in R'$ has degree no greater than $a$, and
$f'\in \Gamma '_{\le b}$, and let $h=\Homog(h',z,a)\in R$, $f=\Homog(f',Z,b)\in \Gamma $. Then
\begin{equation} h'\circ f' = 0 \Leftrightarrow h\circ (f\cdot_{rp} Z^{[a]})=0.
\end{equation}
\end{lemma}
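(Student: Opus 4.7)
The plan is to reduce Lemma \ref{basicann} directly to the previously established Lemma \ref{homogann}. The key observation is that the operation $\cdot_{rp} Z^{[a]}$ applied to $f = \Homog(f',Z,b)$ simply shifts the $Z$-exponent in each term by $a$, so it amounts to re-homogenizing $f'$ to a higher target degree.

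First I would spell out this identification. Writing $f'=\sum_{v=0}^{b} f_v'$ with $f_v'\in\Gamma'_v$, we have
\begin{equation*}
f=\Homog(f',Z,b)=\sum_{v=0}^{b} f_v'(X_1,\ldots,X_n)\cdot Z^{[b-v]}.
\end{equation*}
Applying $\cdot_{rp}Z^{[a]}$ raises each $Z$-exponent by $a$ without altering coefficients, so
\begin{equation*}
f\cdot_{rp}Z^{[a]}=\sum_{v=0}^{b} f_v'\cdot Z^{[a+b-v]}=\Homog(f',Z,a+b)=f'[a+b].
\end{equation*}

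Next I would invoke Lemma \ref{homogann} with $i=a$ and $w=b$, so that $i+w=a+b$. Since $h=h'[a]$ by hypothesis, that lemma yields
\begin{equation*}
h\circ (f\cdot_{rp}Z^{[a]})=h'[a]\circ f'[a+b]=(h'\circ f')[b].
\end{equation*}
Finally, because $f'\in\Gamma'_{\le b}$, the element $h'\circ f'$ lies in $\Gamma'_{\le b}$ as well, so its truncation $(h'\circ f')_{\le b}$ coincides with $h'\circ f'$ itself, and the homogenization-to-degree-$b$ map is injective on it. In particular $(h'\circ f')[b]=0$ if and only if $h'\circ f'=0$, which is exactly the equivalence \eqref{vanish} of Lemma \ref{homogann}. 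Chaining the two equivalences gives the stated biconditional.

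The only mildly delicate step is the identification $f\cdot_{rp}Z^{[a]}=\Homog(f',Z,a+b)$: it requires unpacking Definition \ref{defhomcoordp}(i) carefully to see that $\cdot_{rp}Z^{[a]}$ commutes with the homogenization process (equivalently, that it shifts the homogenization degree by $a$). Once that identification is in hand, the statement is a one-line consequence of Lemma \ref{homogann}, with no new computation required.
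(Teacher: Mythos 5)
Your proposal is correct and follows essentially the same route as the paper: the paper's proof is exactly to take $i=a$ in equation \eqref{vanish} of Lemma \ref{homogann} and note the identity $\Homog(f',Z,a+b)=\Homog(f',Z,b)\cdot_{rp}Z^{[a]}$, which is the identification you verify in detail. Your extra care about $f'$ having degree at most (rather than exactly) $b$ is a harmless elaboration of the same argument.
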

\begin{proof} In \eqref{vanish}, take $i=a$, and note that
$\Homog(f',Z,a+b)=\Homog(f',Z,b)\cdot_{rp} Z^{[a]}$.
\end{proof}
\begin{lemma}\label{homoginv}{\sc local to global inverse systems.} Suppose that $p=p_0=(0:\ldots
:0:1)$ in $\mathbb P^n$, and that $L'\subset \Gamma '$ is the
inverse system of $\mathcal I_p\subset \mathcal O_p$, where
$\mathcal I_p$ defines a degree-$s$ zero-dimensional scheme $\Z$
concentrated at $p$. Then $\Homog(L',Z)\subset \Gamma $ is the
inverse system $L_\Z$ of $I_\Z\subset R$.
\end{lemma}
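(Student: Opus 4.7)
My plan is to invoke Macaulay's correspondence (Lemma~\ref{sat}): I will show that $\Homog(L',Z)$ is a graded $R$-submodule of $\Gamma$ whose annihilator in $R$ equals $I_{\Z}$, which then gives $\Homog(L',Z)=I_{\Z}^{-1}=L_{\Z}$. Let $J\subset R'$ be the $M'$-primary ideal with $L'=J^{-1}$; dehomogenization via $z\mapsto 1$ sets up a bijection between homogeneous elements of $I_{\Z}$ and elements of $J$ (for $h\in R_i$ homogeneous, $h\in I_{\Z}$ iff $\tilde h := h|_{z=1}\in J$). The central tool will be an equivariance property of the degree-$j$ homogenization $\phi_j\colon L'\to\Gamma_j$, $\phi_j(f')=\Homog(f',Z,j)$, whose image is exactly $\Homog(L',Z)_j$. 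Any $h\in R_i$ can be written as $h=\Homog(\tilde h,z,i)$ with $\deg\tilde h\le i$, so a direct application of Lemma~\ref{homogann} yields
\begin{equation*}
h\circ \phi_j(f')\;=\;\phi_{j-i}(\tilde h\circ f')\qquad\text{for all }f'\in\Gamma',\ j\ge i.
\end{equation*}

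The rest of the proof consists of three observations, all immediate from this identity. First, $\Homog(L',Z)$ is $R$-stable: if $f'\in L'$ then $\tilde h\circ f'\in L'$ since $L'$ is $R'$-stable, so $h\circ\phi_j(f')\in\phi_{j-i}(L')=\Homog(L',Z)_{j-i}$. Second, $I_{\Z}\subset\Ann(\Homog(L',Z))$: for homogeneous $h\in I_{\Z}$ one has $\tilde h\in J$, hence $\tilde h\circ f'=0$ and thus $h\circ\phi_j(f')=0$. Third, $\Ann(\Homog(L',Z))\subset I_{\Z}$: if $h\in R_i$ annihilates $\Homog(L',Z)$, then $\phi_{j-i}(\tilde h\circ f')=0$ for every $f'\in L'$ and every $j\ge i$. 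Directly from the definition, $\phi_k$ is injective on $\Gamma'_{\le k}$ (for $g\in\Gamma'_{\le k}$, the expansion $\Homog(g,Z,k)=\sum_{\ell\le k}g_\ell Z^{[k-\ell]}$ vanishes only when $g=0$), and $\tilde h\circ f'$ lies in $L'\subset\Gamma'_{\le\alpha(\Z)}$; choosing $j\ge i+\alpha(\Z)$ therefore forces $\tilde h\circ f'=0$ for all $f'\in L'$, so $\tilde h\in J$ and $h=\Homog(\tilde h,z,i)\in I_{\Z}$.

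Once these three observations are in place, Lemma~\ref{sat} identifies the graded $R$-submodule $\Homog(L',Z)\subset\Gamma$ with $I_{\Z}^{-1}=L_{\Z}$. The only real technical point is the equivariance identity itself, which is a clean repackaging of Lemma~\ref{homogann} via the decomposition $h=\Homog(h|_{z=1},z,\deg h)$; the bijection between $I_{\Z}$ and $J$ under dehomogenization and the high-degree injectivity of $\phi_k$ on $L'$ are both mechanical. I do not anticipate any significant obstacle beyond bookkeeping in keeping track of the $Z$-degrees during the application of Lemma~\ref{homogann}.
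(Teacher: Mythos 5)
Your proof is correct, and it takes a genuinely different route from the paper for the harder half of the argument. The paper's proof shares your first two steps in substance: it uses Lemma \ref{homogann} (equation \eqref{2.11}) to get $I_\Z\circ S=0$, i.e.\ $S=\Homog(L',Z)\subset L_\Z$, and it checks $R$-stability of $S$ by computing $z\circ f$ and $x_i\circ f$ for $f=\Homog(f',Z,u)$ --- which is exactly the degree-one case of your equivariance identity $h\circ\phi_j(f')=\phi_{j-i}(\tilde h\circ f')$. The divergence is in the reverse inclusion. The paper proves $L_\Z\subset S$ by a dimension count: for $i\ge\max\{\alpha(\Z),\tau(\Z)\}$ both $S_i$ and $(L_\Z)_i$ have dimension $s$ (injectivity of $f'\mapsto f'[i]$ on $\Gamma'_{\le i}$, plus Theorem \ref{zero-dim}), so they agree in high degrees, and then the saturation of $I_\Z$ via Lemma \ref{sat} ($(L_\Z)_u=R_{i-u}\circ (L_\Z)_i\subset R\circ S\subset S$) propagates the equality downward. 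You instead compute $\Ann(S)$ exactly --- using the same injectivity of $\Homog(\cdot,Z,k)$ on $\Gamma'_{\le k}$, but applied with $k=j-i\ge\alpha(\Z)$ to force $\tilde h\circ f'=0$ --- and then invoke the biduality $\Ann(S)^{-1}=S$ of Lemma \ref{pairVS}(iii)/Lemma \ref{sat}. What each approach buys: the paper leans on the known Hilbert-function stabilization of $L_\Z$ and on saturation of the homogenized ideal, whereas you avoid both and lean on (a) the dehomogenization characterization $h\in I_\Z\Leftrightarrow h|_{z=1}\in J$ (which the paper's own first step also implicitly uses, so it is fair game, though your word ``bijection'' is loose --- $h$ and $zh$ have the same dehomogenization; only the stated equivalence is needed) and (b) the local duality $\Ann_{R'}(J^{-1})=J$ for the $M'$-primary ideal $J$, which is not spelled out verbatim in the paper but follows from Lemma \ref{pairVS}(iv),(v) by a colength comparison, so it is within the paper's toolkit. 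Your packaging of Lemma \ref{homogann} into a single equivariance identity that yields $R$-stability and both annihilator inclusions at once is a clean organizational improvement; the paper's route, in exchange, foreshadows the quantitative statements ($\dim_{\sf k}S_i=s$ for $i\ge\alpha(\Z)$) that it reuses in Lemma \ref{homoginvsys} and Proposition \ref{AuxMainL}.
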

\begin{proof} Let $S=\Homog(L',Z)$. It is
immediate from \eqref{2.11} in Lemma
\ref{homogann} that
$I_\Z\circ S=0,$ so
$L_\Z\supset S$. Also, note that $S$ is an $R$-module:  $R\circ S\subset S$. To show this, it
suffices to check that if
$f\in S_u, h\in R_1$, then $h\circ f\in S_{u-1}$. Let $f=\Homog(f',Z,u), f' \in L'$. Note that
$z\circ f=\Homog(f',Z,u-1)$, so is in $S$. Also, if $1\le i\le n$ then considering each term, it
is easy to see that $\ x_i\circ f =
\Homog(y_i\circ f',Z,u-1)$, so is in $S$. This shows $R_1\circ S\subset S$, and by induction that
$S$ is an $R$-module. \par
For $i\ge \tau (\Z)$, $\dim_{\sfÊk} (L_\Z)_i=s$. For $i\ge \alpha (\Z)$, the socle degree, $\dim_{\sfÊk} S_i=s$, since
the homomorphism $f\in L'\rightarrow f[i]$ is an isomorphism of $\Gamma '_{\le i}$ into $\Gamma _i$,
and $\dim_{\sfÊk} L'=s$. Since $S\subset L_\Z$, we have $S_i=(L_\Z)_i$ for $i\ge \max\{
\alpha(\Z),\tau(\Z)\}$. Since $I_\Z$ is saturated, by Lemma \ref{sat} we have
that there is an integer $N$ such that $(L_\Z)_u=R_{i-u}\circ (L_\Z)_i$ for all
$i\ge N$ and $u\le i$. We conclude that $L_\Z\subset S$, completing the proof of the Lemma.
\end{proof} \par \noindent
The following result is a consequence of Lemma
\ref{homoginv}. We give a direct proof.
\begin{lemma}\label{closerp} When $j\ge \alpha (\Z)$, $(L_\Z)_{\ge j}$ is closed under the raised power
action of
$Z$:
$f\rightarrow f\cdot_{rp}Z^{[u]}$.
\end{lemma}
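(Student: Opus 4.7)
The plan is to combine Lemma \ref{homoginv} with the definition of $\Homog$ and to use the hypothesis $j \ge \alpha(\Z)$ to ensure no homogenization truncation occurs.

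First I would recall that by Lemma \ref{homoginv}, $L_\Z = \Homog(L',Z) = \bigoplus_{u\ge 0} L'[u]$, where $L'[u] = \langle \Homog(f',Z,u) \mid f' \in L'\rangle \subset \Gamma_u$. Since $\Homog(\cdot,Z,u)\colon \Gamma'_{\le u} \to \Gamma_u$ is ${{\sf k}}$-linear in its first argument, every element $f \in (L_\Z)_j = L'[j]$ can be written as $f = \Homog(g',Z,j)$ for a single $g' \in L'$ (just collect the linear combination inside the homogenization). Write $g' = \sum_{i=0}^{\alpha(\Z)} g_i'$ with $g_i' \in \Gamma'_i$; this truncation is valid because every element of $L'$ has degree $\le \alpha(\Z)$ by Definition~\ref{socled}.

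Second, I would use the hypothesis $j \ge \alpha(\Z)$ to get the explicit formula
\begin{equation*}
f = \Homog(g',Z,j) = \sum_{i=0}^{\alpha(\Z)} g_i'\cdot Z^{[j-i]},
\end{equation*}
where every index $i$ satisfies $j-i \ge 0$, so no term of $g'$ is discarded. This is the crucial role of the assumption: for $j < \alpha(\Z)$, the terms $g_i'$ with $i > j$ would be killed and could not be recovered by raising $Z$-powers.

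Third, I would apply Definition~\ref{defhomcoordp}(i): the operator $\cdot_{rp} Z^{[u]}$ raises the $Z$-exponent in each term by $u$ without altering the coefficients, so
\begin{equation*}
f\cdot_{rp} Z^{[u]} = \sum_{i=0}^{\alpha(\Z)} g_i' \cdot Z^{[j+u-i]} = \Homog(g',Z,j+u),
\end{equation*}
which lies in $L'[j+u] = (L_\Z)_{j+u}$, as desired. I do not anticipate a real obstacle; the only delicate point is that the linearity of $\Homog(\cdot,Z,j)$ lets one pass from the spanning description of $L'[j]$ to the statement that every homogeneous element in degree $j$ is itself the homogenization of a single $g' \in L'$, and the hypothesis $j \ge \alpha(\Z)$ is precisely what guarantees that homogenization followed by raising $Z$-powers agrees with homogenization to the higher degree.
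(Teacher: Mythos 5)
Your proof is correct, and it takes a different route from the one the authors actually write out. The paper remarks that the lemma ``is a consequence of Lemma \ref{homoginv}'' but then gives a direct proof on the ideal side: for $f\in (L_\Z)_j$ and $h\in (I_\Z)_{j+1}$ with $h\circ(f\cdot_{rp}Z)\neq 0$, they split $h=zh_1+h'$ with $h'\in R'_{j+1}$, observe that $h'\in I_\Z$ because $j+1>\alpha(\Z)$ (so $h'$ kills anything with a $Z$-factor and lies in $m_{p_0}^{\alpha+1}$), use that the homogenizing variable $z$ is a non-zero divisor on $R/I_\Z$ to conclude $h_1\in (I_\Z)_j$, and reach a contradiction from $h_1\circ f=(zh_1)\circ(f\cdot_{rp}Z)\neq 0$; the case $u=1$ then iterates. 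You instead carry out the derivation from Lemma \ref{homoginv} that the authors only allude to: since $\Homog(\cdot,Z,j)$ is $\sf k$-linear and $L'$ is a vector space, every $f\in(L_\Z)_j=L'[j]$ is $\Homog(g',Z,j)$ for a single $g'\in L'$, and because $\deg g'\le\alpha(\Z)\le j$ no homogeneous component of $g'$ is truncated, so $f\cdot_{rp}Z^{[u]}=\Homog(g',Z,j+u)\in L'[j+u]=(L_\Z)_{j+u}$. This is legitimate and non-circular, since the proof of Lemma \ref{homoginv} does not use the present lemma; what your argument buys is a transparent explanation of exactly where $j\ge\alpha(\Z)$ enters (no truncation), matching the failure in Example \ref{noncloserp}, while the paper's direct argument buys independence from the full equality $\Homog(L',Z)=L_\Z$, resting only on saturation of $I_\Z$ and the containment $m_{p_0}^{\alpha+1}\subset I_\Z$, and it also handles general $u$ at once after iterating the $u=1$ step.
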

\begin{proof} Let $f\in (L_\Z)_j$, set $f_1=f\cdot_{rp}Z, I=I_\Z$, and suppose by way of
contradiction that $h\in I_{j+1}$ satisfies $h\circ f_1\not= 0$.
Then $h=zh_1+h', h_1\in R_j,h'\in R'_{j+1}$. Since $j\ge \alpha
(\Z)$, $h'\in J_{j+1}$, where $J$ is the ideal defining $\Z\subset
\mathbb A^n$; hence $zh_1\in I_{j+1}$, implying $h_1\in I_j$, since
the homogenizing variable is a non-zero divisor of $R/{I_\Z}$. But
we have $h'\circ f_1=0$ (as each term of $f_1$ has a $Z$-factor),
hence $zh_1\circ f_1=(h\circ f_1 -h'\circ f_1) \not= 0$. Then
$h_1\circ f= zh_1\circ f_1\not=0$,  a contradiction since $h_1\in
I_j$.
\end{proof} \par \noindent
The assumption $j\ge \alpha (\Z)$ in the above Lemma is necessary (see Example \ref{noncloserp}). We now
state a key result concerning the generation of the homogenized inverse system.
\begin{lemma}\label{homoginvsys} {\sc Generators for the global inverse system.} Suppose that
$V'\subset
\Gamma '_{\le
\alpha}$ generates the inverse system $L'$ of $\mathcal I_p$, and denote by $I_\Z$ the homogenization of
$\mathcal I_p$, and by $V$ the subspace $\Homog (V',Z,\alpha)$ of $\Gamma _\alpha$. Then the
inverse system
$L_\Z={I_\Z}^{-1} \subset \Gamma $ satisfies
\begin{align}\label{isgen} (L_\Z)_j &= \Homog (L'_{\le \alpha},Z,j) \notag\\
          &= R_\alpha \circ(V\cdot_{rp} Z^{[j]}).
\end{align}
\end{lemma}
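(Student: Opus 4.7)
\medskip

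The plan is to establish the two equalities separately, the first being essentially immediate from Lemma~\ref{homoginv} and the second reducing to a careful application of the homogenization--duality identity of Lemma~\ref{homogann}.

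For the first equality, I would observe that since $\mathcal I_p \supset m_p^{\alpha+1}$ where $\alpha = \alpha(\Z)$, the local inverse system satisfies $L' \subset \Gamma'_{\le \alpha}$, so $L'_{\le \alpha} = L'$. Then Lemma~\ref{homoginv} gives $L_\Z = \Homog(L',Z)$, and taking the degree-$j$ component yields $(L_\Z)_j = \Homog(L'_{\le \alpha},Z,j)$.

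For the second equality, the inclusion $R_\alpha \circ (V \cdot_{rp} Z^{[j]}) \subseteq (L_\Z)_j$ is easy: each $v \in V$ is $\Homog(v',Z,\alpha)$ for some $v' \in V' \subseteq L'$, and $v \cdot_{rp} Z^{[j]} = \Homog(v',Z,\alpha+j)$ lies in $L_\Z$ by Lemma~\ref{homoginv}; since $L_\Z$ is an $R$-module, any $h \in R_\alpha$ acting on this element stays in $L_\Z$, and a degree count places it in $(L_\Z)_j$.

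The reverse inclusion $(L_\Z)_j \subseteq R_\alpha \circ (V \cdot_{rp} Z^{[j]})$ is the crux. An arbitrary element of $(L_\Z)_j = \Homog(L'_{\le \alpha},Z,j)$ has the form $\Homog(f',Z,j)$ for some $f' \in L'$. Since $V'$ generates $L'$ as an $R'$-module, by bilinearity it suffices to treat $f' = h' \circ v'$ with $h' \in R'$ and $v' \in V'$. If $h' \circ v' = 0$ there is nothing to prove, so I may assume $\deg h' \le \deg v' \le \alpha$. Applying Lemma~\ref{homogann} with $i = \alpha$ (valid since $\alpha \ge \deg h'$) and $w = j$ gives
\begin{equation*}
\Homog(h' \circ v', Z, j) \;=\; h'[\alpha] \circ v'[\alpha + j] \;=\; h'[\alpha] \circ (v \cdot_{rp} Z^{[j]}),
\end{equation*}
where $v = \Homog(v',Z,\alpha) \in V$ and $h'[\alpha] = \Homog(h',z,\alpha) \in R_\alpha$. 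This displays the element as a member of $R_\alpha \circ (V \cdot_{rp} Z^{[j]})$, completing the proof.

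The main obstacle is the bookkeeping in the last step: one has to verify the hypothesis $i \ge \deg h'$ needed to invoke Lemma~\ref{homogann}, which rests on the nonvanishing observation $\deg h' \le \deg v' \le \alpha$. Once this is pinned down, the identification $v'[\alpha + j] = v \cdot_{rp} Z^{[j]}$ --- essentially the compatibility between homogenization and the raised-power operation --- converts the local generation statement for $L'$ into a global generation statement for $L_\Z$ in each fixed degree $j$.
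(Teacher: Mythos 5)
Your proof is correct and follows essentially the same route as the paper's: the first equality from Lemma \ref{homoginv} (using $L'=L'_{\le\alpha}$), the reverse inclusion from the $R$-module property of $L_\Z$ together with $v\cdot_{rp}Z^{[j]}=\Homog(v',Z,\alpha+j)$ (the paper cites Lemma \ref{closerp} here), and the forward inclusion via Lemma \ref{homogann} with $i=\alpha$, $w=j$. One small repair: for non-homogeneous $h'$, nonvanishing of $h'\circ v'$ does not force $\deg h'\le \deg v'$, so you should instead reduce by linearity to homogeneous $h'$, or truncate $h'$ to degrees $\le\alpha$ (which leaves $h'\circ v'$ unchanged since $v'\in\Gamma'_{\le\alpha}$) --- this is what the paper does by writing $L'=R'_{\le\alpha}\circ V'$.
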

\begin{proof} Since $L'=L'_{\le \alpha}$, the first equality follows from Lemma \ref{homoginv}.
That $V'$ generates $L'$ is equivalent to $L'=R'_{\le \alpha }\circ
V'$. If $h'\in R'_{\le \alpha }$ and $v'\in V'$, let $v=v'[\alpha]$;
then by Lemma \ref{homogann} $h'[\alpha ]\circ (v\cdot_{rp}Z^{[j])}=
h'[\alpha ]\circ v'[j+\alpha ]=(h'\circ v')[j]\in \Homog(L',Z,j)$.
This shows $\Homog(L',Z,j)\subset R_\alpha \circ(V\cdot_{rp}
Z^{[j]})$.  Lemmas \ref{closerp} and \ref{homoginv} show that
$V\cdot_{rp} Z^{[j]}\subset
\Homog(L',Z,j+\alpha)=(L_\Z)_{j+\alpha}$, implying the opposite
inclusion. This completes the proof of \eqref{isgen}.
\end{proof}
\begin{example}\label{findhomogid} The above Lemma \ref{homoginvsys} can be used to calculate the
homogenization of an ideal, given generators of the local inverse system. Begin with the local
ideal $I'\subset R'={{\sf k}}[y_1,y_2]$,
$I'=\Ann(f'), f'=Y_1^{[8]}+Y_2^{[8]}+Y_1^{[3]}Y_2^{[3]}+(Y_1+Y_2)^{[6]}$. Then
\begin{equation*}
I'=(3y_1^6-4y_1y_2^5+y_2^6+y_1^2y_2^2-2y_1y_2^3, y_1^6-y_2^6+y_1^3y_2-y_1y_2^3,m_{p_0}^9),
\end{equation*}
 of local Hilbert function
$H'=H(R'/I') = (1,2,3,4,3,2,2,2,1)$, and $I'$ defines a degree-20
zero-dimensional scheme $\Z=\Spec(R'/I')$ concentrated at $p_0=(0:0:1)\in
\mathbb P^2$, with $\alpha (\Z)=8$. The homogenized ideal
$I=I_{\Z}\subset R$ has more than two generators, and is tricky to
find directly --- we may homogenize a standard basis, using a computer algebra program. However, by homogenizing $f'$, forming
$f=\Homog(f',Z,8)=X_1^{[8]}+X_2^{[8]}+X_1^{[3]}X_2^{[3]}Z^{[2]}+(Y_1+Y_2)^{[6]}Z^{[2]}$,
we may calculate $W_8=R_8\circ (f\cdot_{rp}Z^{[8]})$, and we can
find $J=\Ann W_8$. In the {\sc{Macaulay}} algebra program \cite{BSE}
we found the contraction of $R_8$ with $f$, then used the script
``$<$l$\_ \text{from}\_ \text{dual}$'' to find $J$. The
homogenized ideal $I=J_{\le 8}+m_p^9$, where $m_p=(x_1,x_2)$. In
this case $J_{\le 8}$ already generates $I$, since $\Delta
(H(R/J_{\le 8}))$ has the correct degree 20. We found $I=I_\Z$ satisfies
\begin{multline*}
I=(x_1^3x_2^2+x_1^2x_2^3-3x_1x_2^4, \ x_1^4x_2-x_1x_2^4,\
x_1x_2^5-x_2^6+(3/4)x_1^3x_2z^2-(1/4)x_1^2x_2^2z^2-(1/4)x_1x_2^3z^2,\\
x_1^6-x_2^6+x_1^3x_2z^2-x_1x_2^3z^2),\qquad
\end{multline*}
of Hilbert function $H_\Z=H(R/I)$ satisfying $\Delta H_{\Z}=(1,2,3,4,5,4,1)$, and $\sigma (\Z)=7$.
\end{example}
The following Proposition extends some of the above
results. Recall that we use the notation $z$ for $x_{n+1}$ and $Z$
for $X_{n+1}$ and we denote by $m_z$ or $m_{z^a}$ multiplication by
$x_{n+1}$ or by $x_{n+1}^a$ in $R$ or in $A=R/I$.  We denote by
$L=\Homog (J^{-1},Z)\subset \Gamma$, the homogenization of the
inverse system $J^{-1}\subset \Gamma '={{\sf k}}_{DP}[Y_1,\ldots ,Y_n]$ and
we let $L_i[j]= \Homog({L_i}_{\{X_{n+1}=1\}},X_{n+1},j)$. This is
just $L_i[j]=z^{i-j}\circ L_i\in \Gamma _j$ if $j\le i$, and
$L_i[j]=Z^{[j-i]}\cdot_{rp} L_i$ if $j\ge i$ and is obtained in any case by
changing each $X_{n+1}^{[u]}$ factor appearing in a monomial term of
an element $F\in L_i$ to $X_{n+1}^{[u+j-i]}$, forming an element
$F[j] \in \Gamma _j$. Note that if $j>i$, and $F\in L_i$, then
$F[j]$ is not necessarily in $L_j$ (see Remark \ref{strange} and
Example \ref{easyhomog} below). Recall that $M'$ is the maximal
ideal of $R'={{\sf k}}[y_1,\ldots ,y_n]$ at the origin, and $m_p=(x_1,\ldots
,x_n)\subset R$ is the homogeneous maximal ideal of $R$ at the
corresponding point $p\in \mathbb P^n$. 
\begin{proposition}\label{AuxMainL}{\sc Homogenization for schemes with support $p_0$.}
Let $J\subset R'$ define a zero-dimensional scheme
$\Z=\Spec(R'/J)$ concentrated at the origin and let
$\alpha=\alpha(\Z)$ be the socle degree of $R'/J$ (Definition
\ref{type}). Let $L'=J^{-1}\subset \Gamma '$ be the affine
inverse system of $\Z$, and set $I = \Homog(J,z)\subset R$,
$L=\Homog(L',Z)=\bigoplus _i L'[i]$. Then we have
\begin{enumerate}[\rm(i)]
\item\label{is1} $I=I_\Z$, and is a saturated ideal primary to the maximal ideal ${m_p}, p=(0:
\ldots :0: 1) \in \mathbb P^n$, and it satisfies $m_p\supset
I\supset {m_p}^{\alpha +1}$. In particular, $I_a=(I_b:R_{b-a)}$ for
$a\le b$, and $I_b=R_{b-a}I_a$ for $b\ge a\ge \alpha (\Z)$.
Furthermore, if $a\le b$, then $I_a=(I_b:z^{b-a)}$, and if $b\ge
\alpha$ we have $I_b=z^{b-\alpha}\cdot I_\alpha +({m_p}^{\alpha
+1})_b$.
\item\label{is3}  $L=L_\Z$ and satisfies $L_{a} = z^{b-a}\circ L_b$ for $a\le b$, and
${{\sf k}}_{DP}[Z]\subset L\subset
\Gamma_{\le
\alpha}\cdot {{\sf k}}_{DP}[Z]$. Furthermore, for $\alpha \le a \le b $ the map $F\rightarrow F[b]$ taking
$L_a$ to
$L_{b}$, and the map $z^{b-a}\circ :L_{b}\rightarrow L_a$ are
inverse isomorphisms.  Also, $L$ satisfies $L_b=L_a[b]$ for any pair $(a,b)$ satisfying $a\ge
\alpha$.
\item\label{is4}  $A$ satisfies $m_{z^{b-a}}: A_a \rightarrow A_{b}$ is injective for $a\le b$, and
furthermore $m_{z^{b-a}}$ defines an isomorphism $A_a\cong A_{b}$ for
$\alpha \le a \le b $. In
particular, for $k>0$, $m_{z^k}: A_{\alpha -k} \rightarrow
A_\alpha $ is an injection, and $m_{z^k}:A_\alpha  \rightarrow A_{\alpha +k}$ is an
isomorphism onto.
\item\label{is5} Let $\dim_{\sfÊk}R'/J=s$. The subscheme $\Z=\Proj (A)$ of $\mathbb P^n$ has degree
$s$, and $J\supset {M'}^s$. Furthermore, the regularity $\sigma
(\Z)$ satisfies
$\sigma (\Z)=\tau (\Z)+1\le
\alpha +1\le s$.
\item\label{is6} Let $
\mathfrak{L}\subset \Gamma$ be an inverse system satisfying ${{\sf k}}_{DP}[Z]\subset \mathfrak{L}\subset
\Gamma_{\le
\alpha}\cdot {{\sf k}}_{DP}[Z]$, and let $\mathfrak{I}=\Ann(\mathfrak{L})$.  Then, letting
$\mathfrak{J}=(\mathfrak{I})_{(z=1)}\subset R'$, and $\mathfrak{L}'=(\mathfrak{L}_b)_{Z=1}, b\ge \alpha$ we have that $\mathfrak{L}'=(\mathfrak{J})^{-1}$, $\mathfrak{J}$ defines a scheme
$\Z$ concentrated at the origin with $\mathfrak{I}=I_\Z, \mathfrak{L}=L_\Z$, $\alpha(\Z)\le
\alpha$, and
$\mathfrak{L}=
\Homog(\mathfrak{L}',Z)$.
\end{enumerate}
\end{proposition}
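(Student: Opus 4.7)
I would prove the six parts in logical order rather than numerical order. All are consequences of the foundational Lemma \ref{homoginv} (which already identifies $\Homog(L',Z)$ with $L_\Z$), Lemma \ref{closerp}, and Lemma \ref{homoginvsys}, combined with Macaulay duality (Lemma \ref{pairVS}), the saturation criterion (Lemma \ref{sat}), and Theorem \ref{zero-dim}. The natural plan is to establish (iii) first, derive (ii), (iv), (v) from it, and conclude with (i) and (vi).

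For (iii), Lemma \ref{homoginv} gives $L = L_\Z$; the inclusion $L\subset \Gamma_{\le \alpha}\cdot{{\sf k}}_{DP}[Z]$ is immediate from the definition of $\Homog(L',Z)$ together with $L'\subset \Gamma'_{\le \alpha}$, and ${{\sf k}}_{DP}[Z]\subset L$ comes from $1\in L'$. For the inverse isomorphism when $\alpha\le a\le b$: dehomogenization $F\mapsto F|_{Z=1}$ injects $L_a$ into $L'$, and a dimension count ($\dim L_a = s = \dim L'$ by Lemma \ref{pairVS}) makes it bijective with inverse $f\mapsto f[a]$; composing these identifies the raised-power map $F\mapsto F[b]$ on $L_a$ with the inverse of $z^{b-a}\circ\colon L_b\to L_a$. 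The saturation relation $L_a = z^{b-a}\circ L_b$ for all $a\le b$ then follows from Lemma \ref{sat} equation \eqref{e1nzd}, since $z$ is a non-zero divisor for $R/I_\Z$ (the point $p_0$ lies in $\mathbb{A}^n$). Part (ii) is an immediate repackaging. For (iv), I dualize: under the Macaulay pairing, $m_{z^{b-a}}\colon A_a\to A_b$ is the transpose of $z^{b-a}\circ\colon L_b\to L_a$, so surjectivity of the latter yields injectivity of the former, and the isomorphism in the range $\alpha\le a\le b$ transfers. For (v), $s = \dim R'/J = \dim L'$ by Macaulay duality, and the isomorphism in (iii) gives $H_\Z(b) = s$ for $b\ge \alpha$, so $\tau(\Z)\le \alpha$ and $\sigma(\Z)\le \alpha+1$ by Theorem \ref{zero-dim}(ii); for $\alpha+1\le s$ I would use Nakayama: if $H(R'/J)_i = 0$ for some $i\le \alpha$, then $M'^i = J + M'^{i+1}$ forces $M'^i\subset J$, contradicting socle degree $\alpha$, so $s\ge \alpha+1$ and therefore $J\supset M'^{\alpha+1}\supset M'^s$.

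For (i), $I = \Homog(J,z)$ is saturated because the homogenizing variable $z$ is a non-zero divisor on $R/I$ by standard homogenization theory, and dehomogenization at $z = 1$ recovers $J$, so $I$ defines the projective closure of $\Z\subset \mathbb{A}^n$, namely $\Z$ itself; hence $I = I_\Z$. The containment $m_p\supset I\supset m_p^{\alpha+1}$ homogenizes from $M'\supset J\supset M'^{\alpha+1}$; the relations $I_a = (I_b:R_{b-a})$ and $I_a = (I_b:z^{b-a})$ are dual via Lemma \ref{pairVS} to the saturation relations on $L$ from (iii); the equation $I_b = R_{b-a}I_a$ for $b\ge a\ge \alpha$ uses $\sigma(\Z)\le \alpha+1$ from (v); and $I_b = z^{b-\alpha}I_\alpha + (m_p^{\alpha+1})_b$ follows by tracing the explicit homogenization formula applied to a generating set for $J$. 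For (vi), starting from $\mathfrak{L}$ satisfying the two inclusions, $\mathfrak{L}\subset \Gamma_{\le \alpha}\cdot{{\sf k}}_{DP}[Z]$ gives $m_p^{\alpha+1}\subset \mathfrak{I}$; combined with $\mathfrak{I}\subsetneq R$ this forces $\sqrt{\mathfrak{I}} = m_p$, so $\mathfrak{I}$ is $m_p$-primary and hence automatically saturated (no $M$-primary component is possible when $m_p\ne M$). Thus $\mathfrak{I} = I_\Z$ for a zero-dimensional $\Z$ concentrated at $p$ with $\alpha(\Z)\le \alpha$; applying (i)--(iii) to $J := \mathfrak{J} = \mathfrak{I}_{z=1}$ then yields $\mathfrak{L} = L_\Z = \Homog(J^{-1},Z)$ and $\mathfrak{L}' = (\mathfrak{L}_b)_{Z=1} = J^{-1}$ for $b\ge \alpha(\Z)$, completing (vi).

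The only substantive steps in this plan are setting up the two-sided inverse isomorphism in (iii) cleanly (so that (ii) and (iv) fall out by dualization) and extracting saturation from the bare inclusion bounds on $\mathfrak{L}$ in (vi); the key observation for the latter is the automatic saturation of $m_p$-primary ideals with $m_p\ne M$. Everything else is bookkeeping made tidy by the chosen ordering of the parts.
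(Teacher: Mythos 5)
Your treatment of parts (\ref{is1}), (\ref{is3}), (\ref{is4}) and (\ref{is5}) is essentially correct and runs parallel to the paper's own argument: $L=L_\Z$ from Lemmas \ref{homoginv}/\ref{homoginvsys}, the inverse isomorphisms via dehomogenization plus a dimension count (the paper instead invokes Lemma \ref{closerp} and ``an easy verification''), part (\ref{is4}) by dualizing the saturation statement, and $\tau(\Z)\le\alpha$ from the isomorphisms. Your Nakayama argument giving $H(R'/J)_i\ge 1$ for $i\le\alpha$, hence $s\ge\alpha+1$ and $M'^s\subset M'^{\alpha+1}\subset J$, is in fact a cleaner route to $J\supset M'^s$ than the paper's monomial-chain argument. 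Two small points: the proposition has five parts (labeled is1, is3--is6), so your ``(ii)'' is a phantom; and in (\ref{is1}) the appeal to $\sigma(\Z)\le\alpha+1$ only yields $I_b=R_{b-a}I_a$ for $a\ge\sigma(\Z)$, which does not cover $a=\alpha$ when $\sigma=\alpha+1$ (e.g.\ $J=(y_1^{\alpha+1})$, $I=(x_1^{\alpha+1})$, $I_\alpha=0$) --- the paper's one-line justification is no more precise there, so I only flag it.

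The genuine gap is in your argument for part (\ref{is6}). From $m_p\supset\mathfrak{I}\supset m_p^{\alpha+1}$ you conclude $\sqrt{\mathfrak{I}}=m_p$, ``so $\mathfrak{I}$ is $m_p$-primary and hence automatically saturated.'' Neither implication is valid: an ideal whose radical is prime need not be primary, and an ideal sandwiched between $m_p$ and $m_p^{\alpha+1}$ need not be saturated --- for instance $m_p^2+(x_1z^N)$ satisfies the sandwich but has $x_1\in(\mathfrak{I}:z^N)\setminus\mathfrak{I}$, the obstruction being exactly an embedded component at the irrelevant ideal, which is what saturation must exclude. Nor does the fact that $\mathfrak{I}=\Ann(\mathfrak{L})$ for a homogeneous $R$-submodule $\mathfrak{L}$ rescue the shortcut: one can write down inverse systems squeezed between ${{\sf k}}_{DP}[Z]$ and $\Gamma_{\le \alpha}\cdot {{\sf k}}_{DP}[Z]$ with $\mathfrak{L}_a\ne R_{b-a}\circ\mathfrak{L}_b$ (take $\mathfrak{L}_j=\langle Z^{[j]},X_1Z^{[j-1]}\rangle$ for $j\le 5$ and $\mathfrak{L}_j=\langle Z^{[j]}\rangle$ for $j>5$, with $\alpha=1$), and then by Lemma \ref{sat}, Equation \eqref{esat2}, the annihilator is not saturated. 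So the step you dismiss as automatic is precisely the substantive content of (\ref{is6}), and everything downstream of it in your outline (applying (\ref{is1})--(\ref{is3}) to $\mathfrak{J}$, the independence of $\mathfrak{L}'=(\mathfrak{L}_b)_{Z=1}$ from $b$, and $\mathfrak{L}'=\mathfrak{J}^{-1}$) is left unsupported. The paper takes a different route here: it dehomogenizes, sets $\mathfrak{J}'=\Ann(\mathfrak{L}')\subset R'$, shows $\mathfrak{J}\subset\mathfrak{J}'$ by using Lemma \ref{homogann} (Equation \eqref{vanish}) with $b=2\alpha$, and then forces $\mathfrak{J}=\mathfrak{J}'$ and $\mathfrak{L}'=\mathfrak{J}^{-1}$ by comparing the two colengths, both equal to $s$. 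You would need to reproduce an argument of that kind (or add saturation of $\mathfrak{L}$ as an explicit hypothesis) for your proof of (\ref{is6}) to close.
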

\proof That $I=I_\Z$ and is saturated is well-known, since the primary
decomposition of an ideal carries over to its
homogenization (see \S VII.5 Theorem 17 of \cite{ZS}). The next statements of \eqref{is1} are standard, since $z$ is
a non-zero divisor in $R/I_\Z$. That $J=(J_{\le \alpha})+(M')^{\alpha +1}$ implies the last statement of
\eqref{is1}.
\par
That  $L=L_\Z$, and ${{\sf k}}_{DP}[Z]\subset L\subset \Gamma_{\le
\alpha}\cdot {{\sf k}}_{DP}[Z]$ in \eqref{is3} follow from Lemma
\ref{homoginvsys} and \eqref{is1}. That $L_{a} = z^{b-a}\circ L_b$
follows from $z$ being a non-zero divisor in $R/I_\Z$, and Lemma
\ref{sat}.  Lemma \ref{closerp} and an easy verification implies
that the two maps given are inverse isomorphisms when $a,b\ge
\alpha$. For any $f\in J^{-1}$, $\deg\ f\le \alpha $, so if $a\ge
\alpha $ we have $f[b]=(f[a])[b]$ and this implies the last
statement of \eqref{is3}. The statements of (\ref{is4}) follow from and are weaker
than those of (\ref{is1}) or (\ref{is3}); they are about the
quotient algebra $A$, rather than the ideal $I$ or inverse system
$L$. Now (\ref{is3}) and (\ref{is4}) imply the key inequality $\tau
(\Z)\le \alpha (\Z)$ of (\ref{is5}) since $\tau(\Z)=\min\{i\mid
\dim_{\sfÊk}(R/I_\Z)_i=s\}$. That $J\supset {M'}^s$ is well known; any
monomial of ${M'}^s$ has a length-$(s+1)$ chain of monomials that
divide it, so some linear combination of elements of the chain must
be in $J$ --- since the degree of $\Z$ is only $s$ --- implying the
monomial itself is in $J$.
\par
Note that the main condition of  \eqref{is6} is that of
\eqref{singpt} with $L_p=Z$; this is the condition for
$\mathfrak{I}$ to define a zero-dimensional scheme $\Z$ concentrated
at $p_0$, so \eqref{is6} would follow from the standard fact,
$J=(I_\Z)_{z=1}$ defines the portion of $\Z$ in $\mathbb A^n: z=1$,
and \eqref{is3}, provided we show that $\mathfrak{L}'=J^{-1}$.
Directly, we have $1\subset \mathfrak{L}'\subset \Gamma '_{\le
\alpha}$; thus, identifying $x$ and $y$ variables (since we have
taken $z=1$) and letting $\mathfrak{J}'=\Ann(\mathfrak{L}')\subset
R'$, we have $(x_1,\ldots ,x_n)\supset \mathfrak{J}' \supset
(x_1,\ldots ,x_n)^{\alpha +1}$. Also, we have $\dim_{\sfÊk}
\mathfrak{L}'=\dim_{\sfÊk} \mathfrak{L}_b=H(R/I_\Z)_b=s$, with
$s=\deg(\Z)$, as there is no kernel in dehomogenizing from a vector
subspace of $\Gamma _b$. Clearly $\mathfrak{L}'$ is independent of
the choice of $b\ge \alpha$ by \eqref{is3}. Taking $b=2\alpha$ and
using equation \eqref{vanish} of Lemma \ref{homogann} we can see
that $\mathfrak{J}\subset \mathfrak{J}'(Y)$, but we have
$\dim_{\sfÊk}({{\sf k}}[x_1,\ldots ,x_n]/\mathfrak{J}')=\dim_{\sfÊk}(R/\mathfrak{J})=s$,
implying $\mathfrak{J}=\mathfrak{J}'$. It likewise follows from the
equality of dimensions that $\mathfrak{L}'=\mathfrak{J}^{-1}$. This
completes the proof of \eqref{is6}, and of Proposition
\ref{AuxMainL}.
\endproof
\begin{remark}\label{strange}{\it Homogenized component $L_i$ is not determined by $L'_i$.}
We note here a perhaps surprising property of the homogenized
inverse system $L=\Homog (J^{-1},Z)$, where $ J^{-1}\subset \Gamma
'$ is the inverse system of an ideal $J\subset R'$ defining a
zero-dimensional scheme $\Z$. Namely, $F\in L_i, i<\alpha (\Z)$,
does not imply that there is a (possibly nonhomogenous) element
$f\in J^{-1}$ of degree $i$ such that $F=f[i]$. There are also
elements of $L_i$ arising from homogenizing to degree $i$ those
elements of $J^{-1}$ having higher degree. See Example
\ref{easyhomog} below, where $X_1^{[2]}\in L_2, X_1^{[2]}=z\circ
(X_1^{[2]}Z-X_1X^{[2]})$, but is not a homogenization of an element
of $J^{-1}_{\le 2}$. Likewise, as mentioned earlier, $X_1^{[2]}\in
L_2$ does \emph{not} imply $\Homog(X_1^{[2]},Z,3)=X_1^{[2]}Z\in
L_3$; rather the corresponding element of $L_3$ is
$X_1^{[2]}Z-X_1X_2^{[2]}$. However, if $i\ge \alpha (\Z )$, then
$F\in L_i $ and if $j\ge i$, $ F[j] \in L_j$ by Proposition
\ref{AuxMainL}(\ref{is3}). For similar reasons, the condition $b\ge
\alpha$ in Proposition \ref{AuxMainL}(v) cannot be removed, and we
may have $((L_\Z)_a)_{Z=1}\not\subset ((L_\Z)_b)_{Z=1}$ when $a<b$.
(See Example \ref{easyhomog} below).
\par Note that $\sigma (\Z)$ may be rather less than $\alpha +1$,
the upper bound of (\ref{is5}), and is almost always less than
$\alpha +1$ when the defining ideal of $\Z$ in $R'$ is
non-homogeneous. (See Examples
\ref{findhomogid},\,\ref{regdeg},\,\ref{noncloserp}). \par If we
write $\alpha(\Z)=\sigma(\Z)+k(\Z)$, it is not clear how to bound above
$k(\Z)$. The examples where $\Z$ is defined locally by a
general enough compressed Gorenstein ideal of $\mathcal O_p$, in the
sequel article \cite{ChoI2} show that there is no constant upper
bound. On the other hand, these examples satisfy  $k(\Z)\le \sigma
(\Z)$, suggesting that the latter bound might be valid for $\Z$
supported at a single point. \par For any zero-dimensional scheme
$\Z$,  Proposition 1.13 shows that the inverse system $L$ of $I_\Z$
is determined by $L_{\sigma}$; thus $L_i=(L_{\sigma}:R_{i-\sigma})$
if $i\ge \sigma$, and $L_i=R_{\sigma -i}\circ L_\sigma$ if $i\le
\sigma$. However, when both $\alpha ,i> \sigma$, $L_i$ may not be
obtained by simply raising the $Z$-power of elements of  $L_\sigma$
even when $\Z$ is concentrated at $p_0$ (see Example
\ref{noncloserp}).
\end{remark}

Below we set $Z^{[u]}=0$ if $u<0$.
\begin{example}\label{regdeg} {\it $L_{\tau}$ may not determine $L$.} Let $R={{\sf k}}[x_1,x_2,x_3],p=p_0=(0:0:1),
\mathcal I_p = (y_1y_2,y_1^2-y_2^3), f'=(Y_1^{[2]}+Y_2^{[3]})$; then
$\mathcal I_p\supset (y_1y_2,y_1^3,y_2^4)$ and $H(R_{p}/\mathcal I_{p})=(1,2,1,1)$, and $\alpha (\Z)=3$.
The homogenization
$I_\Z=(x_1x_2,x_1^2z-x_2^3,x_1^3,x_2^4)$, and $H(R/I_\Z)=(1,3,5,5,\ldots )$, so $\tau (\Z )=2,
\sigma (\Z )=3$. The inverse system $L={I_\Z}^{-1}$ satisfies, by Lemma \ref{homoginvsys}
\begin{align*}
L_j= \,&\langle X_1^{[2]}Z^{[j-2]}+X_2^{[3]}Z^{[j-3]},X_2^2Z^{[j-2]},X_2Z^{[j-1]},X_1Z^{[j-1]},Z^{[j]}\rangle\\
=\,&R_3\circ \left( \Homog (f',Z,j+3)\right) = R_3\circ \left( X_1^{[2]}Z^{[j+1]}+X_2^{[3]}Z^{[j]}\right)\\
=\,&\Homog (V',Z,j),\text{ where } V'= R'\circ f'=\langle
f',Y_2^{[2]}Y_2,Y_1,1\rangle .
\end{align*}
Note that $L_\sigma = L_3$ determines $L$, but the space $L_\tau =
L_2$ does not. This corresponds to $(I_\Z )_\sigma$ determining
$I_\Z$ (see Theorem \ref{zero-dim}(ii)). Also, since $\Delta
H(R/I_\Z)=(1,2,2,0)$, which is not symmetric, $\Z$ is not
arithmetically Gorenstein; however, $\Z$ is locally Gorenstein and
has a single point of support.
\end{example}
\begin{example}\label{CayleyB} More generally, with $R,p$ as above, let $f' =(Y_1^{[2]}+Y_2^{[j]}),
j\ge 3$; then $\mathcal I_p = (y_1y_2,y_1^2-y_2^j)$ and
$H(R_p/\mathcal I_p)=(1,2,1,\dots ,1_j)$, with $j-1$ ones at the
end, determining a zero-dimensionl scheme $\Z$ at $p$ of degree
$j+2$, for which $\alpha (\Z)=j$. The homogenization
$I_\Z=(x_1x_2,x_1^2z^{j-2}-x_2^j,x_1^3)$, so $H=(1,3,5,6,\ldots
,j+2,j+2, \ldots )$, and $\Delta H(R/I_\Z)=(1,2,2,1,\ldots ,1,0 )$,
with $j-3$ ones at the end, so $\tau (\Z )=j-1, \sigma (\Z)=j$. The
inverse system $L=I^{-1}$ is determined by
\begin{equation*} L_{\sigma}=\langle X_1^{[2]}\cdot Z^{[j-2]}+X_2^{[j]},X_2^{[j-1]}Z,\ldots
,X_2Z^{[j-1]},X_1Z^{[j-1]},Z^{[j]}\rangle ,
\end{equation*}
in the same sense as Example \ref{regdeg}, but is not so determined by $L_{\tau}$. \par
 When $j=3$ or $j\ge 5$ then
$\Z$ is not arithmetically Gorenstein, since $\Delta H(R/I_\Z)$ is
not symmetric. When $j=4$ then $H_\Z=(1,3,5,6,6,\ldots ), \Delta
H=(1,2,2,1)$, and $\Z$ is arithmetically Gorenstein if and only if
it satisfies the Cayley-Bacharach property that $H(R/I_{\Z
'})_{\tau-1}=H(R/I_{\Z})_{\tau-1}$ for any subscheme $\Z' \subset
\Z$ of degree $s-1$ (see \cite{Kr},\,\cite[Theorem 4.1.10]{Mig}).
Here $\tau (\Z) =3,\sigma (\Z)=4$, and the local inverse system is
$L'=\langle
1,Y_1,Y_2,Y_2^{[2]},Y_2^{[3]},Y_1^{[2]}+Y_2^{[4]}\rangle$. The only
$R'$-closed system $L''$ of codimension one is $L''=\langle
1,Y_1,Y_2,Y_2^{[2]},Y_2^{[3]}\rangle$. Since $L''$, and thus
$J''=\Ann (L'') \subset R''$ defining the subscheme $\Z ''$ is
graded, Proposition \ref{aGor} implies that $H(R/I_{\Z
''})=(1,3,4,5,5,\ldots )$, the sum function of
$H(R'/J'')=(1,2,1,1)$. Thus $\Z$ does not satisfy the
Cayley-Bacharach condition, and is not arithmetically Gorenstein.
\end{example}
\begin{example}\label{easyhomog} {\it Component $L_2$ not the homogenization of $L'_2$.}
If r=3, $R={{\sf k}}[x_1,x_2,x_3]$, $\Gamma '={{\sf k}}[ Y_1,Y_2] , f=Y_1^{[2]}-Y_1Y_2^{[2]}\in \Gamma ' $, then
$I'=(y_1^2+y_1y_2^2,y_2^3)$, of Hilbert function
$H(R'/I')=(1,2,2,1)$, and $\alpha (\Z)=3$. The related homogeneous ideal $I$ in $R$ determining
the degree-6 scheme $\Z$
concentrated at $p_0=(0:0:1)$ in $\mathbb P^2$ is
\[
     I=(x_1^2z+x_1x_2^2,x_2^3,x_1^3,x_1^2x_2),
\]
of Hilbert function $H_\Z= (1,3,6,6,\ldots )$, so $\tau (\Z )=2,\sigma (\Z )=3$.
 Here the
homogenization of $f$ to degree $\alpha$ is
 $G=f[3]=X_1^{[2]}Z-X_1X_2^{[2]}$. By Lemma \ref{homoginvsys}, letting
$L=L_\Z={I_{\Z}}^{-1}$, we have that $L$ is simply determined by the actions of the pair $(z,Z)=(x_3,X_3)$
on
$L_{\alpha}$, which satisfies
\begin{align} L_{3}=R_3\circ F[6]&=
R_3\circ
(X_1^{[2]}Z^{[4]}-X_1X_2^{[2]}Z^{[3]}) \notag\\
&=
 \langle G,Z^{[3]},X_1Z^{[2]},X_1X_2Z,X_2^{[2]}Z,X_2Z^{[2]}\rangle .\notag
\end{align}
 Likewise, $L_{2}=R_1\circ {L}_{3}=\langle X_1^{[2]},X_1Z,X_1X_2,X_2^{[2]},X_2Z,Z^{[2]}\rangle \subset \Gamma
_2$. Note that
$L_2$ contains
 $X_1^{[2]}$, which is the partial of $G=f[3]$ with respect to $z$, but is not the homogenization of an
element of $L'_{\le 2}={{I'}^{-1}}_{\le 2}$, as $L'=\langle
f,Y_2^{[2]},Y_1Y_2,Y_2,Y_1,1\rangle$.
\end{example}
\begin{example} When $R',p$ are as above, and $I'=(y_1^2,y_2^3), f' = Y_1Y_2^{[2]}$, the local Hilbert
function is $H(R'/I')=(1,2,2,1)$, $\alpha (\Z)=3$, then
$I=I_\Z=(x_1^2,x_2^3)$, $H(R/I_\Z)=(1,3,5,6,6,\ldots )$, so $\sigma
(\Z )=4$, $\tau (\Z )=3=\alpha (\Z)$, and $L=I^{-1}$ is determined
by 
\begin{equation*}L_\tau = \langle
X_1X_2^{[2]},ZX_2^{[2]},Z^{[2]}X_2,ZX_1X_2,Z^{[2]}X_1,Z^{[3]}
\rangle ,
\end{equation*}
in the stronger sense that $L_j=R_{\tau}\circ (L_\tau
\cdot _{rp}Z^j)$ if $j\ge \tau$, and $L_j=R_{\tau -j}\circ L_{\tau}$
when $j\le \tau$. This example and Example \ref{regdeg} above
illustrate that $L$ must be determined by $L_{\sigma}$, but $L$ is
also determined by $L_{\tau}$ if $I$ is generated in degrees less or
equal to $\tau$. The next example shows that this
determination by $L_\tau$ (or by $L_\sigma)$ is usually in a
$\it{weaker}$ sense than here.
\end{example}

\begin{example}\label{noncloserp} {\it How does $L_\sigma$ determine $L$?} We choose a curvilinear ideal (one with $\mathcal R_p/
\mathcal I_p\cong {\sf k}[y]/y^n]$)
$\mathcal I_p=(y_1+y_2^2+y_2^3+y_2^4, y_2^5)\subset
\mathcal O_p$, of local Hilbert function $H(\mathcal O_p/\mathcal I_p)=(1,1,1,1,1)$. Using the computer
algebra program {\sc{macaulay}} \cite{BSE} we calculated its homogenization as
$I_\Z=(x_1z+x_2^2-x_1x_2,x_1z^2+x_2^2z+x_1^2z+x_2^3, x_1^3)$, of Hilbert function
$H_\Z=(1,3,5,5,\ldots  )$, so $\sigma (\Z)=3 < \alpha (\Z)=4$.  A local dual generator $f'\in
L'=(I')^{-1}$ is $f'=Y_2^{[4]}-Y_1Y_2^{[2]}+Y_1^{[2]}-Y_1Y_2-Y_2^{[2]}$. Since $\mathcal I_p$ is not homogeneous, its dual generator is determined only up to multiple $\lambda \circ f'$ by a unit
$\lambda$  of $R'$.  We have $ L'=\langle f',Y_2^{[3]}-Y_1Y_2-Y_1,Y_2^{[2]}-Y_1,Y_2,1 \rangle$, and,
letting
$F=\Homog(f',Z,4)$, we have
\begin{equation*}
L_4=\langle
F,X_2^{[3]}Z-X_1X_2Z^{[2]}-X_1Z^{[3]},X_2^{[2]}Z^{[2]}-X_1Z^{[3]},X_2Z^{[3]},Z^{[4]}\rangle
.
\end{equation*}
Here $L_3$ contains
$f'[3]=\Homog(f',Z,3)=-X_1X_2^{[2]}+X_1^{[2]}Z-X_1X_2Z-X_2^{[2]}Z$.
Note that $Z\cdot_{rp}f'[3]\notin L_4$. Here $L_4=(L_3:R_1)$
so $L_3$ determines $L_4$ (Proposition \ref{satinvsys} Equation
\eqref{sigmadetermines}), but $L_4\not= L_3\cdot_{rp}Z$ --- unlike the
simple relation $L_{i+1}=L_i\cdot_{rp}Z$ when $i\ge \alpha (\Z)$.
Also $L_4\not= R_{3}\circ (f'[3]\cdot_{rp} Z^{[4]})$. Rather, by
Lemma \ref{homoginvsys}, we need to use  $f'[\alpha]$: thus,
$L_j=R_{4}\circ (f'[4]\cdot_{rp} Z^{[j]})$.
\end{example}

We now return to one of our themes, deciding when a locally
Gorenstein zero-dimensional scheme is arithmetically Gorenstein,
with the aid of the inverse system.
\begin{proposition}\label{aGor}{\sc Cones that are aG.}
Suppose that $\Z\subset \mathbb P^n$ is a a degree-$s$
zero-dimensional locally Gorenstein subscheme, concentrated at a
single point $p\in \mathbb P^n$. Suppose further that $\Z$ is
defined by a homogeneous ideal $\mathcal I_p$ of the local ring
$\mathcal O_p$ at $p$ (we say that $\Z$ is $\it{conic}$, see
\cite[Lemma 6.1]{IK}).  Then $\Z$ is arithmetically Gorenstein.
\end{proposition}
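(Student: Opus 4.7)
The plan is to reduce to the coordinate point case, exploit the graded structure of the local inverse system, and then conclude via Lemma \ref{socle} that the Artin reduction $R/(I_\Z,z)$ has one-dimensional socle.

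First, by a projective linear change of coordinates I may assume $p = p_0 = (0{:}\cdots{:}0{:}1)$, as both the conic hypothesis and the arithmetically Gorenstein conclusion are preserved under such changes. Then $\mathcal I_p$ homogeneous in $\mathcal O_p$ means that $J := \mathcal I_p\cap R'$, with $R' = {{\sf k}}[y_1,\ldots,y_n]$, is a graded ideal, and hence its dual $L' = J^{-1}\subset\Gamma'$ is a graded $R'$-submodule. Since $\Z$ is locally Gorenstein, the graded Artin algebra $R'/J$ has one-dimensional socle, necessarily concentrated in degree $\alpha := \alpha(\Z)$; dually, $L'$ is cyclic with a \emph{homogeneous} generator $f'\in L'_\alpha$.

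Next, by Lemma \ref{homoginv} we have $L_\Z = \Homog(L',Z)\subset\Gamma$, and by Proposition \ref{AuxMainL}(\ref{is1}) the variable $z = x_{n+1}$ is a non-zero divisor on $R/I_\Z$. The key computation is of $W_z = L_\Z\cap\Gamma_z$. Writing a general degree-$u$ element of $L_\Z$ as $\sum_{i=0}^{\min(u,\alpha)} l_i\cdot Z^{[u-i]}$ with $l_i\in L'_i$, the condition of having no $Z$-factor forces $u\le\alpha$ and the element to equal $l_u\in L'_u$. Thus, under the identification $Y_i\leftrightarrow X_i$ ($1\le i\le n$), $W_z = L'$ as graded vector spaces. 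Moreover, $z$ annihilates $W_z$ and each $x_i$ acts on $W_z$ exactly as $y_i$ does on $L'$, so $M\circ W_z = M'\circ L'$, where $M = (x_1,\ldots,x_{n+1})$ and $M' = (y_1,\ldots,y_n)$.

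Finally, because $L'$ is cyclic with a single generator, $L'/(M'\circ L')$ is one-dimensional, and by Lemma \ref{socle}(\ref{dualizeii}) this space is the dual of $\Soc(A)$, where $A = R/(I_\Z,z)$. Hence $A$ is a Gorenstein Artin algebra, and since $z$ is a non-zero divisor on the one-dimensional Cohen-Macaulay ring $R/I_\Z$ (Theorem \ref{zero-dim}), it follows that $R/I_\Z$ itself is Gorenstein, i.e.\ $\Z$ is arithmetically Gorenstein. The only step that requires real care is the identification $W_z = L'$: the graded structure of $L'$ is essential, since without it one picks up extra $Z$-free elements from the homogenizations of lower-degree components of non-homogeneous inverse-system elements, as illustrated by Examples \ref{easyhomog} and \ref{noncloserp}.
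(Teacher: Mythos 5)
Your proof is correct, but it runs on the dual side of the correspondence, whereas the paper argues directly with the ideal. The paper's proof is a one-line computation: since $J=\mathcal I_p\cap R'$ is graded, the homogenization satisfies $(I_\Z)_i=\bigoplus_a z^{i-a}\cdot J_a$, so $R/(I_\Z,z)\cong R'/J$ outright; as $R'/J$ is the (graded, Gorenstein) local algebra of $\Z$ and $z$ is a non-zero divisor, $R/I_\Z$ is Gorenstein, and one gets for free that $\Delta H_\Z=H(R'/J)$. You instead homogenize the inverse system via Lemma \ref{homoginv}, identify $W_z=L_\Z\cap\Gamma_z$ with $L'$ (which is exactly where the gradedness of $J$ enters, dually to the paper's use of it), and then invoke Lemma \ref{socle}(\ref{dualizeii}) plus Nakayama to see that $\Soc(R/(I_\Z,z))$ is one-dimensional; your identification $W_z=L'$ is in effect the Matlis dual of the paper's isomorphism $R/(I_\Z,z)\cong R'/J$, and your final step (type is computed from any linear non-zero divisor, as noted after Definition \ref{type}) is sound. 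The trade-off: the paper's route is shorter and yields the Hilbert-function statement as a by-product, while yours extracts only the socle dimension but illustrates the inverse-system machinery and makes explicit why the conic hypothesis is needed — without gradedness the $Z$-free part of $\Homog(L',Z)$ picks up extra elements, as in Examples \ref{easyhomog} and \ref{noncloserp}. Both arguments are correct; yours is a legitimate, slightly longer dual version of the same underlying reduction.
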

\begin{proof} We may suppose that the point is $p=(0:\ldots :0:1)$, and that $\mathcal{I}_p$ is defined by
$I'\subset R'=k[y_1,\ldots ,y_n]$. Then, letting $z=x_{n+1}$ we have
$(I_\Z)_i=\bigoplus_0^i z^{i-a}\cdot I'$, whence it follows that
$R/(I_\Z,z)\cong R'/I'$, implying that $R/I_\Z$ is Gorenstein, and
that $\Delta H_\Z=H(R'/I')$.
\end{proof}
\begin{remark} The converse of Proposition \ref{aGor} is false in $\mathbb P^2$. The ideal
$\mathcal I_p=(y_1^2,y_1y_2-y_2^3)\subset \mathcal O_p, p = (0:0:1)$
has local Hilbert function $H(\mathcal O_p/\mathcal
I_p)=(1,2,1,1,1)$, and is not homogeneous. The homogenized ideal
$I_\Z\subset R={{\sf k}}[x_1,x_2,x_3]$ satisfies
$I_\Z=(x_1^2,x_1x_2z-x_2^3,x_2^5)$, of Hilbert function
$H_\Z=(1,3,5,6,6,\ldots )$. Here $z$ is a non-zero divisor for
$R/I_\Z$, and the quotient $R/(z,I_\Z) \cong
{{\sf k}}[y_1,y_2]/(y_1^2,y_2^3)$, so $\Z$ is arithmetically Gorenstein.
\end{remark}
D. Bernstein and the second author \cite{BeI}, M. Boij
and D. Laksov \cite{BjL} gave examples of graded Gorenstein Artin algebras having
non-unimodal  Hilbert functions. Later M. Boij gave examples of such algebras whose
Hilbert functions have arbitrarily many maxima \cite{Bj2}. It follows from
Proposition \ref{aGor} that these examples lead to "thick points" that are
arithmetically Gorenstein schemes $\Z$ in $\mathbb P^n$, with non-unimodal
$h$-vector $\Delta H_\Z$. We give the first such
example of lowest embedding dimension, then socle degree.
\begin{corollary}\label{aGor2} There is an arithmetically Gorenstein, $\it{conic}$, zero-dimensional scheme $\Z$
concentrated at a single point $p\in \mathbb P^5$ with $\Delta H_\Z$ non-unimodal, and
satisfying
\begin{equation} \Delta H_\Z = (1,5,12,22,35,51,70,91,90,91, \ldots ,5,1).
\end{equation}
The homogeneous form $F'\in \Gamma '={{\sf k}}_{DP}[U,V,W,X,Y]$ defining $\mathcal I_p=\Ann(F')$ is
$F'=Uf+Vg$ where $f,g$ are general enough degree-15 forms in $W,X,Y$.
\end{corollary}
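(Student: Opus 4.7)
The statement reduces, via Proposition~\ref{aGor}, to exhibiting a graded Gorenstein Artin quotient $A = R'/I'$ of $R' = {{\sf k}}[u,v,w,x,y]$ with the prescribed non-unimodal Hilbert function: since $I'$ will be homogeneous, the scheme $\Z$ concentrated at $p_0 = (0{:}\cdots{:}0{:}1)\in\mathbb P^5$ with $\mathcal I_{p_0} = I'\mathcal O_{p_0}$ is conic, and Proposition~\ref{aGor} then gives both that $\Z$ is arithmetically Gorenstein and that $\Delta H_\Z = H(A)$. Taking $I' = \Ann(F')$ for $F' = Uf + Vg \in \Gamma'_{16}$ with $f, g \in {{\sf k}}_{DP}[W, X, Y]_{15}$ homogeneous of degree $15$ automatically makes $A$ Artinian Gorenstein of socle degree $16$ (principal inverse system); what remains is to compute $H(A)$ and check it takes the claimed shape.

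For this, I would decompose $L'_k = R'_{16-k}\circ F' \subset \Gamma'_k$ according to the $u,v$-content of monomials acting on $F'$. Since $F'$ is linear in $U$, linear in $V$, and has no $UV$ term, only monomials $m = m''$, $u\, m''$, or $v\, m''$ with $m'' \in P := {{\sf k}}[w,x,y]$ act nontrivially on $F'$; all other monomials annihilate it. This produces a vector-space direct sum
\begin{equation*}
L'_k \,=\, A_k \,\oplus\, B_k, \quad A_k := P_{15-k}\circ f + P_{15-k}\circ g, \quad B_k := \{U(m\circ f) + V(m\circ g): m \in P_{16-k}\},
\end{equation*}
where $A_k$ lies in the pure part ${{\sf k}}_{DP}[W,X,Y]_k$ and $B_k$ lies in $U\cdot {{\sf k}}_{DP}[W,X,Y]_{k-1} + V\cdot {{\sf k}}_{DP}[W,X,Y]_{k-1}$. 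For $f, g$ sufficiently general, the standard maximal-rank statements for inverse systems of general forms of degree $15$ in three variables (compressed Gorenstein algebras) give
\begin{equation*}
\dim A_k = \min\!\bigl(2\binom{17-k}{2},\binom{k+2}{2}\bigr), \qquad \dim B_k = \min\!\bigl(\binom{18-k}{2},\,2\binom{k+1}{2}\bigr).
\end{equation*}
Evaluating for $0 \le k \le 8$ and extending by Gorenstein symmetry $H(A)_k = H(A)_{16-k}$ produces the announced sequence; the critical computations are $H(A)_7 = 36+55 = 91$, $H(A)_8 = 45+45 = 90$, $H(A)_9 = 55+36 = 91$, which exhibit the dip responsible for non-unimodality.

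\textbf{Main obstacle.} The nontrivial point is to justify rigorously the two generic-rank statements. Each of $\dim A_k$ and $\dim B_k$ is a lower-semicontinuous function of $(f,g)\in P_{15}\oplus P_{15}$, so by semicontinuity it is enough to exhibit one pair $(f,g)$ attaining the expected rank simultaneously in both maps for every $k$. This can be checked by specializing to an explicit pair (e.g.\ a pair of generic monomial perturbations) and performing a finite verification in a computer algebra system such as {\sc Macaulay}. Alternatively, the underlying non-unimodality phenomenon is exactly that of the Bernstein--Iarrobino and Boij--Laksov constructions in \cite{BeI,BjL}, and the requisite transversality for forms of the shape $Uf + Vg$ already appears there. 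With the ranks established, Proposition~\ref{aGor} completes the passage from the Artin algebra to the arithmetically Gorenstein scheme $\Z \subset \mathbb P^5$.
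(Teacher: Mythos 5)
Your proposal is correct and takes essentially the same route as the paper: the paper offers no separate argument for Corollary \ref{aGor2}, obtaining it by applying Proposition \ref{aGor} to the known non-unimodal graded Gorenstein Artin algebras with dual generator $Uf+Vg$ of Bernstein--Iarrobino and Boij--Laksov \cite{BeI,BjL}, which is precisely your reduction. Your explicit computation --- equivalent to the statement that the type-two level algebra ${{\sf k}}[w,x,y]/(\Ann f\cap \Ann g)$ is compressed for general $f,g$, giving $H(A)_k=\min\bigl(\tbinom{k+2}{2},2\tbinom{17-k}{2}\bigr)+\min\bigl(2\tbinom{k+1}{2},\tbinom{18-k}{2}\bigr)$ --- correctly reproduces the content of the cited construction, including the critical values $91,90,91$, and your semicontinuity remark adequately handles the simultaneous genericity.
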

\noindent $\mathbf{Remark.}$ When $\Z$ is concentrated at a single
point, defined by $\mathcal I_p\subset \mathcal O_p$, and the local
Hilbert function $H(\mathcal O_p/\mathcal I_p)$ is symmetric, then
it is known that $\mathcal I_p$ is Gorenstein if and only if the
associated graded ideal $\mathcal I^*_p$ is also Gorenstein
(\cite[Proposition 1.9]{Wa},\,\cite[Proposition 1.7]{I3}). It is not
hard to show that when $H(\mathcal O_p/\mathcal I_p)$ is symmetric,
$\Z$ is arithmetically Gorenstein if and only if $\Z$ is Gorenstein
and $\mathcal I_p=\mathcal I_p^\ast$.

\subsection{Schemes concentrated at an arbitrary point of $\mathbb P^n$.}\label{arbitrarypoint}
 We now extend the results of the previous subsection to any point
$p\in \mathbb A^n\subset \mathbb P^n$. We translate the point
to the origin using the action of the linear group, and use the adjoint representation on $\Gamma$, to
translate the inverse system.
Following F.~H.~S. Macaulay, we take
$\widehat{\Gamma '}= {{\sf k}}_{DP}\{\{Y_1,\ldots ,Y_n\}\}$, the divided power analog of the power series ring,
upon which the polynomial ring
$R'={{\sf k}}[y_1,\ldots ,y_n]$ acts by contraction, as before.  The rings $R, \Gamma$, remain the same, but a
finite inverse system will be an $R'$-submodule of $\widehat{\Gamma '}$ having finite dimension as
${{\sf k}}$-vector space. When $p=(a_1:\ldots : a_n:1)\in \mathbb P^n$, we will
sometimes use
$q=(a_1,\ldots ,a_n)$ to specify the point $q=(a_1,\dots ,a_n)$ of $\mathbb A^n$ without regard to
$\mathbb P^n$. We let
\begin{equation}\label{fp}
f_q =(1-\sum a_iY_i)^{-1}=1+\sum_{k\ge 1} (\sum a_iY_i)^{[k]}=\sum_k \sum_{U\mid |U|=k}
a^UY^{[U]}.
\end{equation}
Here $f_q$ is the divided power analog of the exponential series $F_q=\exp  ({\sum a_iY_i})$ in
the usual power series ring ${\widehat{\mathcal R'}}$. We will sometimes use $f_p,F_p$ to denote the
corresponding $f_q,F_q$.
\begin{lemma}\label{Macaffinedual}\cite[\S 64, p. 73]{Mac}\quad {\sc  Inverse systems for ideals with
support an arbitrary point.} The finite inverse system $J\subset
\widehat{\Gamma '}$ (respectively, $J'\subset {\widehat{\mathcal
R'}}$ in the differentiation action of $R'$ on ${\widehat{\mathcal
R'}}$) is the inverse system of an ideal of $R'$ with support the
point $p=(a_1,\ldots , a_n)\in \mathbb A^n$ if and only if there
exists an integer $N$ such that
\begin{equation}\label{expdp}
 f_q\subset J\subset \Gamma '_{\le N}\cdot
f_q \subset \widehat{\Gamma '}
\end{equation}
or, respectively,
\begin{equation}\label{exp}
\exp (\sum a_iY_i)\subset J' \subset \mathcal R '_{\le N}\cdot
\exp (\sum a_iY_i)\subset \mathcal {\widehat{\mathcal R'}}.
\end{equation}
\end{lemma}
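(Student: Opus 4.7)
The plan is to use the special series $f_q = (1 - \sum a_i Y_i)^{-1} = \sum_U a^U Y^{[U]}$ as a dictionary between inverse systems supported at $p$ and inverse systems supported at the origin. First I would verify two key calculations about $f_q$:

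(a) For each variable $y_i$, the contraction $y_i \circ f_q = \sum_U a^U Y^{[U-e_i]} = a_i f_q$, so by linearity $h \circ f_q = h(a_1,\ldots,a_n)\, f_q$ for every $h \in R'$. In particular $\Ann_{R'}(f_q) = m_p$.

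(b) Using Lemma \ref{lindiff} (the derivation property of $R_1$ acting on $\Gamma$) term-by-term on the expansion of $f_q$, one gets $y_i \circ (f_q \cdot G) = a_i\, f_q \cdot G + f_q \cdot (y_i \circ G)$ for any $G \in \widehat{\Gamma'}$, hence $(y_i - a_i) \circ (f_q \cdot G) = f_q \cdot (y_i \circ G)$.

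From (b), letting $\phi \colon R' \to R'$ be the translation automorphism $y_i \mapsto y_i - a_i$, the multiplication map $\mu_q(G) = f_q \cdot G$ intertwines the contraction actions via $\phi$: that is, $\phi(h) \circ \mu_q(G) = \mu_q(h \circ G)$. Since $\mu_q$ is a bijection of $\widehat{\Gamma'}$ (its inverse is multiplication by the polynomial $1 - \sum a_i Y_i$), it follows that $\Ann(f_q \cdot G) = \phi(\Ann(G))$. Because $\phi$ carries $M'=(y_1,\ldots,y_n)$ to $m_p$ and $(M')^{N+1}$ to $m_p^{N+1}$, applying this to the standard identity $((M')^{N+1})^\perp = \Gamma'_{\le N}$ in $\widehat{\Gamma'}$ yields the key equality
\begin{equation*}
(m_p^{N+1})^\perp = \Gamma'_{\le N}\cdot f_q.
\end{equation*}

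With this in hand, both directions are short. For necessity, if $J = (I')^\perp$ for some $I'\subset R'$ with support $p$, then there is $N$ with $m_p \supset I' \supset m_p^{N+1}$; dualizing gives $J \supset (m_p)^\perp \supset \langle f_q \rangle$ (using (a)) and $J \subset (m_p^{N+1})^\perp = \Gamma'_{\le N}\cdot f_q$. For sufficiency, if $f_q \in J \subset \Gamma'_{\le N}\cdot f_q$, let $I' = \Ann(J)$; then $I' \subset \Ann(f_q) = m_p$ by (a), and $I' \supset \Ann(\Gamma'_{\le N}\cdot f_q) = m_p^{N+1}$ by the displayed equality, so $I'$ is $m_p$-primary and $J$ is finite dimensional.

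For the partial differentiation version on ${\widehat{\mathcal R'}}$, the exponential $F_q = \exp(\sum a_i Y_i)$ replaces $f_q$: under the PDO action, $(\partial/\partial Y_i)(F_q) = a_i F_q$ and the Leibniz rule for ordinary partials supplies the analogue of (b), so the same argument goes through verbatim. The only step that requires any care is checking (b); everything else is formal manipulation with the dictionary $\mu_q$, and the main conceptual content is simply the observation that on $\widehat{\Gamma'}$ the ``translation of coordinates'' $y_i \mapsto y_i - a_i$ is realized by multiplication by the group-like element $f_q$.
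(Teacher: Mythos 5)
Your route is correct in outline and genuinely different from the paper's. The paper works in the PDO setting: it notes that $m_q=(y_1-a_1,\ldots ,y_n-a_n)$ kills $\exp(\sum a_iY_i)$ because each $y_i$ acts as multiplication by $a_i$, obtains $(m_q^{N+1})^{\perp}=\mathcal R'_{\le N}\cdot \exp(\sum a_iY_i)$ from one evident containment plus a dimension count, and declares the divided-power statement \eqref{expdp} the analog of \eqref{exp}. You instead work in $\widehat{\Gamma'}$ and get the key identity $(m_q^{N+1})^{\perp}=\Gamma'_{\le N}\cdot f_q$ from the intertwining $\phi(h)\circ (f_q\cdot G)=f_q\cdot (h\circ G)$; that intertwining is precisely Lemma \ref{transforma}(i)--(ii) (Macaulay's Comparison Lemma), which the paper proves separately right after this lemma, also from Lemma \ref{lindiff}. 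So in effect you merge the two lemmas; there is no circularity since you derive the intertwining directly from Lemma \ref{lindiff}, and your argument replaces the paper's dimension count (which itself tacitly uses that multiplication by the exponential is injective on $\mathcal R'_{\le N}$) by the bijectivity of $\mu_q$. The dualization steps at the end are fine, granting — as the paper also does — the standard double-annihilator fact for finite-dimensional $R'$-submodules of $\widehat{\Gamma'}$.

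One step as written is wrong, though easily repaired: the inverse of $\mu_q$ is \emph{not} multiplication by the polynomial $1-\sum a_iY_i$. In the divided power ring $Y\cdot Y^{[k]}=(k+1)Y^{[k+1]}$, so already in one variable $(1-aY)\cdot\sum_k a^kY^{[k]}=1-\sum_{m\ge 2}(m-1)a^mY^{[m]}\neq 1$; the notation $f_q=(1-\sum a_iY_i)^{-1}$ in \eqref{fp} is only a mnemonic for the coefficients $a^U$, and likewise in the PDO setting the inverse of $\exp(\sum a_iY_i)$ is $\exp(-\sum a_iY_i)$, not $1-\sum a_iY_i$. The fact you need — that $\mu_q$ is bijective — is still true: setting $L=\sum a_iY_i$, one has $L^{[i]}\cdot L^{[j]}=\binom{i+j}{i}L^{[i+j]}$, hence the degree-$m$ part of $f_q\cdot f_{-q}$ (with $f_{-q}=\sum_U(-a)^UY^{[U]}$) is $\bigl(\sum_{i+j=m}(-1)^j\binom{m}{i}\bigr)L^{[m]}=(1-1)^mL^{[m]}=0$ for $m\ge 1$, so $f_q\cdot f_{-q}=1$, an integer identity valid in every characteristic allowed here; in characteristic $0$ one can equivalently identify $\widehat{\Gamma'}$ with the power series ring and $f_{\pm q}$ with $\exp(\pm L)$. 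With the inverse corrected to $f_{-q}$, the rest of your argument goes through.
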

\begin{proof}[Proof Outline] Here \eqref{expdp} is the divided power analog of \eqref{exp}. To show
\eqref{exp}, note
first that $m_q=(y_1-a_1,\ldots ,y_n-a_n) \subset R'$ annihilates the one-dimensional
vector space $\exp (\sum a_iY_i) \in \mathcal {\widehat{\mathcal R'}}$, since $y_i$ acting by
differentiation on this series is the same as multiplication by $a_i$. Likewise,
$(m_q^{N+1})^{\perp}\subset
\mathcal R '_{\le N}\cdot \exp
(\sum a_iY_i)$ is immediate, and a dimension check shows \eqref{exp}.
\end{proof}
\par
The following lemma is a consequence of \cite[\S 64,66]{Mac} (see
Remark \ref{compare} below). Given $q=(a_1,\ldots , a_n)\in \mathbb
A^n$ and an ideal $J$ of $R'$ concentrated at the origin, we denote
by $T_q(J)$ the translated ideal $T_q(J)=(h(y_1-a_1,\ldots
,y_n-a_n)\mid h\in J)$. Clearly, $T_q(J)$ is concentrated at $q$.
\begin{lemma}\label{transforma}{\sc Macaulay's Comparison Lemma: change of origin in $\mathbb
A^n$.}
\begin{enumerate}[\rm(i)]
\item\label{transformai} If $h'\in R',\  f'$ in ${\widehat{\Gamma '}}$, then $h'(y_1-a_1,\ldots
,y_n-a_n)\circ (f'\cdot f_q)=(h'(y_1,\ldots ,y_n)\circ f')\cdot f_q$.
\item\label{transformaii} If $L''\subset {\widehat{\Gamma '}}$ is the inverse system of an ideal $J$
of $R'$ that is concentrated at the origin, then $L''\cdot f_q$ is the inverse system of $T_q(J)$.
\item\label{transformaiii} Let $\mathcal I_q\subset \mathcal O_q$ be the ideal $J'\cdot \mathcal
O_q$, where $J'\subset R'$ has support $q$. Then the inverse system $L'=(J')^{-1}\subset
\widehat{\Gamma '}$ has the form $L'=L''\cdot f_q$, where $L''\subset \Gamma '$ is the inverse system
of the ideal $J=(T_q)^{-1}(J')$, concentrated at the origin. \par
\item\label{transformaiv} The $R'$ submodules of $\widehat{\Gamma '}$ generated by $L'$ and by $L''$
in (iii) are isomorphic.
\item\label{transformav} The analogous statements to (i)-(iv) are true for
the partial differentiation action of $R'$ on the power series ring $\widehat{\mathcal R'}$, with $f_q$
replaced by
$F_q=\exp(\sum a_iY_i)$.
 \end{enumerate}
\begin{proof} The part (\ref{transformaii}) is implied by (\ref{transformai}). It suffices to show
part
(\ref{transformai}) for monomials $h'$; by induction on degree,
we may suppose that $h'=y_i$ (since the statement is obvious for $h'=$ constant). Then
we have by additivity of contraction, and Lemma \ref{lindiff},
\begin{align*} (y_i-a_i)\circ (f'\cdot f_q) &= y_i\circ (f'\cdot f_q)-a_i\circ (f'\cdot f_q)\\
&=\left( y_i\circ f' \cdot f_q +f'\cdot y_i \circ f_q\right)-a_if'\cdot f_q\\
&=y_i\circ f' \cdot f_q+f'\cdot a_if_q -a_if'\cdot f_q \\
&= (y_i\circ f') \cdot f_q,
\end{align*}
as claimed. This completes the proof of (\ref{transformaii}). Any ideal $J'$ of $R'$ concentrated
at $p$ satisfies, $J'=T_q(J), J=(T_q)^{-1}(J')$, so (\ref{transformaii}) implies
(\ref{transformaiii}). Also, (\ref{transformaiv}) is immediate.
\end{proof}
\end{lemma}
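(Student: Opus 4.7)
The plan: the whole lemma hinges on part (i), a twisted Leibniz identity saying that translating $h'$ to $q$ is dual to multiplying by $f_q$ on the $\Gamma$-side; parts (ii)--(iv) are formal consequences, and (v) is the same argument translated to $\widehat{\mathcal R'}$.

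First I would prove (i). By $\sf k$-bilinearity of the contraction action it suffices to treat monomial $h'$, and by induction on $\deg h'$ it further reduces to the case $h'=y_i$, i.e.\ to verifying
\begin{equation*}
(y_i-a_i)\circ(f'\cdot f_q) \;=\; (y_i\circ f')\cdot f_q.
\end{equation*}
Expanding the left side gives $y_i\circ(f'\cdot f_q)-a_i(f'\cdot f_q)$. Applying Lemma \ref{lindiff} termwise (the Leibniz rule is graded but $f_q$ is a formal sum of homogeneous pieces, and contraction by a degree-$1$ element is continuous in the $M'$-adic topology on $\widehat{\Gamma'}$), the first summand becomes $(y_i\circ f')\cdot f_q + f'\cdot(y_i\circ f_q)$. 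The crux is then the identity $y_i\circ f_q = a_if_q$, which one reads directly off \eqref{fp}: contracting by $y_i$ shifts the multiindex $U\mapsto U-e_i$, and reindexing gives back a factor $a_i$ times the original series. The term $f'\cdot a_if_q$ then cancels $-a_if'\cdot f_q$, leaving exactly $(y_i\circ f')\cdot f_q$, as required.

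From (i), part (ii) is immediate: any $h\in T_q(J)$ has the form $h=h'(y_1-a_1,\ldots,y_n-a_n)$ with $h'\in J$, so for any $f'\in L''$, (i) gives $h\circ(f'\cdot f_q)=(h'\circ f')\cdot f_q=0$. Hence $L''\cdot f_q\subseteq (T_q(J))^{-1}$. For the reverse inclusion note that multiplication by $f_q$ is injective on $\widehat{\Gamma'}$ (its constant term is $1$), and that $T_q$ is a ${\sf k}$-algebra automorphism of $R'$, so the colengths of $J$ and $T_q(J)$ agree; comparing ${\sf k}$-dimensions yields equality. Part (iii) is (ii) applied to $J=T_q^{-1}(J')$, which is concentrated at the origin by construction. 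Part (iv) then follows because the same map $f'\mapsto f'\cdot f_q$ is a ${\sf k}$-linear bijection $L''\to L'$, and by (i) it intertwines the natural $R'$-action on $L''$ with the $R'$-action at $q$ on $L'$, producing the desired $R'$-module isomorphism.

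Part (v) is a clause-by-clause transcription, where the only non-formal input is the analog $\partial/\partial Y_i\,\exp(\sum a_jY_j)=a_i\exp(\sum a_jY_j)$, which is immediate. The main obstacle I expect is bookkeeping rather than conceptual: carefully justifying the termwise use of Lemma \ref{lindiff} and the identity $y_i\circ f_q=a_if_q$ inside the completion $\widehat{\Gamma'}$, since $f_q$ is an infinite series while Lemma \ref{lindiff} is stated for graded pieces. Working graded-piece by graded-piece (each fixed-degree component of $f'\cdot f_q$ involves only finitely many terms of $f_q$) makes all identities finite-sum statements and bypasses any convergence issue.
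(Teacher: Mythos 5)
Your proposal is correct and follows essentially the same route as the paper: reduce (i) to the case $h'=y_i$ by bilinearity and induction, apply Lemma \ref{lindiff} together with the identity $y_i\circ f_q=a_if_q$ to get the cancellation, and then deduce (ii)--(iv) formally, with (v) as the exponential analogue. The only difference is that you spell out the reverse inclusion in (ii) via injectivity of multiplication by $f_q$ and a colength comparison, a detail the paper leaves implicit when it says (ii) ``is implied by (i)''.
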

\begin{remark}\label{compare} Macaulay \cite[\S 64,
p.72]{Mac} describes the transform in Lemma~\ref{transforma}, as follows. Let $F=\sum a_{p_1,\ldots
,p_n}y_1^{p_1}\cdots y_n^{p_n}$ be a polynomial, let $E=\sum
c_1^{p_1}\cdots c_n^{p_n}\left( y_1^{p_1}\cdots y_n^{p_n}\right)
^{-1}$ be a modular equation, and consider the new origin
$(-a_1,-a_2,\ldots ,-a_n)$. The transformed polynomial is $F'=\sum
a_{p_1,\ldots ,p_n}(y_1-a_1)^{p_1}\cdots (y_n-a_n)^{p_n}$, and the
transformed modular equation is 
\begin{equation*}
E'=\sum (c_1+a_1)^{p_1}\cdots
(c_n+a_n)^{p_n}\left( y_1^{p_1}\cdots y_n^{p_n}\right) ^{-1}.
\end{equation*}
 Here
the coefficients $c$ are in symbolic notation: that is, after
expanding the expressions, $c_1^{p_1}\cdots c_n^{p_n}$ is to be put
equal to the coefficient $c_{p_1,\ldots ,p_n}$. For
$E=1$, then $E'=\sum a_1^{p_1}\cdots a_n^{p_n}\left( y_1^{p_1}\cdots
y_n^{p_n}\right) ^{-1}$, the inverse function of $(x_1-a_1,\ldots
,x_n-a_n)$. \par Macaulay translates a mutually
perpendicular polynomial/inverse system pair at the origin, to one
concentrated at the point $q=(a_1,\ldots ,a_n)$. We rewrite
Macaulay's formula for $E'$ using the multiindex $U=(u_1,\ldots
,u_n)$ where $U\le P$ means, $u_i\le p_i$ for each $i$, as follows:
\begin{align*}
E' &= \sum_{U,P\mid 0\le U\le P} c_{u_1,\ldots ,u_n}{{p_1}\choose{u_1}}\cdots
{{p_n}\choose{u_n}}a_1^{p_1-u_1}\cdots a_n^{p_n-u_n}\cdot \left( y_1^{p_1}\cdots
y_n^{p_n}\right) ^{-1} \text{ (Macaulay's notation) }\\
&=\sum_{U,P\mid 0\le U\le P} c_{u_1,\ldots ,u_n}{{p_1}\choose{u_1}}\cdots
{{p_n}\choose{u_n}}a_1^{p_1-u_1}\cdots a_n^{p_n-u_n}\cdot Y_1^{[p_1]}\cdots
Y_n^{[p_n]} \text{ (our notation)}\\
&=\sum_{U,P-U\mid 0\le U,0\le P-U} c_{u_1,\ldots ,u_n}\left( Y_1^{[u_1]}\cdots
Y_n^{[u_n]}\right) a_1^{p_1-u_1}\cdots a_n^{p_n-u_n}\cdot Y_1^{[p_1-u_1]}\cdots Y_n^{[p_n-u_n]}\\
&=\quad E\cdot f_q .\\
&\text{ The product in the last two steps is that of the divided power
ring
${\widehat{\Gamma '}}$}.
\end{align*}

Note that our Lemma
\ref{transforma}, Equation \eqref{transformai} when $h'\circ f'=0$, is equivalent to Macaulay's
formula, so Lemma \ref{transforma} \eqref{transformaii} is a consequence of Macaulay's formula for
changing the point of origin.
\end{remark}

Fix a  point $q=(a_1,\ldots ,a_n)\in \mathbb A^n\subset \mathbb P^n$ with projective
coordinates $p=(a_1:\ldots :a_n:1)$. Let $J' \subset R'$ be an ideal
supported at $q$, so that $(R'/J')\cong
\mathcal O_q/ \mathcal I_q, \mathcal I_q=J'\cdot \mathcal O_q$ defines an Artin quotient.
By Lemma
\ref{transforma} its inverse system $L'=(J')^{-1}\subset \widehat{\Gamma '}$ satisfies $L'=L''\cdot
f_q$, where $L''\subset \Gamma '$ is the inverse system of $J=(T_q)^{-1}(J')$. Recall that
$L_p=a_1X_1+\cdots +a_nX_n+Z$. Recall the homogenization $\Homog(L'',L_p,u)$ for inverse systems $L''\subset \Gamma'$ (Definition
\ref{defhomcoordp}).
\begin{theorem}\label{Homogp} {\sc Comparison Theorem.} Let
$I_\Z\subset R$ be the saturated ideal defining the scheme $\Z$
concentrated at the point $p\in \mathbb A^n\subset \mathbb P^n$, and let
$L_\Z=I_\Z^{-1}\subset \Gamma$ be its global inverse system. Let
$J'\subset R'$ be the ideal defining $\Z\subset \mathbb A^n$ and
$L'=(J')^{-1}\subset \widehat{\Gamma '}$ its affine inverse system.
Let $J=T_q^{-1}(J')$, and $L''=J^{-1}\subset \Gamma'$ its inverse
system.  Let $\alpha=\alpha (\Z)$  and suppose that $V''\subset
\Gamma '_{\le \alpha}$ generates $L''$ (so $L''=R'\circ V''$), and
set $V=\Homog(V'',L_p,\alpha)$.
\begin{enumerate}[\rm (i)]
\item Then the global inverse system $L_\Z$ satisfies
\begin{align}\label{Homogpeq} (L_\Z)_i = \Homog (L''_{\le \alpha},L_p,i)
          = R_\alpha \circ(V\cdot_{rp} L_p^{[i]}).
\end{align}
\item  Furthermore, let $g$ denote the linear transformation of $R$
taking $p$ to the origin, and $g^\ast$ the contragradient transform
on $\Gamma$, and set $\Z_o=\Proj (R/g(I_\Z))$,
$L_o=(I_{\Z_o})^{-1}$. Then we have
\begin{equation}
L=g^{\ast}\circ L_o.
\end{equation}
The $R$-module $L_\Z$ is isomorphic to $L_o$. Also, if $\Z$ is any
zero-dimensional scheme concentrated at $p$, then $L_\Z=(I_\Z)^{-1}
$ satisfies the first part of \eqref{Homogpeq}, for a suitable
$L''\subset \Gamma '_{\le \alpha}$, where $\alpha = \alpha (\Z)$;
conversely, if an inverse system $L$ satisfies $L_i=\Homog(L''_{\le
\alpha},L_p,i)$, then $L=L_\Z$ for a zero-dimensional scheme $\Z$
concentrated at $p$.
\end{enumerate}
\end{theorem}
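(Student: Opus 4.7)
The plan is to reduce the problem at the arbitrary point $p$ to the coordinate-point case $p_0$ already handled in Proposition~\ref{AuxMainL} and Lemma~\ref{homoginvsys}, via the linear change of coordinates $g$. Concretely take $g$ to be the graded automorphism of $R$ with $g(x_i)=x_i+a_iz$ for $1\le i\le n$ and $g(z)=z$; then $g(m_p)=m_{p_0}$, so $g(I_\Z)=I_{\Z_o}$ is indeed concentrated at $p_0$, as required. Its contragredient $g^\ast$ on $\Gamma$ (defined by $g(h)\circ F=h\circ g^\ast(F)$ for all $h\in R$, $F\in \Gamma$) is the divided-power algebra automorphism satisfying $g^\ast(X_i)=X_i$ for $1\le i\le n$ and $g^\ast(Z)=L_p$, as one checks on $R_1\times\Gamma_1$. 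Since $g^\ast$ is a DP-algebra map, it sends $Z^{[k]}\mapsto L_p^{[k]}$ and preserves each $X_j^{[k_j]}$ for $j\le n$; consequently it sends $\Homog(f'',Z,u)=\sum f''_i(X_1,\ldots,X_n)\cdot Z^{[u-i]}$ to $\Homog(f'',L_p,u)=\sum f''_i(X_1,\ldots,X_n)\cdot L_p^{[u-i]}$ for every $f''\in\Gamma'$.

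The duality $g(h)\circ F=h\circ g^\ast(F)$ immediately gives that $g^\ast$ maps $L_o=I_{\Z_o}^{-1}$ bijectively onto $L_\Z=I_\Z^{-1}$: if $F\in L_o$ and $h'\in I_\Z$, then $h'\circ g^\ast(F)=g(h')\circ F=0$ since $g(h')\in g(I_\Z)=I_{\Z_o}$. This proves the equation $L_\Z=g^\ast\circ L_o$ of part~(ii). To prove~(i), apply Lemma~\ref{homoginvsys} to $\Z_o$: its local affine inverse system at the origin is exactly $L''=J^{-1}\subset\Gamma'$, because by Lemma~\ref{transforma}(\ref{transformaiii}) the dehomogenization of $I_{\Z_o}$ at $z=1$ is the translated ideal $J=T_q^{-1}(J')$. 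Thus, with $V_o=\Homog(V'',Z,\alpha)$,
\begin{equation*}
(L_o)_i=\Homog(L''_{\le\alpha},Z,i)=R_\alpha\circ\bigl(V_o\cdot_{rp}Z^{[i]}\bigr).
\end{equation*}
Applying $g^\ast$ to this identity, using the intertwining $g^\ast\bigl(\Homog(f'',Z,u)\bigr)=\Homog(f'',L_p,u)$ established in the first paragraph together with the equivariance $g^\ast(R_\alpha\circ W)=g(R_\alpha)\circ g^\ast(W)=R_\alpha\circ g^\ast(W)$ (valid since $g$ preserves grading, so $g(R_\alpha)=R_\alpha$) and the identification $g^\ast(V_o)=V$, one obtains both equalities of \eqref{Homogpeq}.

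The remaining parts of~(ii) follow by the same transport principle. The asserted $R$-module isomorphism $L_\Z\cong L_o$ is the map $g^\ast$, where the $R$-action is respected after twisting by the automorphism $g$ of $R$. The direct converse in~(ii) — that any zero-dimensional scheme concentrated at $p$ satisfies the formula — is just~(i) applied to such a scheme. For the backward converse, if an inverse system $L\subset \Gamma$ satisfies $L_i=\Homog(L''_{\le\alpha},L_p,i)$ for some $L''\subset\Gamma'_{\le\alpha}$, then $(g^\ast)^{-1}(L)_i=\Homog(L''_{\le\alpha},Z,i)$ satisfies the coordinate-point inclusion ${{\sf k}}_{DP}[Z]\subset (g^\ast)^{-1}(L)\subset \Gamma_{\le\alpha}\cdot{{\sf k}}_{DP}[Z]$, so by Proposition~\ref{AuxMainL}(\ref{is6}) it is the global inverse system of a scheme concentrated at $p_0$; applying $g^{-1}$ then produces a scheme concentrated at $p$ whose global inverse system is $L$. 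The main technical substance is genuinely just the bookkeeping behind intertwining $g^\ast$ with the two homogenization operations, and this in turn reduces to the elementary identities $g^\ast(Z)=L_p$ and $g^\ast(X_i)=X_i$.
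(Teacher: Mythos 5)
Your overall strategy is the paper's own: transport the coordinate-point results (Lemma \ref{homoginvsys}, Proposition \ref{AuxMainL}(\ref{is6})) by a linear substitution of $R$ whose induced map on $\Gamma$ fixes the $X_i$ and sends $Z$ to $L_p$. The genuine problem is the identity you use to define and then exploit $g^\ast$. There is no map $g^\ast$ of $\Gamma$ satisfying $g(h)\circ F=h\circ g^\ast(F)$ for all $h\in R$, $F\in\Gamma$: a check on $R_1\times\Gamma_1$ only pins down $g^\ast$ on $\Gamma_1$ (the transpose of $g_1$, which does fix the $X_i$ and send $Z\mapsto L_p$), and the relation fails in higher degrees. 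Already for $n=1$, $g(x)=x+az$, $h=x$, $F=Z^{[2]}$ one has $g(x)\circ Z^{[2]}=aZ$, whereas $x\circ g^\ast(Z^{[2]})=x\circ\bigl(a^2X^{[2]}+aXZ+Z^{[2]}\bigr)=a^2X+aZ$. The correct statement (the one the paper invokes, via the contragredient and equivariance of contraction) is, for your choice of $g$ and with $g^\ast$ the divided-power automorphism fixing the $X_i$ and sending $Z\mapsto L_p$,
\begin{equation*}
h\circ g^\ast(F)\;=\;g^\ast\bigl(g(h)\circ F\bigr),\qquad h\in R,\ F\in\Gamma;
\end{equation*}
the two sides of your identity differ exactly by an application of $g^\ast$, which is why your degree-one check did not detect the error.

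Fortunately every consequence you draw needs only the corrected relation, so the gap is local and repairable rather than structural. For $F\in L_o$ and $h'\in I_\Z$ one gets $h'\circ g^\ast(F)=g^\ast\bigl(g(h')\circ F\bigr)=0$ since $g(h')\in I_{\Z_o}$ and $g^\ast$ is injective, so $g^\ast(L_o)=L_\Z$ still holds; and $g^\ast(R_\alpha\circ W)=g^{-1}(R_\alpha)\circ g^\ast(W)=R_\alpha\circ g^\ast(W)$ (note the inverse: your displayed equality $g^\ast(R_\alpha\circ W)=g(R_\alpha)\circ g^\ast(W)$ is again the wrong intertwining, but harmless here because $R_\alpha$ is stable under $g^{\pm 1}$). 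With these corrections your transport of \eqref{isgen} to \eqref{Homogpeq}, and the converse via Proposition \ref{AuxMainL}(\ref{is6}), go through and coincide with the paper's argument; your extra bookkeeping (that the dehomogenization of $g(I_\Z)$ at $z=1$ is $J=T_q^{-1}(J')$ --- a direct computation rather than Lemma \ref{transforma}(\ref{transformaiii}), which concerns inverse systems and the factor $f_q$) is correct and in fact makes explicit a point the paper leaves implicit.
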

\begin{proof} The linear transformation of $R$ taking $p_0=(0:\ldots :0:1)$ to
$p=(a_1:\ldots :a_n:1)$ is $g(x_1)=x_1'=x_1-a_1z,\dots
,g(x_n)=x_n'=x_n-a_nz, g(z)=z'=z$. The contragradient transform of
$\Gamma=R^\vee$ satisfies $g^\ast (v^{\ast})(v)=v^{\ast}(g^{-1}v)$,
and is readily seen to be $g^\ast (X_i)=X_i, 1\le i \le n; \text {
and } g^{\ast}(Z)=L_p$. The contraction map is equivariant (see, for
example \cite{Mac}, or \cite[Prop. A3]{IK}), so for $h\in R,F\in
\Gamma$, $g^{\ast}(h\circ F)=g(h)\circ (g^\ast F)$. Thus,
\eqref{Homogpeq} follows from Lemma \ref{homoginvsys} and in
particular Equation \eqref{isgen}. The last statement follows from
Proposition \ref{AuxMainL} (\ref{is6}), similarly by translation to
$p$.
\end{proof}\par
\noindent $\mathbf{Remark.}$ We believe that equation
\eqref{Homogpeq} could also be approached directly from Lemma
\ref{transforma}, using the fact, homogenizing $v\cdot f_q, v\in
\Gamma '$ to a given degree, with respect to $Z$, is the same as
homogenizing $v$ with respect to $L_p$, since
$Z^{[j]}\cdot_{rp}\left( 1+\sum_{U\mid 1\le |U|\le
j}a^U(X/Z)^{[U]}\right) =L_p^{[j]}$.  By
Corollary \ref{macdual},   $\dim_{\sfÊk} V''=\type \,\mathcal O_\Z$ is the minimum possible dimension for $V''$.

\begin{example}\label{homogpex1} Let $\Z$ denote the degree-$4$ scheme concentrated at $p_1=(1:0:1)$,
determined by
$f'=(Y_1^{[2]}+Y_2^{[2]})\cdot f_{p_1}$. Then $I_{\Z}$
is the translation to $p_1$ of $(x_1x_2,x_1^2-x_2^2)$,
so $I_{\Z}=(x_1x_2-zx_2,x_1^2-2zx_2+z^2-x_2^2)$, of Hilbert function
$H_{\Z}=(1,3,4,4,\ldots )$, with $\tau(\Z)=\alpha(\Z)=2$. Here $L_3$ determines $L=(I_\Z)^{-1}$,
and satisfies, by Theorem \ref{Homogp},
\begin{align*}  L_3=&\Homog (V,X_1+Z,3), \text{ where } V =\langle
X_1^{[2]}+X_2^{[2]},X_1,X_2,1\rangle\\ =&\langle
3X_1^{[3]}+X_1^{[2]}Z+X_1X_2^{[2]}+X_2^{[2]}Z,\langle X_1,X_2\rangle\cdot
(X_1+Z)^{[2]},(X_1+Z)^{[3]}\rangle
\end{align*}
\end{example}
\begin{example}\label{homogpex2} Again, consider the point $p=(1:0:1)\in \mathbb P^2$ and the ideal
$\mathcal I_p\subset \mathcal O_p$ defined by $\mathcal I_p=\Ann (f'\cdot f_p)$
where
$ f'=Y_1^{[2]}+Y_2^{[3]}$: this ideal is the translation to
$p$ of the ideal found in Example \ref{regdeg}, concentrated at $p_0=(0:0:1)$.
We have
$\mathcal I_p=\left( (y_1-1)y_2,(y_1-1)^2-y_2^3,(y_1-1)^3,y_2^4\right)$, and its homogenization
in $R={{\sf k}}[x_1,x_2,z]$ is $I=\left( (x_1-z)x_2,(x_1-z)^2z-x_2^3,(x_1-z)^3,x_2^4\right)$, of Hilbert
function
$H_\Z=(1,3,5,5,5, \ldots )$, defining a scheme $\Z$ of regularity $\sigma (\Z )=3$. By Theorem~\ref{Homogp}
the
inverse system $L=L_\Z$ is determined by the ``generator'' element
$F=\Homog (f',L_p,3)=X_1^{[2]}\cdot L_p+X_2^{[3]}\in L$, $L_p=X_1+Z$: so
$L_i=R_3\circ G_{i+3}, G_{i+3}=  F\cdot_{rp}L_p^{[i]}$. Thus we have for $L_3$, which
determines
$L$,
\begin{align*}
L_3&=R_3\circ F\cdot_{rp} (X+Z)^{[3]}=R_3\circ \left( X_1^{[2]}\cdot (X_1+Z)^{[4]}+X_2^{[3]}\cdot
(X_1+Z)^{[3]}\right) \\ &=R_3\circ \left[ \left(
15X_1^{[6]}+10X_1^{[5]}Z+6X_1^{[4]}Z^{[2]}+3X_1^{[3]}Z^{[3]}+X_1^{[2]}Z^{[4]} \right) \right. +\\
 &\qquad \qquad \qquad \left. X_2^{[3]}\cdot \left( X_1^{[3]}+X_1^{[2]}Z+X_1Z^{[2]}+Z^{[3]}\right)
\right] .
\end{align*}
By the first part of \eqref{Homogpeq} in Theorem \ref{Homogp}, and Example \ref{regdeg} this is
\begin{equation*}
L_3=\Homog (V'',L_p,3),\text{ where } V''= R'\circ f'=\langle
f',Y_2^{[2]},Y_2,Y_1,1\rangle .
\end{equation*}
So $z^3\circ G_6=3X_1^{[3]}+X_1^{[2]}Z+X_2^{[3]}\in L_3$. Note the coefficient $3$ on the first
term; since $I\circ L=0$, we have $I\circ (z^3\circ G_6)=0$. Thus we have
\begin{align*}
(x_1-z)^3\circ (z^3\circ G_6)& =\left( x_1^3-3x_1^2z+3x_1z^2-z^3\right) \circ
(3X_1^{[3]}+X_1^{[2]}Z+X_2^{[3]})\\ &= x_1^3\circ
( 3X_1^{[3]})-3x_1^2z\circ (X_1^{[2]}Z)+0-0 \\
&= 0
\end{align*}
\end{example}

\begin{proposition}\label{satinvsysp} {\sc Inverse system of a scheme concentrated at a single point.}
Assume that $\cha {{\sf k}} = 0$, or $\cha {{\sf k}} >j$. Suppose $\Z \subset
\mathbb P$ is a degree-$s$ zero-dimensional scheme concentrated at
the point $p=(a_1:...:a_n:1)$, and regular in degree $\sigma$ with
$\alpha(\Z)=\alpha$ Then, $W \subset \Gamma$ is the inverse system
of $\Z$ if and only if each of the following equivalent conditions
holds:
\begin{enumerate}[\rm(i)]
\item\label{satinvsys(i)} $\exists \alpha\in \mathbb N\mid {{\sf k}}_{DP}[L_p]\subset W \subset \Gamma_{\le
\alpha}\cdot {{\sf k}}_{DP}[L_p]$;
\item
\begin{enumerate}[\rm(a)]\label{satinvsys(ii)}
\item\label{satinvsys(ii)(a)} $\dim_{\sfÊk} W_j=s\,\  \forall j\ge \tau=_{def}\sigma -1$ and \par
\item\label{satinvsys(ii)(b)}  $\forall (i,n)\mid n\ge \max\{i,\sigma \} ,\, R_{n-i}\circ
W_n = W_i$ \par
\item\label{satinvsys(ii)(c)} $W\subset \Gamma_{\le \alpha}\cdot
{{\sf k}}_{DP}[L_p]$;
\par
\end{enumerate}
\item\label{satinvsy(iii)}
\begin{enumerate}[\rm(a)]
\item\label{satinvsys(iii)(a)} With the conditions (iia) and (iib) above, and $\forall j,\,L_p^{[j]}\in W_j$, and \par
\item\label{satinvsys(iii)(b)} $\forall j,\, W_j=R_\alpha \circ (W_{\alpha}\cdot_{rp}L_p^{[j]})$.\par
\end{enumerate}

\end{enumerate}
\end{proposition}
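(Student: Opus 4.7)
This proposition is the single-point-support analog of Proposition~\ref{satinvsys}. My plan is to establish the equivalences (i)$\,\Leftrightarrow\,$(ii)$\,\Leftrightarrow\,$(iii) and the identity $W=L_\Z$ by assembling earlier results of the section.

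The forward direction --- that $W=L_\Z$ implies each of (i), (ii), (iii) --- is essentially a compilation. Condition (i) is the inverse-system translation of the ideal containments $m_p\supset I_\Z\supset m_p^{\alpha+1}$, which is exactly Lemma~\ref{homogdefp}\,\eqref{econcentatp} with $u=\alpha+1$. Parts (a) and (b) of (ii) are Proposition~\ref{satinvsys}\eqref{satinvsysi(i)}--\eqref{satinvsysi(ii)} applied to the saturated ideal $I_\Z$, and (ii)(c) is the right-hand inclusion of~(i). For (iii)(a), $L_p^{[j]}\in W_j$ follows from ${{\sf k}}_{DP}[L_p]\subset W$; for (iii)(b), Theorem~\ref{Homogp} equation~\eqref{Homogpeq} gives the generation $W_j=R_\alpha\circ(V\cdot_{rp}L_p^{[j]})$ for any $V=\Homog(V'',L_p,\alpha)$ with $V''$ generating the translated local inverse system, and taking $V''=L''_{\le\alpha}$ identifies $V$ with $W_\alpha$; the inclusion ``$\supset$'' uses the translated form of Lemma~\ref{closerp} to ensure $W_\alpha\cdot_{rp}L_p^{[j]}\subset W_{\alpha+j}$.

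For the reverse direction I plan to establish the chain (iii)$\,\Rightarrow\,$(ii)$\,\Rightarrow\,$(i)$\,\Rightarrow\,W=L_\Z$. The implication (iii)$\,\Rightarrow\,$(ii)(c) holds because each element of $R_\alpha\circ(W_\alpha\cdot_{rp}L_p^{[j]})$ is of the form $\sum w_i\cdot L_p^{[k-i]}$ with $\deg w_i\le\alpha$, hence lies in $\Gamma_{\le\alpha}\cdot{{\sf k}}_{DP}[L_p]$. For (ii)$\,\Rightarrow\,$(i), Proposition~\ref{satinvsys} applied to (ii)(a),(b) supplies a zero-dimensional scheme $\Z_0$ with $W=L_{\Z_0}$ of degree $s$ and regularity $\sigma$; condition (ii)(c) combined with Lemma~\ref{homogdefp} forces $\Z_0$ to be concentrated at~$p$; and then $I_{\Z_0}\subset m_p$ dualizes via Lemma~\ref{pairVS} to $W\supset (m_p)^\perp={{\sf k}}_{DP}[L_p]$, yielding~(i). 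Finally, (i) combined with Lemma~\ref{homogdefp} and Proposition~\ref{AuxMainL}\eqref{is6} (transported from $p_0$ to $p$ via the translation of Lemma~\ref{transforma}) shows $W=L_\Z$, with the prescribed invariants $s$ and $\sigma$ pinning down the scheme.

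The main obstacle I anticipate is the step (iii)$\,\Rightarrow\,$(ii)(b): extracting the saturation identity $R_{n-i}\circ W_n=W_i$ from the single generation statement at level $\alpha$. My strategy is to use (iii)(a) to guarantee $L_p^{[n]}\in W_n$ for every $n$ (so that $L_p$ acts as a non-zero divisor on $\Ann W$ in the precise sense of Lemma~\ref{sat}), and then to chain the raised-power closure $W_\alpha\cdot_{rp}L_p^{[u]}\subset W_{\alpha+u}$ with contraction by suitable elements of $R_{n-i}$ to obtain surjectivity of $R_{n-i}\circ\;:W_n\to W_i$ for $n\ge\max\{i,\sigma\}$.
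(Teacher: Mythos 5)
Your proposal is correct and follows essentially the same route as the paper: the saturation/dimension conditions (ii)(a),(b) are handled by Proposition~\ref{satinvsys}, the containment (ii)(c) (equivalently (i)) is the support-at-$p$ criterion of Lemma~\ref{homogdefp}, the generation statement (iii)(b) comes from Theorem~\ref{Homogp}, and the specific bound $\alpha$ and the passage back to $W=L_\Z$ use Proposition~\ref{AuxMainL} transported from $p_0$ to $p$ by the change of coordinates of Lemma~\ref{transforma}. The one place you diverge is the ``main obstacle'' you flag, namely extracting the saturation identity $R_{n-i}\circ W_n=W_i$ from (iii); but this obstacle is illusory. By its wording, condition (iii)(a) explicitly incorporates conditions (ii)(a) and (ii)(b) (``with the conditions (iia) and (iib) above\dots''), so the implication (iii)$\Rightarrow$(ii) requires only the easy observation that (iii)(b) forces $W\subset\Gamma_{\le\alpha}\cdot{{\sf k}}_{DP}[L_p]$, i.e.\ (ii)(c) --- exactly the step the paper dismisses as evident and which you also prove. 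Your sketched workaround (treating $L_p$ as a non-zero divisor in the sense of Lemma~\ref{sat} and chaining raised-power closures) is therefore unnecessary and can simply be deleted; with that reading the rest of your argument closes the loop exactly as the paper does.
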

\begin{proof} The condition (\ref{satinvsys(ii)(b)}) above implies
the corresponding condition of Proposition \ref{satinvsys}, so
(\ref{satinvsys(ii)(a)}),(\ref{satinvsys(ii)(b)}) are
equivalent to $I=\Ann (W)$ being the saturated ideal defining a
degree-$s$ zero-dimensional scheme $\Z\subset \mathbb P^n$. The
third condition (\ref{satinvsys(ii)(c)}) is that of Lemma
\ref{homogdefp}, and assures that $\Z$ has support the point $p$.
The specific bound $\alpha$ arises from the change of coordinates of
Lemma \ref{transforma} applied to the formulas $L_j=L_\alpha [j]$
and $L\subset \Gamma_{\le \alpha}\cdot {{\sf k}}[Z]$ -- note that ${{\sf k}}[Z]=
{{\sf k}}_{DP}[Z]$ of Proposition \ref{AuxMainL} (\ref{is3}). Thus, the
hypotheses on $\Z$ imply the first condition (\ref{satinvsys(ii)})
and conversely.  That the hypotheses imply (\ref{satinvsys(iii)(b)})
follows from Theorem \ref{Homogp}. Evidently, (\ref{satinvsys(iii)(b)})  implies
(\ref{satinvsys(ii)(c)}).
\end{proof}

\subsection{Schemes with finite support.}\label{schfinsup}
We now combine the results of previous sections, to determine the
inverse system of schemes concentrated at several points. We will
assume that coordinates are chosen so that any zero-dimensional
scheme $\Z$ considered lies entirely within the affine chart
$\mathbb A^n$ where $ x_{n+1}\not= 0$. Let $p_1,\ldots,p_k$ be
distinct points in $\mathbb P$ with $x_{n+1}\neq 0$ and $\Z(u)$ be a
degree-$s_u$ zero-dimensional scheme concentrated at $p_u$ with
local socle degree $\alpha(u)=\alpha(\Z(u))$, for $1\le u\le k$
(Definition \ref{socled}). Recall that we denote by
 $m_{p_u}\subset R$ the homogeneous ideal of
 $p_u=(a_{u_1}:...:a_{u_n}:1)$,
 for $1\le u\le k$.
Let $\Z=\cup_{u=1}^k \Z(u)$.
\begin{proposition} $I_\Z$ is the saturated ideal defining $\Z$
if and only if
\begin{equation}\label{int}
m_{p_1}\cap \cdots \cap m_{p_k}\supset I_\Z\supset m_{p_1}^{\alpha
(1)+1}\cap \cdots \cap m_{p_k}^{\alpha (k)+1}.
\end{equation}
The inverse system $L$ is that of such a scheme if and only if it is
saturated (Lemma \ref{sat}, equation \eqref{esat2}), and
\begin{equation}\label{sum2}
  \langle L_{p_1}^{[i]},\ldots ,L_{p_k}^{[i]} \rangle \subset L_i\subset
\langle \Gamma_{\alpha (1)}L_{p_1}^{[i-\alpha (1)]},\ldots
,\Gamma_{\alpha (k)}L_{p_k}^{[i-\alpha (k)]}\rangle
\end{equation}
\end{proposition}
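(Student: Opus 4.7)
\proofof{Proposition (sketch)}
The plan is to deduce both the ideal-side statement and the inverse-system statement by combining primary decomposition with the point-by-point characterizations already in hand (Lemma \ref{homogdefp} and equation \eqref{econcentatp}), and then to observe that the inverse-system statement is simply the degree-by-degree Matlis dual of the ideal statement.

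For the forward direction on ideals, since $\Z=\cup_u\Z(u)$ is a disjoint union of components each supported at a distinct point, the saturated defining ideal satisfies $I_\Z=\cap_u I_{\Z(u)}$. By Lemma \ref{homogdefp} applied at each point, the $m_{p_u}$-primary ideal $I_{\Z(u)}$ satisfies $m_{p_u}\supset I_{\Z(u)}\supset m_{p_u}^{\alpha(u)+1}$. Intersecting these inclusions over $u$ gives \eqref{int}. For the converse, suppose a saturated $I$ satisfies \eqref{int}. The upper bound forces $V(I)\subset\{p_1,\ldots,p_k\}$ and the lower bound forces $V(I)\supset\{p_1,\ldots,p_k\}$, so $\mathrm{rad}(I)=\cap m_{p_u}$. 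Since $I$ is saturated, no $M$-primary component appears, so the primary decomposition has the form $I=\cap_u I(u)$ with each $I(u)$ being $m_{p_u}$-primary. Localizing \eqref{int} at each $p_u$ (the other maximal ideals become units) yields $m_{p_u}\supset I(u)\supset m_{p_u}^{\alpha(u)+1}$, so by Lemma \ref{homogdefp} each $I(u)$ defines a scheme $\Z(u)$ concentrated at $p_u$ with $\alpha(\Z(u))\le\alpha(u)$. Thus $I=I_{\Z}$ with $\Z=\cup_u\Z(u)$ of the required form.

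For the inverse system statement, I would pass through Matlis duality (Lemma \ref{pairVS}(iii) and Lemma \ref{sat}): on each graded piece, intersection of ideals is dual to the sum of the corresponding perpendicular subspaces, giving
\begin{equation*}
\bigl(\textstyle\bigcap_u I(u)\bigr)^{\perp}\cap\Gamma_i \;=\;\sum_u (I(u))^{\perp}\cap\Gamma_i.
\end{equation*}
Applying Lemma \ref{homogdefp}, equation \eqref{econcentatp}, component by component shows that $(m_{p_u})^{\perp}\cap\Gamma_i=\langle L_{p_u}^{[i]}\rangle$ and $(m_{p_u}^{\alpha(u)+1})^{\perp}\cap\Gamma_i=\Gamma_{\alpha(u)}\cdot L_{p_u}^{[i-\alpha(u)]}$. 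Summing over $u$ translates the pair of inclusions \eqref{int} into precisely the pair of inclusions \eqref{sum2} for $L=L_\Z$. The saturation hypothesis on $L$ corresponds (Lemma \ref{sat}, equation \eqref{esat2}) to $I_W=\Ann(L)$ being saturated, so the converse direction on inverse systems reduces to the converse direction already proved for ideals.

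The main obstacle is bookkeeping rather than substance: one must verify that the intersection-to-sum duality works degree by degree on $\Gamma$, and that localization correctly strips away the other primary components in the converse so that \eqref{int} restricts to the expected local inclusions at each $p_u$. Both are routine using the results cited, but they are the places where care is needed to ensure the $\alpha(u)$ bounds transfer cleanly between the global ideal and the local components.
\endproof
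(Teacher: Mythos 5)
Your argument is correct and follows essentially the same route as the paper's (much terser) proof: condition \eqref{int} is read as the statement that the primary decomposition of the saturated ideal is supported at $p_1,\ldots,p_k$ with the prescribed local bounds, and \eqref{sum2} is obtained as its degreewise dual via \eqref{mpsannih} (Lemma \ref{homogdefp}), with Lemma \ref{sat} handling saturation. One cosmetic slip in your converse: the containment $I\subset m_{p_1}\cap\cdots\cap m_{p_k}$ is what forces $V(I)\supset\{p_1,\ldots,p_k\}$, while $I\supset m_{p_1}^{\alpha(1)+1}\cap\cdots\cap m_{p_k}^{\alpha(k)+1}$ forces $V(I)\subset\{p_1,\ldots,p_k\}$ (you stated these the other way around), but the conclusion $\mathrm{rad}(I)=m_{p_1}\cap\cdots\cap m_{p_k}$ and the rest of the argument are unaffected.
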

\begin{proof} The condition \eqref{int} is the condition for the primary decomposition of $I_\Z$
to have $p_1,\ldots ,p_k$ as the associated points; the condition \eqref{sum2} is its translation
by \eqref{mpsannih} (see also Lemma \ref{homogdefp}).
\end{proof}

Further, let $I(1),...,I(k)$ be saturated ideals defining
$\Z(1),...,\Z(k)$ and let $L, L(1),...,L(k)$ be the global inverse
system of $I_{\Z},I(1),...,I(k)$ respectively. Recall that
$\tau(\Z)=\sigma(\Z)-1$.
\begin{theorem}{\sc Decomposition of the inverse system of a punctual scheme.}\label{decompIS}
With $\Z$, $\Z(u)$ and $\alpha(u)$ ($1\le u \le k$) as above, we
denote the regularity degree of $\Z$ by $\sigma$, and that of each
$\Z(u)$ by $\sigma (u)$ for $1\le u\le k$ and set $\Z'(u)=\Proj (
R/( m_{p_u}^{\alpha(u)+1}\cap (I(1)\cap \cdots \cap
\widehat{I(u)}\cap \cdots \cap I(k)))$. Then we have,
\begin{enumerate}[\rm(i)]
\item\label{sum} $L=L(1)+\cdots +L(k).$ \par
\item\label{directs} When $i\ge \sigma -1$, then $L_i=L(1)_i\bigoplus \cdots \bigoplus
L(k)_i ,$ and $I(1)_i,\ldots ,I(k)_i$ intersect properly in $R_i$.
\par
\item\label{summend} $L(u)_i\subset L_i\cap (\Gamma_{\alpha (u)} \cdot L_{p_u}^{[i-\alpha
(u)]})$, with equality for $i\ge \min\{ i\mid \dim( L_i\cap
(\Gamma_{\alpha (u)} \cdot L_{p_u}^{[i-\alpha (u)]}))=s_u\}$.
Certainly there is equality for $i\ge \tau(\Z'(u))$.  Also
$L(u)_i=R_{j-i}\circ L(u)_j$ if $i\le j$ and $j\ge \sigma (u)$.
\par
\end{enumerate}
\end{theorem}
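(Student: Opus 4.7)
The plan for (\ref{sum}) is to invoke Macaulay's correspondence (Lemma \ref{sat}) on the primary decomposition $I_\Z = I(1)\cap\cdots\cap I(k)$, using the elementary identity $\Ann\bigl(\sum_u L(u)\bigr) = \bigcap_u \Ann(L(u))$. This gives
\[
\Ann\Bigl(\sum_u L(u)\Bigr) = \bigcap_u I(u) = I_\Z = \Ann(L),
\]
and the bijection between graded ideals and homogeneous inverse systems yields $L = \sum_u L(u)$.

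For (\ref{directs}) I would argue by a dimension count. Theorem \ref{zero-dim}(\ref{zeroi}) gives $\dim_{\sf k} L_i = s$ for $i \ge \sigma-1$. Dualizing the natural injection $R/I_\Z \hookrightarrow \prod_u R/I(u)$ coming from the primary decomposition yields
\[
s = H(R/I_\Z)_i \le \sum_u H(R/I(u))_i \le \sum_u s_u = s,
\]
so every inequality is an equality; hence $\dim L(u)_i = s_u$ for each $u$ in that range, and $\dim L_i = s = \sum_u \dim L(u)_i$ forces the sum of (\ref{sum}) to be direct. Dually, this equality of Hilbert functions is precisely the proper intersection of the $I(u)_i$ inside $R_i$.

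For (\ref{summend}) the inclusion $L(u)_i \subset L_i \cap (\Gamma_{\alpha(u)}\cdot L_{p_u}^{[i-\alpha(u)]})$ is immediate from (\ref{sum}) together with Lemma \ref{homogdefp} characterizing inverse systems concentrated at $p_u$. For the reverse inclusion in high degrees, pick a linear form $\ell\in m_{p_u}$ with $\ell(p_v)\ne 0$ for $v\ne u$; such $\ell$ exists since the support points are distinct. Then $\ell$ is a non-zero divisor for $R/I(v)$, so by Lemma \ref{sat}\eqref{e1nzd} the contraction $\ell\circ$ is surjective on $L(v)$, and combined with the stability $\dim L(v)_j = s_v$ for $j\ge\tau(\Z(v))$ it is bijective in high degrees. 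Given $f$ on the right-hand side, equation \eqref{mpsannih} yields $\ell^{\alpha(u)+1}\circ f=0$; writing $f=\sum_v f_v$ via the direct sum of (\ref{directs}) and applying $\ell^{\alpha(u)+1}\circ$, directness in the target degree together with injectivity of $\ell^{\alpha(u)+1}\circ$ on each $L(v)$, $v\ne u$, forces $f_v=0$, so $f=f_u\in L(u)_i$. The final identity $L(u)_i = R_{j-i}\circ L(u)_j$ for $i\le j$ is precisely the saturation relation \eqref{esat2} of Lemma \ref{sat} applied to the saturated ideal $I(u)$.

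The main obstacle I anticipate is attaining the sharp bound $i\ge\tau(\Z'(u))$ advertised in (\ref{summend}); the crude non-zero-divisor argument above only yields something like $i\ge\tau(\Z)+\alpha(u)+1$. Sharpening will likely require recognizing that $L_i\cap(\Gamma_{\alpha(u)}\cdot L_{p_u}^{[i-\alpha(u)]}) = (I_\Z + m_{p_u}^{\alpha(u)+1})^\perp_i$ and showing that the ideal $I_\Z + m_{p_u}^{\alpha(u)+1}$ agrees in degree $i$ with its saturation $I(u)$ exactly once the complementary scheme $\Z'(u)$ has stabilized at $\tau(\Z'(u))$.
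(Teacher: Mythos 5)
Your treatment of (i), (ii), the inclusion in (iii), and the closing identity $L(u)_i=R_{j-i}\circ L(u)_j$ is correct and is essentially the paper's own route: (i) via exactness of the pairing/Macaulay duality applied to $I_\Z=\bigcap_u I(u)$, (ii) via the squeeze $s=H(R/I_\Z)_i\le \sum_u H(R/I(u))_i\le \sum_u s_u=s$ for $i\ge\sigma-1$, and the inclusion in (iii) via $I(u)\supset I_\Z+m_{p_u}^{\alpha(u)+1}$ together with \eqref{mpsannih}; the last identity is Lemma \ref{sat}. Your alternative argument for the reverse inclusion (a linear form $\ell\in m_{p_u}$ not vanishing at the other support points, surjectivity of $\ell\circ$ on each $L(v)$ by the non-zero-divisor criterion, hence injectivity of $\ell^{\alpha(u)+1}\circ$ in stable degrees) is sound as far as it goes.

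The genuine gap is the one you flag yourself: that argument only yields equality in (iii) for roughly $i\ge\tau(\Z)+\alpha(u)+1$, whereas the theorem asserts it already for $i\ge\tau(\Z'(u))$, and the bound $\tau(\Z'(u))$ is exactly what is used later (e.g.\ in the remark on recovering the primary decomposition), so it cannot be weakened silently. The paper closes this by applying part (ii) to the auxiliary scheme $\Z'(u)$ itself, whose irreducible components are the fat point $\Proj(R/m_{p_u}^{\alpha(u)+1})$ and the $\Z(v)$, $v\ne u$. Concretely: for $i\ge\tau(\Z)$ write any $f\in L_i\cap(\Gamma_{\alpha(u)}\cdot L_{p_u}^{[i-\alpha(u)]})$ as $f=f_u+g$ with $f_u\in L(u)_i$ and $g\in L'(u)_i=\sum_{v\ne u}L(v)_i$, using the direct sum of (ii); since $L(u)_i\subset(m_{p_u}^{\alpha(u)+1})^{\perp}_i$, the element $g=f-f_u$ also lies in $(m_{p_u}^{\alpha(u)+1})^{\perp}_i=\Gamma_{\alpha(u)}\cdot L_{p_u}^{[i-\alpha(u)]}$, so $g\in L'(u)_i\cap(\Gamma_{\alpha(u)}\cdot L_{p_u}^{[i-\alpha(u)]})$. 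For $i\ge\tau(\Z'(u))$ this intersection is zero, because (ii) applied to $\Z'(u)$ makes the degree-$i$ inverse systems of its components, namely $(m_{p_u}^{\alpha(u)+1})^{\perp}_i$ and the $L(v)_i$, sum directly; equivalently, $(m_{p_u}^{\alpha(u)+1})_i$ and $\bigl(I(1)\cap\cdots\cap\widehat{I(u)}\cap\cdots\cap I(k)\bigr)_i$ intersect properly, so $(I_\Z+m_{p_u}^{\alpha(u)+1})_i=I(u)_i$ --- which is precisely the sharpening you sketch at the end of your proposal. (One small point to record when doing this: $\Z\subset\Z'(u)$, so $\tau(\Z'(u))\ge\tau(\Z)$ --- compare the quotients by a common general linear non-zero divisor --- hence the decomposition from (ii) for $\Z$ is indeed available in the degrees considered.) So your plan is correct except for this threshold, and your proposed repair, once carried out via (ii) for $\Z'(u)$, is the paper's argument.
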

\proof First, (\ref{sum}) follows from the exactness of the action
of $R_i$ on $\Gamma _i$; the perpendicular space in $\Gamma _i$ to
an intersecton $I(1)_i\cap \cdots \cap I(k)_i$ is the sum $L(1)_i+
\cdots + L(k)_i$. That the sum is direct when $i\ge \sigma -1$
arises from $H(R/I_\Z)_i=s=\sum_u s_u=\sum_u H(R/I_{\Z_u})_i$ when
$i \ge \sigma -1$ and this shows the first statement of
(\ref{directs}), which is equivalent by duality to the second. The
inclusion in (\ref{summend}) arises from the inclusion $I(u)\supset
I_\Z+ M(p_u)^{\alpha (u) +1}$ by duality, using Equation
\eqref{mpsannih}. When $i\ge \tau (\Z'(u))$ we have that
$(M(p_u)^{\alpha (u) +1})_i$ and $\left( I(1)\cap \cdots \cap
\widehat{I(u)}\cap \cdots \cap I(k)\right)_i $ intersect properly in
$R_i$ by (\ref{directs}), whence it is not hard to show
$I(u)_i=(I_\Z+M(p_u)^{\alpha (u)+1})_i$. Here is a proof: let
$L'(u)=L(1)+ \cdots + \widehat{L(u)} + \cdots + L(k)$. Then
\begin{equation} L_i\cap (\Gamma_{\alpha (u)} \cdot L_{p_u}^{[i-\alpha
(u)]})= (L'(u)+L(u))_i \cap (\Gamma_{\alpha (u)} \cdot
L_{p_u}^{([i-\alpha (u)]}) = (L(u)_i+K_i),
\end{equation}
where, when $i\ge \tau$ we may assume $K_i\subset L'(u)_i$, since
the sum $L'(u)_i+ L(u)_i$ is then direct; but when $i\ge
\tau(\Z'(u))$ we must have $K_i=0$.  The last statement follows from
Lemma \ref{sat}.
\endproof
\begin{remark} Let the degree $s$ of a zero-dimensional scheme $\Z\subset \mathbb P^n$ be given, also an
upper bound $N\ge \sigma(\Z)$ on the regularity degree. Suppose
that we can calculate the inverse system $(L_\Z)_i$ in any degree. We
may find the primary decomposition of $I_\Z$ as follows:  first,
determine the points $p_u$ of support by testing which powers
$L_{p_u}^{[N]}\in (L_\Z)_N$. Following Theorem \ref{decompIS}
\eqref{summend}, then choose $i\ge s+\dim_{\sfÊk}( R/m^{s+1})$ and form
the intersection $L(u)_i=(L_\Z)_i\cap (\Gamma_{s} \cdot
L_{p_u}^{[i-s]})$, from which $I(u)$ can be determined (see Example
\ref{exlink}). However, this may require working in a high degree.
 Theorem \ref{decompIS} \eqref{directs} shows that $(L_\Z)_N$ contains $L(u)_N$ as a direct summend: can we determine $L(u)_N$ from $(L_\Z)_N$?
\end{remark}
\begin{remark}\label{critagor}{\it Determining when $\Z$ is arithmetically Gorenstein.}
A Gorenstein Artin local algebra has a unique minimum length ideal,
its socle, of dimension one as ${{\sf k}}$-vector space. Thus if $\Z$ is a
zero-dimensional locally Gorenstein scheme in $\mathbb P^n$, each
irreducible component $\Z_i$ has a unique proper subscheme of degree
one less than $\Z_i$ and we denote by $\Z'_i$ its union with the
remaining components. To use the Cayley-Bacharach ($\CB$) criterion
(Example \ref{CayleyB}) for a Gorenstein zero-dimensional scheme with
$k$ irreducible components, one needs to check the Hilbert function
for the $k$ different subschemes $\Z'_1,\ldots ,\Z'_k$: the $\CB$
criterion is that (with $\tau=\tau(\Z)$)
\begin{equation}\label{CBcriteq}
H(R/I_{\Z
'_k})_{\tau-1}=H(R/I_{\Z})_{\tau-1} \text{ for each } \Z'_i, i=1,\ldots ,k.
\end{equation}
 We have seen in Example
\ref{CayleyB} that when $\Z$ is local, not $\it{conic}$, but $\Z'$
is conic, then $\Z$ fails the $\CB$ criterion. Since being
arithmetically Gorenstein is a global property, there are no local
criteria for it. Nevertheless, the above equation \eqref{CBcriteq},
or even the inverse system can be used to check the $\CB$ criterion,
as we illustrate in the next example.
\end{remark}
\begin{example}\label{critagorex1} {\it Non AG scheme.} Suppose $R={{\sf k}}[x_1,x_2,x_3,z]$, and
$\Gamma={{\sf k}}_{DP}[X_1,X_2,X_3,Z]$, let $I_\Z={m_p}\cap I(2)$, where
$m_p=(x_1-z,x_2-z,x_3-z)$, the maximal ideal at $p=(1,1,1,1)$, and
$I(2)=(x_1,x_2^2,x_3^2)$, a complete intersection concentrated at
$p_0=(0:0:0:1)$. Then $\Z=\Z(1)\cup \Z(2)\subset \mathbb P^3$ with
$\Z(1)=p,\ \Z(2)=\Proj(R/I(2))$, and
\begin{equation*}
I=I_\Z=(x_1^2-x_1x_3, x_1x_2-x_1x_3, x_1x_3-x_1z, x_2^2-x_1z,
x_3^2-x_1z)
\end{equation*}
$H_\Z=(1,4,5,5,\ldots )$, with $\tau(\Z)=2$. A calculation shows $\Z
'(2)=\Proj(R/I'(2))$, where $ I'(2)=m_p\cap (I(2))$, has Hilbert
function $H'(2)=(1,4,4,\ldots )$, satisfying the criterion, but
$\Z'(1)=\Z(2)$, of Hilbert function $H'(1)=H(2)= (1,3,4,\ldots )$,
so $\Z$ is not arithmetically Gorenstein. \par This can be seen
using the inverse systems as follows: taking
$W=L_\Z=(I_\Z)^{-1},W(1)=L_{\Z(1)},W(2)=L_{\Z(2)},
L_p=X_1+X_2+X_3+Z$, we have
\begin{equation*} W_j=W(1)_j+W(2)_j=L_p^{[j]}+\langle
X_2Z^{[j-1]},X_3Z^{[j-1]},X_2X_3Z^{[j-2]},Z^{[j]}\rangle .
\end{equation*}
The inverse system $W'(2)$ to $I_{\Z' (2)}$ is obtained by removing
from $W_j$ the $\it{generator}$ $X_2X_3Z^{[j-2]}$ of $W(2)$, not
affecting $\dim_{\sfÊk} W'(2)_1=4$. The dual module $W'(1)$ to
$I'_{\Z'(1)}$ is obtained by removing from $W_j$ the
$\it{generator}$ $L_p^{[j]}$, of $W(1)$ which gives $\dim_{\sfÊk}
W'(1)_1=3$, not 4, as required by the Cayley-Bacharach criterion
\eqref{CBcriteq}.
\end{example}
\begin{remark}{\it Regularity degree.} When $\Z$ is concentrated at a
single point  we showed that the regularity and local socle degree
are related by $\sigma (\Z)\le \alpha (\Z)+1$ (see Proposition
\ref{AuxMainL} (\ref{is5})). This result cannot extend to arbitrary
zero-dimensional schemes. When the degree-$s$ scheme $\Z$ is smooth,
we have $\alpha (\Z)=0$, but $H_\Z$ can be any sequence such that
$\Delta H_\Z$ is an $O$-sequence of length-$s$, by Theorem
\ref{zero-dim} (\ref{zeroiv}). Since for a punctual scheme $\Z$,
$\sigma(\Z)=1+\tau(\Z)$, with $\tau(\Z)=\max\{ i\mid (\Delta
H_\Z)_i\not=0\}$, the maximum regularity degree is $s$, which occurs when $\Delta
H_\Z=(1,1,\ldots ,1)$. Even the degree $\tau$ component of the ideal
$I_\Z$ or of the inverse system $I_\Z^{-1}$ may be far from
determining the support of $\Z$. For example, if $s=2$,
the smooth scheme $\Z=(1:0:1)\cup (0:0:1)\subset \mathbb P^2$ has
inverse system $I^{-1}$ satisfying $(I^{-1})_i=(I_\Z^{-1})_i=\langle
Z^{[i]},(X+Z)^{[i]}\rangle$, $\Delta H_\Z=(1,1)$, so $\tau(\Z)=1$.
But the degree-$\tau$ component of the inverse system,
$(I^{-1})_1=\langle Z,X+Z\rangle$, only restricts the two points of
$\Z$ to lie on the line $y=0$.\par There has been much study of
regularity questions for zero-dimensional schemes. For example M.
Chardin and P. Philippon show that if there are forms $f_1,\ldots
,f_n$ of degrees $d_1,\ldots ,d_n$ in $\mathbb P^n$, such that
$f_1=\cdots =f_n=0$ contains $\Z$, and they form a local complete
intersection (LCI) at each support point of $\Z$, then the
regularity degree of $\Z$ is at most $d_1+\cdots +d_n-n$
\cite[Theorem A]{CharP}. LCI schemes $\Z$ occur naturally in both
singularity theory  (see \cite{Mi})and also in the study of certain
hyperplane arrangements (see \cite{Schk}). It could be of interest
to explore such zero-dimensional schemes from an inverse system
point of view. However, to detect CI or LCI from the inverse system
is not so easy, and it is rather simpler to detect if $\Z$ is Gorenstein.
\end{remark}
We give the following basic result bounding the regularity degree of $\Z$ in terms of the socle degrees of
the irreducible components of $\Z$, when the number of components is small.  We say that $k$ points in
$\mathbb P^n$ are in (linearly) general position if each subset of
$s$ points spans a $\mathbb P^{s-1}$, for $s\le n+1$.
\begin{proposition}\label{regbound} Let $\Z$ be a zero-dimensional scheme, supported at $p(1),\ldots ,p(k)\subset
\mathbb P^n$, and suppose the socle degrees of the irreducible components $\Z(1),\ldots ,\Z(k)$ are
$\alpha(1)\le
\cdots
\le
\alpha(k)$. If
$k\le n+2$ and the
$k$ points are in linearly general position, then the
regularity degree
$\sigma(\Z)$ satisfies
\begin{equation}\label{eregbound}
\sigma(\Z)\le \alpha(k)+\alpha(k-1)+2.
\end{equation}
\end{proposition}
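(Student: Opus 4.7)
The plan is as follows. Write $s = \deg\Z = \sum_{u=1}^k s_u$ and aim to show $\dim_{\sf k}(L_\Z)_i = s$ for every $i \ge \alpha(k)+\alpha(k-1)+1$, which by Proposition~\ref{satinvsys} is equivalent to $\tau(\Z) \le \alpha(k)+\alpha(k-1)+1$ and hence to the claimed bound on $\sigma(\Z)$. By Theorem~\ref{decompIS}(i) the inverse system decomposes as $L_\Z = L(1)+\cdots +L(k)$, and by Proposition~\ref{AuxMainL}(v) applied to each concentrated component $\Z(u)$, $\dim_{\sf k} L(u)_i = s_u$ for every $i\ge \tau(u)$, in particular for $i\ge \alpha(u)$. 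Since $\max_u \alpha(u) = \alpha(k) \le \alpha(k)+\alpha(k-1)+1$, each component already attains its maximal dimension in the target range, so the only thing left to prove is that the sum $\sum_u L(u)_i$ is direct for $i\ge \alpha(k)+\alpha(k-1)+1$.

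By Lemma~\ref{pairVS}, directness of this sum in $\Gamma_i$ is equivalent to the codimension identity $\operatorname{codim}_{R_i}(I_\Z)_i = \sum_u \operatorname{codim}_{R_i}I(u)_i$, which in turn amounts to the surjectivity of the $k$-fold evaluation map $R_i \to \bigoplus_u (R/I(u))_i$. Because each $I(u)$ contains the fat-point ideal $J_u := m_{p(u)}^{\alpha(u)+1}$, the quotient $(R/I(u))_i$ is a quotient of $(R/J_u)_i$, and it therefore suffices to establish the analogous surjection for the pure fat-point ideals:
\begin{equation*}
\rho_i : R_i \longrightarrow \bigoplus_{u=1}^k (R/J_u)_i, \qquad i \ge \alpha(k)+\alpha(k-1)+1.
\end{equation*}

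For the pairwise case $k=2$, $\rho_i$ surjective means $J_1+J_2=R$ in degree $i$, a direct monomial check: after choosing coordinates placing $p(1), p(2)$ at coordinate vertices, every monomial of degree $i$ lies in $J_1+J_2$ once $i \ge \alpha(1)+\alpha(2)+1$, giving a clean base case and recovering Proposition~\ref{AuxMainL}(v) via the decomposition. For $k\ge 3$ I would proceed by induction on $k$: assuming the statement for the subset $\{p(1),\ldots ,\widehat{p(v)},\ldots,p(k)\}$, the remaining task is to produce, for each $v$, a form $g_v \in R_i$ lying in $\bigcap_{u\ne v}J_u$ whose image in $R/J_v$ has maximal rank — equivalently a form in $\bigcap_{u\ne v}J_u$ not vanishing at $p(v)$. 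Here the hypothesis $k\le n+2$ together with linearly general position enters decisively: since $k-1 \le n+1$, the remaining support points can be covered by $n$ hyperplanes in linearly general position that all miss $p(v)$, and raising each of these $n$ hyperplanes to a suitable power realises the required vanishing orders $\alpha(u)+1$ at the $p(u)$ for $u\ne v$. The pair-bound calculation shows that the two largest multiplicities $\alpha(k-1), \alpha(k)$ dominate and control the total degree, so the resulting $g_v$ has degree at most $\alpha(k)+\alpha(k-1)+1$.

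The main obstacle is precisely this combinatorial allocation: given the socle-degree data $\alpha(1)\le\cdots\le\alpha(k)$, arranging $n$ hyperplanes in $\mathbb P^n$ through prescribed subsets of the $k-1$ points (avoiding $p(v)$) with multiplicities summing to at most $\alpha(k)+\alpha(k-1)+1$ and producing the correct vanishing orders simultaneously. The hypothesis $k\le n+2$ is essential: with more points there would be too many independent vanishing constraints to absorb into only $n$ hyperplanes, and linearly general position is what licences the hyperplane-through-$n$-points construction on which the whole argument turns.
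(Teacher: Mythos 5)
Your global strategy coincides with the paper's skeleton: show $\dim_{\sf k}(L_\Z)_i=s$ for $i\ge \alpha(k)+\alpha(k-1)+1$ by combining $\dim_{\sf k}L(u)_i=s_u$ (valid since $\tau(\Z(u))\le\alpha(u)$) with directness of the sum, and reduce directness to the statement that the fat-point ideals $J_u=m_{p(u)}^{\alpha(u)+1}$ impose independent conditions in degree $i$. The paper proves that last statement dually: by \eqref{mpsannih} the inverse system of $\cap_u J_u$ in degree $i$ is $\sum_u\Gamma_{\alpha(u)}\cdot L_{p(u)}^{[i-\alpha(u)]}$, whose dimension is read off from the Hilbert function of the ideal generated by the powers $\ell_1^{i-\alpha(1)},\ldots ,\ell_k^{i-\alpha(k)}$ of the dual linear forms --- a complete intersection for $k\le n+1$ (the Koszul resolution gives exactly the threshold $i>\alpha(k)+\alpha(k-1)$) and an almost complete intersection for $k=n+2$, where Stanley's result (\cite[Lemma C]{I5}) is invoked. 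You instead try to exhibit explicit separating forms, and there the first genuine gap appears: ``image in $R/J_v$ has maximal rank --- equivalently a form in $\cap_{u\ne v}J_u$ not vanishing at $p(v)$'' is not an equivalence. A single degree-$i$ form gives one element of $(R/J_v)_i$, a space of dimension $\binom{n+\alpha(v)}{n}$; what is needed is that $(\cap_{u\ne v}J_u)_i$ surjects onto $(R/J_v)_i$. The standard repair is to find $g_v\in\cap_{u\ne v}J_u$ with $g_v(p(v))\ne 0$ of degree $d_v\le i-\alpha(v)$ and use $g_v\cdot R_{i-d_v}$, since multiplication by a unit is bijective on $\mathcal O_{p(v)}/m_{p(v)}^{\alpha(v)+1}$ and $R_{i-d_v}$ surjects onto it once $i-d_v\ge\alpha(v)$; note your stated degree bound $\deg g_v\le\alpha(k)+\alpha(k-1)+1$ is all of $i$ at the threshold and leaves no such slack. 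With the correct budget the construction does work for $k\le n+1$, and more simply than your $n$-hyperplane covering: one hyperplane through the $\le n$ points $p(u)$, $u\ne v$, missing $p(v)$ (this is where general position enters), raised to the power $\max_{u\ne v}\alpha(u)+1$, has degree $\le i-\alpha(v)$ precisely when $i\ge\alpha(k)+\alpha(k-1)+1$.

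The second gap is fatal for the case you call decisive, $k=n+2$. Take $v=k$ and all $\alpha(u)=\alpha$: you would need a form of degree $\le i-\alpha(k)=\alpha+1$ vanishing to order $\alpha+1$ at the remaining $n+1$ points, i.e.\ a degree-$(\alpha+1)$ cone whose vertex contains $n+1$ linearly general points, which forces the form to vanish. This is not merely a defect of the method: the independence statement you reduced to is false at the threshold when $k=n+2$ and the $\alpha(u)$ are balanced. For four double points in $\mathbb P^2$ ($n=2$, all $\alpha(u)=1$) the fat points impose $12$ conditions on the $10$-dimensional space $R_3$, so they cannot be independent in degree $3=\alpha(k)+\alpha(k-1)+1$; taking the components $\Z(u)=\Proj(R/m_{p(u)}^2)$ themselves one gets $H_\Z=(1,3,6,10,12,12,\ldots)$, hence $\sigma(\Z)=5$, exceeding the asserted bound $4$. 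So the $k=n+2$ case requires a different ingredient than your hyperplane allocation (the paper appeals to the almost-complete-intersection Hilbert function of powers of linear forms), and even then the threshold needs more care than either your sketch or the paper's supplies in such balanced configurations; as written, your argument establishes the proposition only for $k\le n+1$, and only after the maximal-rank step is repaired as above.
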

\begin{proof} The inverse system $W\subset \mathcal  R$ for $m_{p(1)}^{\alpha(1)+1}\cap \cdots \cap
m_{p(k)}^{\alpha(k)+1}$ satisfies, by the the partial
derivative action of $R$ on $\mathcal R$ analogue \eqref{mpsannih},
$W_i=\< L(1)^{[i-\alpha(1)]},\ldots ,L(k)^{[i-\alpha(k)]}\>_i$. The
hypothesis that the points are in linearly general position, implies
that the ideal $(L(1)^{[i-\alpha(1)]},\ldots ,L(k)^{[i-\alpha(k)]})$
is a complete intersection when $k\le n+1$, and an almost complete
intersection when $k=n+2$. Using the Hilbert function of CI's, or a
result of R. Stanley (see \cite[Lemma C]{I5}) when $k=n+2$, we have
$\dim_{\sfÊk} W_i=\sum_u \dim_{\sfÊk} R_{\alpha(u)}$ if and only if $i<
i-\alpha(k)+i-\alpha(k-1)$, or $i\ge \alpha(k)+\alpha(k-1)+1$; for
such $i$ the sum $(L_{\Z(1)})_i+\cdots +(L_{\Z(k)})_i$ is direct,
since each $L_\Z(u)$ has $\tau(\Z(u))\le \alpha(u)$, and we have
$\tau(\Z)\le \alpha(k)+\alpha(k-1)+1$, implying \eqref{eregbound}.
\end{proof}

Analogous inequalities when $k\ge n+3$ can be shown in some special
cases, with the hypothesis that the points of support are
$\it{generic}$. However, the general problem of bounding
$\sigma(\Z)$ in terms of the $\alpha(u)$ is equivalent to the
interpolation problem, of determining the Hilbert function of higher
order vanishing ideals at the $k$ points. This problem is open in
general, unless $\alpha(u)\le 2$, or $k\le n+2$ (see
\cite{AH,Ch1,Ch2,I4}). When $k=6$ points on $\mathbb P^3$, there is
exceptional behavior; calculation for $\alpha = 3,4,\ldots $ shows
that if $\Z(u)=\Proj(R/m_{p(u)}^{\alpha +1}), u=1,\ldots ,6$, then
$\sigma(\Z)=2\alpha+3$.

\section{When can we recover the scheme $\Z$ from a dual form $F$?}\label{application} \par
When can a zero-dimensional scheme $\Z\subset \mathbb P^n$
be recovered from a general element $F$ in $\Gamma_j=(I_\Z)^{-1}_j$, the
degree-$j$ component of its inverse system? 
We begin with two examples, first of the scheme $\Proj (R/{m_p}^2)$,
which  cannot be so recovered, and, second, of a non-CM scheme
$\Z$--- having components of different dimension --- that can be
recovered. We then restate and prove our main result, giving a
sufficient condition when $\dim \Z = 0$ (Theorem \ref{main}). We
give several improvements in special cases,
and Corollary \ref{Thm1}, a consequence concerning subfamilies of
the parameter space $\mathbb P\mathbf{GOR}(T)$. In Section
\ref{link} we briefly describe linkage as viewed through the lens of
inverse systems, and in Section \ref{gad} we interpret our results
in terms of generalized additive decompositions of forms (Theorems
\ref{GADd3} and \ref{GAD3}).\par The following example is similar to
\cite[Example 5.10]{IK}.

\begin{example}\label{fatpt} {\it Non-recoverable scheme.}
On $\mathbb P^2$ with coordinate ring $ R={{\sf k}}[x,y,z]$,  consider the
non-Gorenstein ideal ${m_p}^2, p
= (0:0:1)$ which defines a degree-3 subscheme $\Z\subset \mathbb P^2$. Here
$I=I_\Z = (x^2,xy,y^2) \subset R={{\sf k}}[x,y,z]$, of Hilbert function $H(R/I_\Z)=(1,3,3,\ldots ,)$,
and
local Hilbert function
$H(R'/I')=(1,2)$. Thus $\tau (\Z)=1$, $\sigma (\Z)=2$, and  we have
\begin{equation}
(I_\Z)_i^{-1}\cap \Gamma _i = \{Z^{[i]},Z^{[i-1]}X,Z^{[i-1]}Y\}.
\end{equation}
Taking a general element $F=\alpha Z^{[j]}+\beta XZ^{[j-1]}+\kappa
YZ^{[j-1]}$, we find that $\Ann(F) $ contains  $\kappa x-\beta y$,
so we cannot recover the ideal $I_\Z$ from a single form $F$.
However, we can recover $I_\Z$ using two forms $F,G$, so from a
level algebra of type 2.
\end{example}

\begin{example}\label{linept} {\it Line with embedded point.} Let $R={{\sf k}}[x,y,z], \Gamma = {{\sf k}}_{DP}[X,Y,Z]$.
Consider
$F=XZ^{[3]}+Y^{[3]}Z
\in
\Gamma _4$. Then
$\Ann(F)= (x^2,xy,xz^2-y^3,z^4)$ defines an Artin algebra $R/\Ann (F)$ of Hilbert function
$T=(1,3,4,3,1)$. However,
 $\Ann(F)_{\le 2}=(x^2,xy)$, defines a scheme $\Z\subset \mathbb P^2$ consisting of a line with an
embedded point, whose Hilbert function satisfies $ H_\Z =
(1,3,4,5,6, \ldots )$. \par Taking instead $F_1=XYZ^{[2]}$, we find
$\Ann(F_1)=(x^2,y^2,z^3)$, also of Hilbert function $T$, and
$\Ann(F)_{\le 2}$ defines a degree-$4$ scheme $x^2=y^2=0$. More
generally let $\Z_1=\Proj(R/(g,h))$ be any complete intersection
scheme concentrated at $p_0\in \mathbb P^2$, of local Hilbert
function $H(R'/(g,h))=(1,2,1)$, and let $f_1\in \Gamma '$ be a
generator of the local inverse system $F=f_1Z^{[2]}$. Then it is
easy to see directly or by Corollary \ref{nascboij} that
$\Ann(F)_{\le 2}=(g,h)$, so determines $\Z_1$.
\end{example}
\begin{remarkWONum} {\sc Nonexistence of a morphism from $\Gor(T)$ to the Hilbert scheme of points.} Example
\ref{linept} shows that when $T=(1,3,4,3,1)$, it is not possible to
define a morphism from all of $\mathbb P\mathbf{GOR}(T)$ (the family
of Gorenstein ideals of Hilbert function $T$, see Definition
\ref{tight} below) to the punctual Hilbert scheme $\mathbf{Hilb}^4
(\mathbb P^2)$ parametrizing degree-$4$ zero-dimensional subschemes
of $\mathbb P^2$. The above example also answers negatively a
question asked in \cite[p. 142]{IK}, whether $\Z$ locally Gorenstein
might be a necessary condition for $I_\Z$ to occur as the ideal
generated by the lower degree generators of a Gorenstein Artin
quotient of $R/I_\Z$ --- as here $\Z$ is not even Cohen-Macaulay.
The question of which $\Z$ occur is open, even when $\Z$ is
restricted to be pure zero-dimensional. See \cite[Remark 5.73 and
Chapter 6]{IK} for further discussion.
\end{remarkWONum}

\subsection{Recovering $\Z$: main results.}
We now show our main result about recovering the scheme $\Z$ from a
general element $F\in L_\Z$. Recall that for a zero-dimensional
degree-$s$ scheme $\Z\subset \mathbb P^n$ we denote by $\tau (\Z) =
\sigma (\Z)-1=\min\{ i\mid (H_\Z)_i = s\}$. We denote by $\alpha
(\Z)$ the maximum local socle degree of a component of $\Z$ (see
Definition \ref{socled}). We let $\beta (\Z )=\tau(\Z)+\max\{ \tau
(\Z ),\alpha (\Z )\}$, and $L_\Z=(I_\Z)^{-1}$. It is evident that
for any $F\in (L_\Z)_j$, we have $I_\Z\subset \Ann(F)$. We assumed
throughout the paper that $\cha \  {{\sf k}} = 0$, or $\cha \  {{\sf k}} = p >j$,
where $j$ is the maximum degree of any form considered, here the
degree of $F$ (see Example \ref{char} for the necessity of this
assumption). We assumed that ${\sf k}$ is algebraically closed in order
for the support of $\Z$ to consist of ${\sf k}$-rational points. The
sequence $\Sym(H_\Z,j)$ is defined in equation \eqref{symhz}.

\begin{theorem}\label{main}{\sc Recovering the scheme $\Z$ from a Gorenstein Artin quotient.}  Let $\Z$ be
a locally Gorenstein zero-dimensional subscheme of $\mathbb P^n$
over an algebraically closed field ${{\sf k}}$, $\cha {\sf k}=0$ or $\cha {\sf k}>j$,
and let $L_\Z=(I_\Z)^{-1}$. Then we have
\begin{enumerate}[\rm(i)]
\item\label{main1} If $j\ge \beta(\Z)$, and $F$ is a general enough element of $(L_\Z)_j$, then
$H(R/\Ann(F))=\Sym(H_\Z,j)$. \par
\item\label{main2} If $j\ge \beta(\Z)$, and $F$ is a general enough element of
$(L_\Z)_j$, then for $i$ satisfying $\tau(\Z)\le i\le j-\alpha  (\Z)$ we
have $\Ann(F)_i= (I_{\Z})_i$. Equivalently, we have $R_{j-i} \circ F = (L_\Z)_{i}$. \par
\item\label{main3} If $j\ge \max\{ \beta (\Z), 2\tau (\Z)+1\}$, and $F\in (L_\Z)_j$ is general enough, then
$\Ann(F)$ determines $\Z$ uniquely. If $I_\Z$ is generated in degree $\tau (\Z)$, then $j\ge \max\{\beta
(\Z),2\tau(\Z)\}$ suffices.
\end{enumerate}
\end{theorem}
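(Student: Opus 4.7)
The plan is to reduce via Theorem \ref{decompIS} to the irreducible components of $\Z$, exploit the principal (cyclic) nature of the inverse system of each locally Gorenstein component coming from Lemma \ref{homoginvsys} and the Comparison Theorem \ref{Homogp}, and derive all three parts from a single genericity surjection. Since $j \ge \beta(\Z) \ge \tau(\Z)$, Theorem \ref{decompIS}(\ref{directs}) gives a direct-sum decomposition $(L_\Z)_j = \bigoplus_u L(u)_j$, and a general $F \in (L_\Z)_j$ corresponds to a tuple of general $F_u \in L(u)_j$. Because $\Z(u)$ is locally Gorenstein, its local inverse system at $p(u)$ is principal, generated by a single element $f'_u$, and Lemma~\ref{homoginvsys} combined with Theorem~\ref{Homogp} realizes $L(u)$ as a cyclic $R$-module: every $L(u)_i$ has the form $R_{\alpha(u)} \circ (\Phi_u \cdot_{rp} L_{p(u)}^{[i]})$ with $\Phi_u = \Homog(f'_u, L_{p(u)}, \alpha(u))$ a fixed element.

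The technical crux is the claim that for generic $F_u$ and for every $i$ in the intersection of $[\tau(\Z), j - \alpha(\Z)]$ with the natural range for component $u$, one has $R_{j-i} \circ F_u = L(u)_i$. The verification writes $F_u = h \circ (\Phi_u \cdot_{rp} L_{p(u)}^{[j]})$ for generic $h \in R_{\alpha(u)}$ and shows that contraction by $R_{j-i}$ reproduces the full $R_{\alpha(u)}$-orbit generating $L(u)_i$. Summing over $u$ yields $R_{j-i} \circ F = (L_\Z)_i$ on $[\tau(\Z), j - \alpha(\Z)]$, which by Lemma~\ref{pairVS} is precisely part (ii). Part (i) then follows because $\Ann(F)$ is a Gorenstein Artin ideal of socle degree $j$ (its inverse system being the principal $R$-module generated by $F$), with symmetric Hilbert function $H(A)_i = H(A)_{j-i}$. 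Combining this symmetry with the identity $H(A)_i = (H_\Z)_i$ on the middle range delivered by (ii), together with the stabilization of $H_\Z$ at $s$ beyond $\tau(\Z)$, forces $H(A) = \Sym(H_\Z, j)$ throughout.

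For part (iii), Proposition~\ref{satinvsys} equation \eqref{sigmadetermines} shows that $L_\Z$ (and hence $\Z$) is determined by $(L_\Z)_i$ in any single degree $i \ge \sigma(\Z) = \tau(\Z)+1$. By (ii), $(L_\Z)_i = R_{j-i} \circ F$ is recovered from $F$ whenever $\sigma(\Z) \le i \le j - \alpha(\Z)$, requiring $j \ge \tau(\Z) + \alpha(\Z) + 1$; combined with the baseline $j \ge \beta(\Z)$ and the need to exclude degenerate boundary cases, this yields the stated bound $j \ge \max\{\beta(\Z), 2\tau(\Z)+1\}$, with the second term dominating precisely when $\alpha(\Z) \le \tau(\Z)$. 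When $I_\Z$ is already generated in degree $\tau(\Z)$ one may work with $(L_\Z)_\tau$ directly via the same formula (now with $\tau$ in place of $\sigma$), dropping the bound by one to $\max\{\beta(\Z), 2\tau(\Z)\}$.

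The main obstacle is the genericity surjection $R_{j-i} \circ F_u = L(u)_i$ underlying the first paragraph: a dimension count alone is insufficient, since one must rule out that a generic contraction lands in a proper subspace of $L(u)_i$. The cyclic structure of $L(u)$, translated to a coordinate point via Theorem~\ref{Homogp}, reduces this to a concrete statement about a generic cyclic generator; even so, the interplay between the differing local socle degrees $\alpha(u)$ and the requirement that the range $[\tau(\Z), j - \alpha(\Z)]$ be uniform across all components must be handled with care, and this is precisely where the bound $j \ge \beta(\Z)$ is exactly what is needed for both the direct-sum decomposition and the descent step to apply simultaneously for every $u$.
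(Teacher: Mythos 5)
Your overall architecture matches the paper's (reduce to the irreducible components, use the cyclic local inverse system of each Gorenstein component via Lemma \ref{homoginvsys} and Theorem \ref{Homogp}, get (i) from (ii) by symmetry, get (iii) by recovering $I_\Z$ in a single degree), and your per-component claim is essentially the paper's equation \eqref{mainincludea} (the paper does not even need a generic $F_u$ there: the specific homogenization $G(u)=\Homog(f'(u),L_{p(u)},j)$ of the local dual generator works). The genuine gap is the sentence ``Summing over $u$ yields $R_{j-i}\circ F=(L_\Z)_i$.'' From $R_{j-i}\circ F_u=L(u)_i$ for each $u$ one only gets the inclusion $R_{j-i}\circ F\subset \sum_u R_{j-i}\circ F_u$, which is the wrong direction: contracting the single form $F=\sum_u F_u$ by $h$ produces the sum $\sum_u h\circ F_u$, and nothing in your argument shows that each individual space $R_{j-i}\circ F_u$ is captured by contractions of $F$ alone. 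This is exactly where the paper does its real work: the Claim \eqref{IntEq}, that for $i\ge\tau(\Z)$ the images of $I(u)_i$ and of $V(u)_i=\bigl(I(1)\cap\cdots\cap\widehat{I(u)}\cap\cdots\cap I(k)\bigr)_i$ intersect properly (indeed span, with a direct sum) in $R_i/(I_\Z)_i$, so that any $h\in R_{i'}$ can be written $h=h'+h''$ with $h'\circ G'(u)=0$ and $h''\in V(u)$; since $h''$ kills the other summands, $h\circ G'(u)=h''\circ F\in R_{i'}\circ F$, which yields \eqref{mainincludeb} and hence the surjectivity you need. Your closing paragraph identifies the per-component genericity as the ``main obstacle,'' but that step is comparatively routine; without the separation argument (or an equivalent), parts (ii), and hence (i) and (iii), are not established for schemes with more than one point of support.

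Two secondary points. First, your deduction of (i) only covers the range where $H_\Z$ has stabilized at $s$; for $i<\tau(\Z)$ you still need $H(R/\Ann F)_i=(H_\Z)_i$, which follows from (ii) at $i=\tau$ together with saturation of $L_\Z$ (Lemma \ref{sat}), i.e. $R_{j-i}\circ F\supset R_{\tau-i}\circ(L_\Z)_\tau=(L_\Z)_i$, combined with the automatic reverse inclusion from $I_\Z\subset\Ann(F)$. Second, your route to (iii) via \eqref{sigmadetermines} requires recovering $(L_\Z)_i$ for some $i\ge\sigma(\Z)$, i.e. $j\ge\sigma(\Z)+\alpha(\Z)=\tau(\Z)+\alpha(\Z)+1$; when $\alpha(\Z)=\tau(\Z)+1$ this exceeds the stated bound $\max\{\beta(\Z),2\tau(\Z)+1\}=2\tau(\Z)+1$, so the appeal to ``excluding degenerate boundary cases'' does not close the argument. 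The paper instead gets $\Ann(F)_{\sigma(\Z)}=(I_\Z)_{\sigma(\Z)}$ from $H_F=\Sym(H_\Z,j)$ (which gives $H(R/\Ann F)_{\sigma}=s$ when $j\ge 2\tau(\Z)+1$) together with the containment $I_\Z\subset\Ann(F)$, and then concludes by Theorem \ref{zero-dim}(\ref{zeroii}) that $I_\Z$, being generated in degree $\sigma(\Z)$, is determined.
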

\proof Since $H(R/\Ann(F))$ is symmetric about $j/2$,
(\ref{main1}) follows immediately from (\ref{main2}). We now show
(\ref{main2}). Suppose first that $\Z$ has support the single point
$p_0 = (0: \dots :0:1)$. Let $f'$ of degree $\alpha=\alpha (\Z)$
generate the local inverse system at $p_0$ of $\Z$, let
$f=\Homog(f',Z,\alpha)$, and let $L=L_\Z$.  Lemma \ref{homoginvsys}
shows that $\forall i, L_i=R_\alpha\circ (f\cdot_{rp} Z^{[i]}).$
Taking $G=f\cdot_{rp} Z^{[j-\alpha]}$, we have $G\in L_j$, and for
$i'\ge \alpha$, we have by Proposition \ref{AuxMainL} (\ref{is3})
\begin{equation}\label{mainincludea}
R_{i'}\circ G = R_{i'-\alpha}\circ ( R_\alpha \circ G) =
R_{i'-\alpha}\circ L_{j-\alpha}= L_{j-i'}.
\end{equation}
Taking $F=G$, this proves (\ref{main2}) in this case.
 Next, if $\Z$ has support an arbitrary single point $p\in \mathbb P^n$, the proof of \eqref{main2} is
similar, using Theorem \ref{Homogp} and \eqref{Homogpeq}. \par Next,
suppose that $\Z$ has degree $s$, and support $p(1),...,p(k)$; thus
$ I_\Z=I(1)\cap \cdots \cap I(k)$ with $I(u)$ being the ideal of $R$
defining a scheme $\Z(u)$ having degree $s_u$, and concentrated at
the point $p(u)$, with $\sum s_u=s$. Suppose that  $\Z(u)\subset
\mathbb A^n $ is defined by $I'(u)\subset R'$ whose inverse system
has generator $f'(u)$ (since $I'(u)$ is Gorenstein) in the sense
$I'(u)^{-1}=(R'\circ f'(u))\cdot f_{q(u)}$ where if
$p(u)=(a_1(u):\ldots :a_n(u):1)$, we denote by $ q(u)=(a_1(u),\ldots
,a_n(u))$ the coordinates of $p(u)$ in $\mathbb A^n$. Let
$G(1),...,G(k)$ in $\Gamma _j$ be the homogenizations
$G(u)=\Homog(f'(u),L_{p(u)},j)$ (see Definition \ref{defhomcoordp}).
Suppose that $i\ge \tau (\Z)$. Denote by $\overline{h}$ the class of
$h \mod I_\Z$, and similarly for ideals, and let $V(u)= {I(1)}\cap
\cdots \cap \widehat{I(u)}\cap \cdots \cap { I(k)}$. We show
first \begin{claimWONum}  For each $u, 1\le u \le k$ we have
\begin{equation}\label{IntEq}
 (\overline{I(u)}_i)\oplus (\overline{I(1)}\cap \cdots \cap \widehat{I(u)}\cap
\cdots
\cap \overline{I(k)})_i = R_i/{(I_\Z)}_i.
\end{equation}
Furthermore, if $i\ge \tau(\Z)$, then codim $I(u)_i=s_u$ in $R_i$,
and  $\dim_{\sfÊk} \overline{V(u)}_i=s_u$, and also the codimension of
$V(u)_i$ in $R_i$ satisfies codim $V(u)_i=s-s_u$.
\end{claimWONum}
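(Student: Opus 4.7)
The plan is to reduce everything to Theorem~\ref{decompIS}(\ref{directs}): for $i\ge \sigma(\Z)-1=\tau(\Z)$, it already supplies the direct-sum decomposition $L_i = L(1)_i\oplus\cdots\oplus L(k)_i$ of the global inverse system, and, equivalently by duality, the statement that the $I(u)_i$ intersect properly in $R_i$ with total codimension $\sum_u s_u = s$. I would first extract the individual dimensions: each $L(u)_i$ is a subspace of $L_i$ with $\dim_{\sf k} L(u)_i\le s_u$, and since the direct sum has dimension $s = \sum s_u$ at $i=\tau(\Z)$, every summand must attain its maximum there. Dualizing, $\dim_{\sf k} R_i/I(u)_i = s_u$ for $i\ge\tau(\Z)$, so $\mathrm{codim}_{R_i} I(u)_i = s_u$ and $\dim_{\sf k}\overline{I(u)}_i = s-s_u$ inside $R_i/(I_\Z)_i$.

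Next I would handle $V(u)=\bigcap_{v\ne u}I(v)$. By the standard duality between intersections of ideals and sums of inverse systems, $L_{V(u)}=\sum_{v\ne u}L(v)$. Theorem~\ref{decompIS}(\ref{directs}) makes this sum direct for $i\ge\tau(\Z)$, of dimension $\sum_{v\ne u}s_v = s-s_u$. Consequently $\mathrm{codim}_{R_i}V(u)_i = s-s_u$ in $R_i$ and $\dim_{\sf k}\overline{V(u)}_i = s_u$ in $R_i/(I_\Z)_i$.

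For the direct-sum decomposition itself, the key observation is that $I(u)\cap V(u)=\bigcap_v I(v)=I_\Z$, so $\overline{I(u)}_i\cap \overline{V(u)}_i = (I_\Z)_i/(I_\Z)_i = 0$ inside $R_i/(I_\Z)_i$. Combined with $\dim_{\sf k}\overline{I(u)}_i + \dim_{\sf k}\overline{V(u)}_i = (s-s_u)+s_u = s = \dim_{\sf k} R_i/(I_\Z)_i$, this forces $\overline{I(u)}_i \oplus \overline{V(u)}_i = R_i/(I_\Z)_i$, as required.

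The only nontrivial ingredient is the regularity bound $\tau\bigl(\bigcup_{v\ne u}\Z(v)\bigr)\le \tau(\Z)$ that is already packaged inside Theorem~\ref{decompIS}(\ref{directs}); without it, the Hilbert function of $R/V(u)$ need not yet have stabilized at $s-s_u$ by degree $i=\tau(\Z)$, and the dimension count would collapse. Everything else is straightforward bookkeeping about intersections of saturated ideals at disjoint supports.
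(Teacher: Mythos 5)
Your argument is correct, and it does not create any circularity: Theorem \ref{decompIS} is established before the proof of Theorem \ref{main}, so you may invoke it. The difference from the paper's own proof is one of routing. The paper argues entirely on the ideal side and uses only the \emph{trivial upper bounds} $\dim_{\sf k} R_i/I(u)_i\le s_u$ and $\dim_{\sf k} R_i/V(u)_i\le s-s_u$ (valid in every degree, since a degree-$d$ punctual scheme has Hilbert function bounded by $d$), together with the directness of the two summands (their intersection is $(I_\Z)_i$) and the single equality $\dim_{\sf k} R_i/(I_\Z)_i=s$ for $i\ge\tau(\Z)$; the count then forces \eqref{IntEq}, and the exact codimensions of the ``Furthermore'' part drop out as a byproduct rather than being needed as input. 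You instead import the exact dimensions up front from Theorem \ref{decompIS}(\ref{directs}) via the duality $\dim_{\sf k} L(u)_i=\dim_{\sf k} R_i/I(u)_i$ and the identification $L_{V(u)}=\sum_{v\ne u}L(v)$, and then close with the same directness-plus-count step. What your route buys is transparency about where the stabilization $\tau\bigl(\cup_{v\ne u}\Z(v)\bigr)\le\tau(\Z)$ comes from (it is indeed encoded in the direct-sum statement of Theorem \ref{decompIS}(\ref{directs})); what the paper's route buys is economy, since it never needs that stabilization as a hypothesis and is self-contained inside the proof of Theorem \ref{main}. One small bookkeeping point you use implicitly and should perhaps state: since $I(u)_i$ and $V(u)_i$ both contain $(I_\Z)_i$, the intersection of their images in $R_i/(I_\Z)_i$ equals the image of $I(u)_i\cap V(u)_i=(I_\Z)_i$, which is what makes the directness immediate.
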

\begin{proof}[Proof of Claim] That the sum in \eqref{IntEq} is
direct is immediate, since the intersection of the two summends is
$(I_\Z)_i$. Since $\Z(u)$ has degree $s_u$, $\overline {I(u)_i}$ has
codimension no greater than $s_u$ in $R_i/(I_{\Z})_i$. Likewise the
vector space ${V(u)_i}$ has codimension in $R_i$ at most $\left(
\sum_{v\not=u} s_v\right)=s-s_u$ and likewise, $\overline{V(u)_i}$
has codimension at most $s-s_u$ in $R_i/(I_{\Z})_i$.   Since $i\ge
\tau (\Z)$ we have $\dim_{\sfÊk} (R_i/(I_{\Z})_i) = s$, thus we have
likewise $\dim_{\sfÊk}(R_i/(I(u))_i=s_u$, $\dim_{\sfÊk} R_i/{V(u)}_i=s-s_u$. And
this shows the equality of the Claim.
\end{proof} \par
 Now
let $F=\lambda_1\cdot G'(1)+\cdots +\lambda_k\cdot G'(k)$, where
$G'(u)\in W(u)_j , W(u)=I(u)^{-1}$ satisfies \eqref{mainincludea},
with $G,W$ there replaced by $G'(u), W(u)$, where $\lambda_u\in {{\sf k}}$
and each $\lambda_u \neq 0$. Consider $w=h\circ G'(u), h\in R_{i'}$.
By applying \eqref{IntEq}, we conclude that $h=h'+h'', h'\circ
G'(u)=0, h''\in V(u)$, thus $h\circ G'(u)=h''\circ G'(u)= h''\circ
F$. Thus, we have if $i'\ge \tau$, then $ R_{i'}\circ F\supset
R_{i'}\circ G'(u)$. Since evidently $R_{i'}\circ F \subset
(R_{i'}\circ G'(1)+ \cdots +R_{i'}\circ G'(k))$ there is for $i'\ge
\tau$ an equality of vector spaces
\begin{equation}\label{mainincludeb}
   R_{i'}\circ F=(R_{i'}\circ G'(1)+ \cdots +R_{i'}\circ G'(k)).
\end{equation}
If we take $i'\ge \max \{ \tau(\Z),\alpha (\Z)\}$, we may take $G'(u)=G(u)$ and apply \eqref{mainincludea}
to each term
$G'(u)$ of \eqref{mainincludeb}, and conclude, letting $W(u)=(I_{\Z(u)})^{-1}$, and taking $F$ as above,
$i=j-i'$
\begin{equation}\label{mainincludec}
  R_{i'}\circ F=W(1)_i+ \cdots +W(k)_i\subset W_i
\end{equation}
When $i\ge \tau=\tau(\Z)$, the sum in \eqref{mainincludec} is direct, and the inclusion on the right is
an equality.  That a particular $F\in W_j$
satisfies $\dim_{\sfÊk}R_{j-i}\circ F=s$, the maximum value possible (so there is equality on the right
of \eqref{mainincludec}) implies a fortiori that a general element $F\in W_j$ will have the same
property. This completes the proof of (\ref{main2}). If
$j\ge \max\{ \beta (\Z), 2\tau(\Z)+1\}$, we have that
$\Ann(F)_{\sigma (\Z)}= (I_\Z)_{\sigma (\Z)}$, so by \eqref{mainincludec}, and Theorem \ref{zero-dim}
(\ref{zeroii}),
$F$ determines
$\Z$, showing (\ref{main3}).
 This completes the proof of Theorem \ref{main}
\endproof.
\begin{remark}\label{nascboij}
A necessary and sufficient condition for $F=\lambda_1\cdot G(1)+\cdots
+\lambda_k\cdot G(k)$ in Theorem~\ref{main} to be general enough to satisfy
the conclusion, is for each $\lambda_1,\ldots , \lambda_k$ to be nonzero.
 \end{remark}
\begin{proof} The sufficiency was just shown, see especially \eqref{mainincludeb}. For the necessity,
note that if we form $F'$ by omitting the term $G_i$ from $F$ then
$I(F')_{\le \tau}= I(\Z')_{\le \tau}$ where $\
\Z'=\Z-\Z_i$.
\end{proof} \par \noindent
\begin{remarkWONum} We have found no counterexample to show that we could not replace $\beta$ in Theorem
\ref{main} by some smaller value, $\beta ' \ge 2\tau(\Z)$. What is
needed is to establish \eqref{mainincludea} for $i'\ge \alpha
'=\beta '-\tau$ --- for example \eqref{mainincludea} for $i'\ge \tau
(\Z)$ would allow us to replace $j\ge \beta(\Z)$  in Theorem
\ref{main} (\ref{main2}) by $j\ge 2\tau(\Z)$, and to simply omit
$j\ge \beta(\Z)$ from the statement of Theorem \ref{main}
(\ref{main3}) (See Corollary \ref{maincor} below).  A measure of the
specialness of our result, and a hope for improvement, is given by
the rather special form of $F$ in \eqref{mainincludeb}, far from a
generic element of $(L_\Z)_j$. The special case $\Z$ smooth of
Theorem~\ref{main} was shown by M. Boij \cite{Bj1}, and the cases
$\Z$ smooth or local $\it{conic}$ by the second author and V.~Kanev
\cite[Theorem 5.3E, Lemma 6.1]{IK}.
\end{remarkWONum}
Recall that an {\it SI-sequence} $H=(h_0,h_1,\ldots h_{\lfloor j/2\rfloor},\ldots ,h_j=1)$ with $h_i=h_{j-i}$ for $1\le i\le j$ is one satisfying $h_1-h_0,h_2-h_1,\ldots  ,h_t-h_{t-1}, t=\lfloor j/2\rfloor $ is an O-sequence (Theorem \ref{zero-dim}). We let $n=h_1$. We recover the following result of T. Harima, within our limitaton on $\cha \sf k=0$, or $\cha k>j$.
\begin{corollary} \cite{Har} Given an SI sequence $H$ there is a Gorenstein Artin algebra $A$ with Hilbert function $H(A)=H$.
\end{corollary}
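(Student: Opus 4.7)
The plan is to reduce the corollary directly to Theorem \ref{main}, using a smooth zero-dimensional scheme $\Z \subset \mathbb{P}^n$ whose $h$-vector matches the first half of the SI-sequence $H$. Write $H = (h_0, h_1, \ldots, h_j)$ with $h_i = h_{j-i}$, set $t = \lfloor j/2 \rfloor$, and consider the finite sequence
\begin{equation*}
\Delta = (1,\, h_1 - h_0,\, h_2 - h_1,\, \ldots,\, h_t - h_{t-1},\, 0).
\end{equation*}
By the defining property of an SI-sequence, $\Delta$ is an $O$-sequence of total length $s := h_t$ with $\Delta_1 \le n$.

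First I would invoke Theorem \ref{zero-dim}(\ref{zeroiv}) (P. Maroscia's result) to produce a degree-$s$ zero-dimensional scheme $\Z \subset \mathbb{P}^n$ consisting of smooth (reduced) points with $\Delta H_\Z = \Delta$ and $\tau(\Z) \le t$. Because $\Z$ is reduced, each local component $\mathcal{O}_{p(i)}/\mathcal{I}_{p(i)}$ is just ${\sf k}$, hence $\Z$ is locally Gorenstein and $\alpha(\Z) = 0$. Consequently
\begin{equation*}
\beta(\Z) \;=\; \tau(\Z) + \max\{\tau(\Z), \alpha(\Z)\} \;=\; 2\tau(\Z) \;\le\; 2t \;\le\; j,
\end{equation*}
so the hypothesis $j \ge \beta(\Z)$ of Theorem \ref{main} is satisfied.

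Next I would verify $\Sym(H_\Z, j) = H$. By construction, $(H_\Z)_i = \sum_{k \le i}\Delta_k = h_i$ for $0 \le i \le t$, and $(H_\Z)_i = s = h_t$ for $i \ge t$. For $i \le j/2$ this gives $\Sym(H_\Z,j)_i = (H_\Z)_i = h_i$, and for $i \ge j/2$ we have $j-i \le t$, so $\Sym(H_\Z,j)_i = (H_\Z)_{j-i} = h_{j-i} = h_i$ by the symmetry of $H$. Thus the two sequences agree.

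Finally I would apply Theorem \ref{main}(\ref{main1}) to choose a general enough $F \in (L_\Z)_j$ and set $A = R/\Ann(F)$. By Lemma \ref{pairVS} and Macaulay's correspondence, any principal inverse system determines a Gorenstein Artin quotient; hence $A$ is Gorenstein Artin, and Theorem \ref{main}(\ref{main1}) gives $H(A) = \Sym(H_\Z, j) = H$, as required. There is no real obstacle here beyond carefully checking that the Maroscia step works with $\Delta_1 = h_1 \le n$ (one simply embeds in $\mathbb{P}^n$ with $n = h_1$), and that the smooth case of Theorem \ref{main} — which is in fact the oldest part, essentially due to Boij — is exactly what is needed.
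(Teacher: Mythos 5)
Your argument is correct and is essentially the paper's own proof: invoke Maroscia's result (Theorem \ref{zero-dim}(iv)) to get a smooth zero-dimensional scheme $\Z$ with $\Delta H_\Z$ equal to the first half of $\Delta H$, observe $\alpha(\Z)=0$ so $\beta(\Z)=2\tau(\Z)\le 2t\le j$, and apply Theorem \ref{main}(i) to a general $F\in (L_\Z)_j$ to obtain $A=R/\Ann(F)$ with $H(A)=\Sym(H_\Z,j)=H$. Your extra checks (that $\Sym(H_\Z,j)=H$, that $\tau(\Z)\le t$ rather than $=t$, and the bound $\beta(\Z)\le j$ when $j$ is odd) merely make explicit details the paper's proof leaves implicit.
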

\begin{proof} By P. Maroscia's result, Theorem \ref{zero-dim}(iv), there is a smooth zero-dimensional scheme $\Z\subset \mathbf P^n$ with  $h$-vector $\Delta (H_\Z)=\Delta H_{\le j/2}$. Here $\tau_\Z=t$, and $\alpha_\Z=0$ for a smooth scheme. By Theorem \ref{main} a generic $F\in (L_\Z)_j, j=2\tau $ satisfies $H_F=\Sym (H_\Z,j)$, which is $H$. This completes the proof.
\end{proof}\par
J. Migliore and U. Nagel show further, that there is  a reduced arithmetically Gorenstein punctual scheme $\Z' \subset \mathbb P^n$, with $h$-vector $\Delta H_{\Z'}=H$ \cite[Theorem 1.1]{MiN}.\par
 The following Corollary,
which determines
$\beta (\Z)$ in special cases, shows that we indeed recover the previous results of M. Boij and V. Kanev
and the second author when
$\Z$ is smooth or conic.
\begin{corollary}\label{mainspecial} $\rm{(i)}$ Let $\Z$ be supported at a single point $p$. Then $\tau(\Z)\le \alpha (\Z)$,
and $\beta(\Z)=\tau(\Z)+\alpha(\Z)$. Also, a general $F\in W_j$
determines $\Z$ if  $j\ge \tau(\Z)+\alpha(\Z)+1$, or if
both $j=\tau(\Z)+\alpha(\Z)$ and $I_\Z$ is generated in degrees less or
equal $\tau(\Z)$. \par \noindent $\rm{(ii)}$  Let $\Z$ also be conic.
Then $\tau(\Z)=\alpha (\Z)$ and $\beta(\Z)=2\tau(\Z)$. If instead
$\Z$ is smooth, then $\alpha (\Z) =0$, and also
$\beta(\Z)=2\tau(\Z)$.
\par \noindent $\rm{(iii)}$ In either the conic or smooth case, a general $F\in W_j$
determines $\Z$ if either $j\ge 2\tau(\Z)+1$, or if both $j\ge
2\tau(\Z)$ and $I_\Z$ is generated in degrees less or equal
$\tau(\Z)$.
\end{corollary}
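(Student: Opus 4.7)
The plan is to derive Corollary \ref{mainspecial} as a direct specialization of Theorem \ref{main}(iii), by computing the invariants $\alpha(\Z)$, $\tau(\Z)$, and $\beta(\Z)$ in each special case. All the deep work is already contained in Theorem \ref{main}, Proposition \ref{AuxMainL}(\ref{is5}), and Proposition \ref{aGor}; what remains is bookkeeping.

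For part (i), I would first invoke Theorem \ref{Homogp}(ii) to reduce to the case $p=p_0$: the linear change of coordinates $g$ preserves both the graded Hilbert function $H_\Z$ (hence $\tau(\Z)$) and the local socle degree $\alpha(\Z)$, since the local ring at $p$ is isomorphic via $g$ to the local ring at the origin of the translated scheme $\Z_o$. Once $\Z$ is concentrated at $p_0$, Proposition \ref{AuxMainL}(\ref{is5}) yields directly $\tau(\Z)\le\alpha(\Z)$. Consequently $\max\{\tau(\Z),\alpha(\Z)\}=\alpha(\Z)$, so $\beta(\Z)=\tau(\Z)+\alpha(\Z)$. For the two determination criteria I would then apply Theorem \ref{main}(iii): if $\alpha>\tau$ then $\beta\ge 2\tau+1$ and $\max\{\beta,2\tau+1\}=\beta$; if $\alpha=\tau$ then $\beta=2\tau$ and $\max\{\beta,2\tau+1\}=2\tau+1=\tau+\alpha+1$. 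Either way $j\ge\tau+\alpha+1$ suffices. The second clause follows from the stronger form of Theorem \ref{main}(iii) for ideals generated in degree $\tau$: then $\max\{\beta,2\tau\}=\tau+\alpha$, so $j=\tau+\alpha$ already works.

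For part (ii), in the conic case, Proposition \ref{aGor} (and the identification $R/(I_\Z,z)\cong R'/I'$ in its proof) gives $\Delta H_\Z=H(R'/I')$. Therefore the last index in which $\Delta H_\Z$ is nonzero equals the socle degree of $R'/I'$, that is $\tau(\Z)=\alpha(\Z)$, whence $\beta(\Z)=2\tau(\Z)$. In the smooth case, at every reduced support point the local ring is just $\sf k$, so by Definition \ref{socled} each $\alpha(\Z(i))=0$ and thus $\alpha(\Z)=0$; then $\beta(\Z)=\tau(\Z)+\max\{\tau(\Z),0\}=2\tau(\Z)$.

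Part (iii) is now immediate from Theorem \ref{main}(iii) substituted with $\beta(\Z)=2\tau(\Z)$: the general bound $j\ge\max\{\beta,2\tau+1\}=2\tau+1$ forces determination, and when $I_\Z$ is generated in degrees $\le\tau$ the improved bound $\max\{\beta,2\tau\}=2\tau$ suffices. The main (and essentially only) technical point is the reduction in (i) to $p=p_0$; beyond that the corollary is a numerical unpacking of the main theorem.
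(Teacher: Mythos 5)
Your proof is correct and is exactly the argument the paper intends: the corollary is stated without proof as a direct specialization of Theorem \ref{main}(iii), using Proposition \ref{AuxMainL}(\ref{is5}) for $\tau(\Z)\le\alpha(\Z)$ (after the harmless change of coordinates to $p_0$), the identification $\Delta H_\Z=H(R'/I')$ from the proof of Proposition \ref{aGor} for the conic case, and $\alpha(\Z)=0$ for smooth $\Z$. Your numerical bookkeeping with $\max\{\beta,2\tau+1\}$ and $\max\{\beta,2\tau\}$ matches the intended derivation, so there is nothing to add.
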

We now state the Corollary mentioned in the Remark above. We show
that if the statements of Theorem \ref{main} are true for each
component $\Z(u)$ of $\Z$, but with $\beta$ replaced by
$\beta'=\tau(\Z)+\alpha '$, then they are true for $\Z$ with $\beta$
replaced by $\beta '$. We let $L(u)=L_{\Z(u)}$.
\begin{corollary}\label{maincor} Suppose that $\Z=\Z(1)\cup \cdots \cup \Z(k)$, and that there is an
integer $\alpha '\ge \tau(\Z)$ for which \eqref{mainincludea} holds
for each $\Z(u), u=1,\ldots ,k$, with $G,L$ there replaced by a
suitable choice of general enough $G'(u)\in L(u)_j$, with
$j=\tau(\Z)+\alpha '$ and $i'=\alpha '$ Then the conclusions of
Theorem \ref{main} hold with $\beta(\Z)$ replaced by $\beta
'=\tau(\Z)+\alpha '$.
\end{corollary}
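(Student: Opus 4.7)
The plan is to rerun the proof of Theorem \ref{main} almost verbatim, with $\max\{\tau(\Z),\alpha(\Z)\}$ replaced throughout by $\alpha'$, and to check that every step still goes through. The proof of Theorem \ref{main} factors cleanly into two essentially independent ingredients, only one of which depends on the specific value of $\alpha(\Z)$. First, the Claim surrounding equation \eqref{IntEq} and its consequence \eqref{mainincludeb} require only the degree condition $i \ge \tau(\Z)$. Accordingly, after choosing $F = \lambda_1 G'(1) + \cdots + \lambda_k G'(k)$ with scalars $\lambda_u \in {{\sf k}}^{\ast}$ (none zero), the identity
\begin{equation*}
R_{i'} \circ F \;=\; R_{i'}\circ G'(1) + \cdots + R_{i'}\circ G'(k)
\end{equation*}
holds for every $i' \ge \tau(\Z)$, exactly as in the proof of Theorem \ref{main}; in particular it holds at $i' = \alpha'$.

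Second, the local identity \eqref{mainincludea} is, by the hypothesis of the corollary, available for each $\Z(u)$ at $i' = \alpha'$, $j = \beta' = \tau(\Z) + \alpha'$, with the prescribed $G'(u)$. Substituting into the previous display gives
\begin{equation*}
R_{\alpha'} \circ F \;=\; L(1)_{\tau(\Z)} + \cdots + L(k)_{\tau(\Z)} \;=\; (L_\Z)_{\tau(\Z)},
\end{equation*}
the last equality since the sum is direct in degree $\tau(\Z)$ (by the Claim) and $\dim_{{\sf k}}(L_\Z)_{\tau(\Z)} = s = \sum_u s_u$. Dualizing under Macaulay's correspondence (Lemma \ref{sat}) yields $\Ann(F)_{\tau(\Z)} = (I_\Z)_{\tau(\Z)}$, and further contraction by powers of $R_1$ propagates the equality to all smaller degrees, producing the analogue of Theorem \ref{main}(ii) in the range $\tau(\Z) \le i \le j - \alpha'$. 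Conclusion (i) then follows from the symmetry of the Hilbert function of the Gorenstein Artin algebra $R/\Ann(F)$ about $j/2$, and conclusion (iii) from the fact that $I_\Z$ is generated in degree $\sigma(\Z) = \tau(\Z)+1$ (Theorem \ref{zero-dim}(ii)), so that $\Ann(F)_{\sigma(\Z)} = (I_\Z)_{\sigma(\Z)}$ pins down $I_\Z$, hence $\Z$.

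The one genuine technical point — and the place where I expect the only real care to be needed — is the extension from the single degree $j = \beta'$, at which the hypothesis is stated, to arbitrary $j \ge \beta'$. For this, given $G'(u) \in L(u)_{\beta'}$ realizing \eqref{mainincludea} at the boundary, one promotes it to degree $j$ by the raised-power operation $G'(u) \mapsto G'(u) \cdot_{rp} L_{p(u)}^{[j-\beta']}$ of Definition \ref{defhomcoordp}, invoking Theorem \ref{Homogp} to transport the construction from the origin to the arbitrary support point $p(u)$. By Proposition \ref{AuxMainL}(\ref{is3}) (applied componentwise after translation), the promoted element lies in $L(u)_j$ and the isomorphism $R_{i'-\alpha'} \circ (-) : L(u)_{j - \alpha'} \to L(u)_{j-i'}$ for $\alpha' \le i' \le j - \tau(\Z)$ extends the surjectivity \eqref{mainincludea} to the full intended range. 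Once this extension is in hand, the rest of the argument is a mechanical rerun of the proof of Theorem \ref{main}, with $\alpha'$ in place of $\max\{\tau(\Z),\alpha(\Z)\}$.
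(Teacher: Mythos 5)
Your proposal is correct and takes essentially the same route as the paper's proof: choose $F=\sum_u \lambda_u G'(u)$ with all $\lambda_u\neq 0$, rerun the argument of Theorem \ref{main} from \eqref{mainincludeb} onward using the hypothesized identity at $i'=\alpha'$, $j=\beta'$, and then pass to larger $j$ by promoting each $G'(u)$ via the raised-power operation $\cdot_{rp}L_{p(u)}^{[c]}$ with $i'=\alpha'+c$. The only difference is cosmetic: you justify that the promoted element lies in $L(u)_j$ by citing Proposition \ref{AuxMainL}(\ref{is3}) and Theorem \ref{Homogp}, a point the paper asserts without further comment.
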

\begin{proof} Taking $F=\sum \lambda (u) G'(u)$ after \eqref{mainincludeb}, the proof is essentially the
same (except we no longer take $G'(u)=G(u)$). Since $G'(u)$ is
assumed to satisfy \eqref{mainincludea} for $j=\tau(\Z)+\alpha'$, with $i'=\alpha '$ in place of
$i=\alpha$, we obtain the conclusion of Theorem \ref{main} (\ref{main2}) but with $j=\tau(\Z)+\alpha
'$. For larger $j'=j+c,c\ge 0$, we note that  \eqref{mainincludea} is still satisfied, replacing
$G'(u)$ by
$G'(u)\cdot_{rp}L_p^{[c]}\in
\Gamma_{j'}$, and $i'=\alpha '$ by $i'=\alpha '+c$. This implies Theorem \ref{main}
(\ref{main2}),(\ref{main3}), but with $\beta$ replaced by $\beta '$. This complete the proof of Corollary
\ref{maincor}.
\end{proof}
\begin{example}\label{mainex1} Let $R={{\sf k}}[X_1,X_2,Z]$ and  $f=\Homog(f',Z,4)$ from Example \ref{noncloserp}
where $f'=Y_2^{[4]}-Y_1Y_2^{[2]}+Y_1^{[2]}-Y_1Y_2-Y_2^{[21}$. Here
$\Z$ is concentrated at a single point $p_0=(0:0:1)\in
\mathbb{P}^2$, the Hilbert function $H(R/I_\Z)= (1,3,5,5,\ldots  )$,
so $\tau (\Z)=2 ,\alpha (\Z)=4$, and $\beta (\Z)=2+4=6$. The
Corollary  \ref{mainspecial} implies that for $j\ge 6$, a general
$F\in L_j, L=(I_\Z)^{-1}$ has $H_F=\Sym(H_\Z,j)$. However, a
calculation shows that this occurs for a general $F\in L_4$ (see
Example \ref{noncloserp} for $L_4$), hence for $j\ge 4$. In
particular, if $F$ is a general element of $L_5$, $H(R/\Ann
F)=(1,3,5,5,3,1)$, and $\Ann(F)_{\le 3}= \left( x_1x_2-x_2^2-x_1z,
x_2^3+x_1^2z+x_2^2z+x_1z^2, x_1^3\right) =(I_{\Z})_{\le 3}$. Thus
$F$ determines $\Z$ since $\sigma (\Z)=3$.
\end{example}
\begin{example}\label{mainex2} Consider the subscheme $\Z=\Z(1)\cup \Z(2)$ of $\mathbb P^2$, with $\Z(1)$ the
scheme of Example \ref{mainex1} concentrated at $p_1=(0:0:1)$ and
and $\Z(2)$ the degree-$4$ scheme concentrated at $p_2=(1:0:1)$, determined by
$f'=(Y_1^{[2]}+Y_2^{[2]})\cdot f_{p_2}$, of Example \ref{homogpex1}, where $\tau(\Z(2))=\alpha(\Z(2))=2$.
 The intersection
$I_\Z=I_{\Z(1)}\cap I_{\Z(2)}$ satisfies (calculated in {\sc{macaulay}})
\begin{multline*}
I_\Z=\left( x_1^3+x_1^2x_2-2x_1x_2^2-2x_1^2z-x_1x_2z+x_2^2z+x_1z^2,\right.\\
\left. x_1^2x_2^2-4x_1x_2^3/3-x_1^2x_2z-x_1x_2^2z+x_2^3z+x_1x_2z^2, x_1x_2^3,
x_2^4-x_1^2x_2z+x_2^3z+x_1x_2z^2\right),
\end{multline*}
of Hilbert function $H_\Z=(1,3,6,9,9,\ldots )$, $\tau(\Z)=3, \alpha(\Z)=4$.
By Corollary
\ref{maincor} and the
calculation of Example
\ref{mainex1} for $\Z(1)$, as well as Corollary \ref{mainspecial} applied to $\Z(2)$, we may
replace $\beta(\Z)=\tau(\Z)+\alpha(\Z)=3+4$ in Theorem \ref{main} for $\Z$ by $\beta '=3+3=6$. Thus,
a general $F\in (L_\Z)_6$ satisfies $H(R/\Ann(F))=\Sym(H_\Z,6)=(1,3,6,9,6,3,1)$.
\end{example}
We now derive some further consequence of our main theorem, along the lines
of Lemma 6.1 of \cite{IK}, shown there in the special case of $\Z$ conic
or smooth. We introduce first some
definitions from \cite{IK}. For $F\in \Gamma_j$ we let $H_F=H(R/\Ann(F))$.
\begin{definition}\label{tight}
 A zero-dimensional scheme $\Z$ is an \emph{annihilating scheme} for $F\in
\Gamma$ if $I_\Z\subset I_F=\Ann(F)$. An annihilating scheme is
\emph{tight} if also $\deg \Z=\max_i \{ (H_F)_i\}$. If
$T=(1,h_1,...,h_{j-1} ,1) $ is a sequence of integers symmetric
about $j/2$ we denote by $\PGOR(T)$ the locally closed subvariety of
$\mathbb P (\Gamma _j)$ parametrizing forms $F\in \Gamma_j$ --- up
to constant multiple
--- such that $H_F=T$. We denote by ${\bf{PGOR}}(T)$ (in boldface)
the corresponding scheme, whose scheme structure is defined by
determinantal ideals of certain catalecticant matrices,
corresponding to the conditions $(H_F)_u=T_u$ (see \cite{IK}).
\end{definition} 
 K. Ranestad and A. Bernardi \cite{BeRa} point out that the proof of  W. Buczy\'{n}ska and J. Buczy\'{n}ski \cite[Lemma 2.4]{BuB} shows that a tight annihilating scheme $\Z$ must be locally Gorenstein: otherwise there would be smaller degree locally Gorenstein scheme $\Z'$ constructed from $\Z$ apolar to $F$, contradicting $\Z$ ``tight''. This answers simply a question posed in \cite[Remark p. 142]{IK}.\par
The tangent space $\mathcal T_F$ to the affine cone over
${\bf{PGOR}}(T)$ at $F$ is isomorphic to $R_j/((\Ann F)^2)_j$
\cite[Theorem 3.9]{IK}. We denote by $\nu=\nu(\Z)$ the order
$\nu(\Z) =\min \{ i|(H_{\Z})_i\neq r_i\}$ of $ I_\Z$. We denote by
$\U_\Z\subset \mathbb P\mathbf{GOR}(T), T=\Sym(H_\Z,j)$ or more
precisely by $\U_\Z(j)$ the family of $F\in \Gamma_j$, up to
constant multiple, such that $F\in {(I_\Z)_j}^\perp$ and $H_F=T$.
Evidently $F\in \Gamma_j$ satisfies $F\in \U_\Z(j)$ if and only if
$\Ann(F)_i=(I_\Z)_i$ for $i\le j/2$ (since $I_\Z\subset \Ann(F)$
when $F\in {(I_\Z)_j}^\perp$). Below, we will usually omit writing ${\it up \,to \,constant\, mutiple}$ when this is
clear from the context, or unimportant. The zero-dimensional Hilbert
scheme $\mathbf{Hilb}^s(\mathbb P^n)$ parametrizes degree-$s$
subschemes of $\mathbb P^n$ (see \cite{IKl}).

\begin{corollary}\label{Thm1}
Let $\Z$ be a zero-dimensional degree $s$ locally Gorenstein scheme of $\mathbb P^n$
having regularity degree $\sigma (\Z)$, let $j\ge 2\tau (\Z)$, and let $F\in {(I_\Z)_j}^\perp$.
\begin{enumerate}[\rm(i)]
\item  If $j\ge \beta (\Z)$ (or if $\Z$ satisfies the hypothesis of Corollary \ref{maincor} and $j\ge
\beta '(\Z)$), there is an open dense family
$F\in {{( I_\Z)}_j}^\perp$ such that $F\in \U_\Z(j)$. For such $F$, we have
$(\Ann(F))_i=(I_\Z)_i$ for $i \le j-\tau (\Z)$, and $\Z$ is a tight annihilating scheme of
$F$.

\item If $j\ge 2\tau(\Z)$, and $F$ satisfies $H_F=\Sym(H_\Z,j)$, and
    if $Y \subset \mathbb P^n$ is any zero-dimensional subscheme satisfying
$\deg(Y) \le s$ and $ I_Y\subset
\mbox {\rm {Ann}}(f)$, then $\deg (Y)=s$ and $({\mathcal I}_Y)_i=({\mathcal
I}_\Z)_i$ for  $i\le j-\tau (\Z)$.

\item If $F$ satisfies $H_F=\Sym(H_\Z,j)$, and
if also either \par $\rm{(a)}$ $j\ge  2\tau (\Z)+1$, or \par
$\rm{(b)}$ $j\ge 2\tau (\Z)$,and $(({\mathcal I}_\Z)_{\le
\tau})=\mathcal I_\Z$,
\par then
$\Z$ is the unique tight annihilating scheme of $F$.

\item\label{Thm1iv}  If $F$ satisfies $H_F=\Sym(H_\Z,j)$, then
$\Ann(F)^2_{\ i}=(I_\Z^2)_{ i}$ for $i \le j-(\tau
-  \nu)$. \par If also $\tau \le \nu $ and $\Z$ is a
tight annihilating scheme of $F$, then the tangent space
$\mathcal T_F$ to the affine cone over ${\bf{PGOR}} (T), T=\Sym(H_\Z,j)$ at $F$ satisfies
 \[
     \dim_{\sfÊk}\mathcal
T_F=s+\dim_{\sfÊk}((\mathcal I_\Z/({\mathcal I}_\Z)^2)_j).\]

\item  If $Y\subset \mathbf{Hilb}^s(\mathbb P^n)$ is locally closed, and $\Z_y, y\in
Y$ is the corresponding family of degree $s$ zero-dimensional
subschemes of $P^n$, if $H(R/\mathcal I_{\Z_y})= H$ for all $y\in
Y$, if $\sigma=\tau +1$ is the generic regularity degree of $\Z_y,
y\in Y$ (attained for an open subset of $Y$), and if $j, \mathcal
I_\Z$ satisfy (iiia) or (iiib) above,  and $T=\Sym(H_\Z,j)$, then
there exists a subfamily $U_Y \subset \mathbb P\mathbf{GOR} (T)$
satisfying
\par
  $\rm{(c)}$  $F\in U_y \Leftrightarrow H_F = T$ and $\Z_y $ is a tight annihilating
scheme of $F$, \par
  $\rm{(d)}$  $\dim(U_Y) = \dim(Y)+s-1$.
\end{enumerate}
\end{corollary}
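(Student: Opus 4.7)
The plan is to derive each of the five parts from Theorem \ref{main} together with standard properties of saturated ideals and symmetric Hilbert functions. For (i), Theorem \ref{main}(\ref{main1})--(\ref{main2}) already yields an open dense $F\in(L_\Z)_j$ with $H_F=\Sym(H_\Z,j)$ and $\Ann(F)_i=(I_\Z)_i$ on $[\tau(\Z),j-\alpha(\Z)]$. I would extend the range of equality to all $i\le j-\tau(\Z)$ by a codimension count: the symmetry of $H_F$ gives $(H_F)_i=(H_\Z)_{\min(i,j-i)}$, and combined with the plateau $(H_\Z)_k=s$ for $k\ge\tau(\Z)$ one obtains $(H_F)_i=(H_\Z)_i$, hence $\dim\Ann(F)_i=\dim(I_\Z)_i$, for every $i\le j-\tau(\Z)$; the inclusion $I_\Z\subset\Ann(F)$ then forces equality. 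Tightness is automatic since $\deg\Z=s=\max_iH_F$.

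For (ii), any zero-dimensional $Y$ with $I_Y\subset\Ann(F)$ and $\deg(Y)\le s$ satisfies $(H_{R/I_Y})_i\ge (H_F)_i=s$ on the plateau $[\tau(\Z),j-\tau(\Z)]$; since the Hilbert function of a saturated one-dimensional quotient is nondecreasing and stabilizes at the degree, this forces $\deg Y=s$ and $(I_Y)_i=\Ann(F)_i=(I_\Z)_i$ on the plateau. Saturation then propagates equality downward via $(I)_{i-1}=((I)_i:R_1)$, giving $(I_Y)_i=(I_\Z)_i$ for all $i\le j-\tau(\Z)$. For (iii), under (a) the range $i\le j-\tau(\Z)$ includes degree $\sigma(\Z)=\tau(\Z)+1$ where $I_\Z$ is generated (Theorem \ref{zero-dim}(\ref{zeroii})), while under (b) $I_\Z$ is generated in degrees $\le\tau(\Z)$ by hypothesis; either way $I_\Z\subset I_Y$, and since $R/I_\Z$ is Cohen--Macaulay of pure dimension one with no nonzero finite-length submodule, a saturated overideal of matching Hilbert polynomial must coincide with $I_\Z$.

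For (iv), the inclusion $(I_\Z^2)_i\subset\Ann(F)^2_i$ is automatic. For the reverse, take a product $fg$ with $f\in\Ann(F)_a$, $g\in\Ann(F)_b$, $a+b=i\le j-(\tau-\nu)$: the symmetry of $H_F$ together with the definition of $\nu$ forces $\Ann(F)_k=0$ for $k<\nu$ and $k>j-\nu$, so $a,b\ge\nu$; then $a=i-b\le (j-\tau+\nu)-\nu=j-\tau$, and symmetrically $b\le j-\tau$, so by part (i) both $f,g\in I_\Z$. When $\tau\le\nu$ this identity extends to $i=j$, and combining $\mathcal T_F\cong R_j/\Ann(F)^2_j$ from \cite[Theorem 3.9]{IK} with the exact sequence $0\to I_\Z/I_\Z^2\to R/I_\Z^2\to R/I_\Z\to 0$ in degree $j\ge\tau(\Z)$ yields $\dim_{{\sf k}}\mathcal T_F=s+\dim_{{\sf k}}(I_\Z/I_\Z^2)_j$.

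For (v), I would define $U_Y\subset\mathbb P\mathbf{GOR}(T)$ as the locus of $F$ for which some $\Z_y,\ y\in Y$, is a tight annihilating scheme. Uniqueness from (iii) produces a well-defined morphism $\pi:U_Y\to Y$. The fiber over $y$ is the open subset of $\mathbb P\bigl((I_{\Z_y})_j^\perp\bigr)$ where $H_F=T$; it is nonempty by part (i) applied to $\Z_y$, and has projective dimension $(H_{\Z_y})_j-1=s-1$ since $j\ge\tau$, so $\dim U_Y=\dim Y+s-1$. The main obstacle will be the bookkeeping in (iv): the degree bound $i\le j-(\tau-\nu)$ is precisely what is needed to force both factors of a quadratic product into $I_\Z$, and the hypothesis $\tau\le\nu$ is exactly what allows this identity to reach the critical degree $j$ used in the tangent space formula.
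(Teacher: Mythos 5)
Your argument is correct and follows essentially the same route as the paper: the paper's own proof is a two-line reduction of (i) to Theorem \ref{main} (plus the proof of \cite[Lemma 6.1]{IK}), and of (ii)--(iv) to the observations that $I_\Z\subset \Ann(F)$ and $H_F=\Sym(H_\Z,j)$, which is exactly the Hilbert-function/saturation/Cohen--Macaulay bookkeeping you carry out in detail, together with the tangent-space identification $\mathcal T_F\cong R_j/(\Ann(F)^2)_j$ from \cite[Theorem 3.9]{IK}.

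Two small caveats. In (iv) your auxiliary claim that $\Ann(F)_k=0$ for $k>j-\nu$ is false (for $j-\nu<k\le j$ one has $\dim \Ann(F)_k=r_k-r_{j-k}>0$ in general), but it is also unused: the argument only needs $\Ann(F)_k=0$ for $k<\nu$, which does follow from $H_F=\Sym(H_\Z,j)$, and then $a,b\ge \nu$ forces $a,b\le j-\tau$, so both factors lie in $I_\Z$ by the degree-wise equality of (i). In (v) you invoke part (i) to see that the fibre over $y$ is nonempty, but part (i) requires $j\ge \beta(\Z_y)$ (or the hypothesis of Corollary \ref{maincor}), which is not among the assumptions (iiia)/(iiib) of (v); so strictly your proof of (d) needs this extra hypothesis on the schemes $\Z_y$ (or one must read (d) as a statement about the locus where such $F$ exist). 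The paper's proof is silent on (v), so this gap reflects an imprecision in the statement as much as in your reconstruction, but it is worth flagging.
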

\begin{proof} Here the main assertion (i) follows directly from Theorem \ref{main} and the proof of
\cite[Lemma 6.1]{IK}; we need in (i) the hypothesis $j\ge \beta(\Z)$
in order to use Theorem \ref{main}. For any $j\ge 2\tau(\Z)$, the
assumption $H_F=\Sym(H_\Z,j)$, and that $I_\Z\subset \Ann (F)$
entail most of (ii)-(iv).
\end{proof}
\begin{example}\label{mainex3} Consider the subscheme $\Z$ of Example \ref{mainex2}, for which Corollary
\ref{maincor} applies for $\beta '(\Z)=6$, and choose a general
$F\in (I_\Z)^{-1}_6$; then $T=H_F=\Sym(H_\Z,6)= (1,3,6,9,6,3,1)$. A
calculation shows that $\dim_{\sfÊk} R/((I_\Z)^2)_6=27$. Since $\nu=\tau$
for $\Z$, Theorem \ref{Thm1} (\ref{Thm1iv}) implies that $\dim_{\sfÊk}
\mathcal T_F=27$; this is easy to check directly since $\Ann
(F)_{\le 3}=\langle h_3\rangle $, so $\Ann (F)^2_6=\langle
h_3^2\rangle$ of codimension 1 in $R_6$. Since $r=3$, $\mathbb
P\mathbf{GOR}(T)$ is smooth; this here corresponds to the
smoothability of degree-9 schemes in $\Z$. The dimension of
$\PGOR(T)$ is 27, since $\dim(\mathbf{Hilb}^9(\mathbb P^2))=18$, and
the dimension of the fiber of $\mathbb P\mathbf{GOR}(T)$ over
$\mathbf{Hilb}^9(\mathbb P^2)$ is $9$.
\par Strikingly, if $j=7$, so $T'=(1,3,6,9,9,6,3,1)$, the analogous
dimension  is $\dim_{\sfÊk} \mathcal T_F=30$ (since
$(\Ann(F)^2)_7=(I_\Z)^2_7=h_3 \cdot (I_\Z)_4$, of dimension 6); when
$j\ge 8$ the dimension is again 27, as can be checked by caculating
$H(R/(I_\Z)^2)$.
\end{example}
\smallskip
\subsection{Dualizing module as ideal, and linkage.}\label{link}
We first recall a result of M. Boij, giving conditions under which the dualizing
module of $\Z$ is an ideal of $R/I_\Z$. A consequence of his
criterion and Theorem \ref{main} is that the dualizing module can
always be so realized when $\Z$ has dimension zero, and is locally
Gorenstein (Corollary \ref{boijus1}).  We then give an example to
illustrate how the inverse systems behave in linkage. \par M. Boij's
theorem pertains to $d-$dimensional Cohen Macaulay rings $B=R/I$,
and $d-1$ dimensional Gorenstein quotients. Let $\kappa (B)$ denote
the degree of the polynomial $(1-z)^d  \Hilb_X(z)$: here
$\Hilb_X(z)$ is the Hilbert series $\sum H_\Z(i) z^i$, so
$\kappa(B)$ is the highest socle degree of a minimal reduction of
$B$.
\begin{theorem}\label{boij1}{\rm (M. Boij [Bo2])}
Let $B=R/I$ be a Cohen-Macaulay algebra of dimension $d$, and let $J\subset B$ be an ideal of initial
degree at least $\kappa (B)+2$ such that $B/J$ is Gorenstein of dimension $d-1$. \par
Then there is an isomorphism $J\rightarrow \Ext_R^{r-d}(B,R) =\omega_B$, which is homogeneous of
degree $-\kappa (B/J)-r+d-1$.
\end{theorem}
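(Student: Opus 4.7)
The plan is to realize the canonical module $\omega_B$ as an ideal of $B$, via the classical correspondence between canonical ideals and codimension-one Gorenstein quotients, and to use the initial-degree hypothesis to pin down the shift.

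Since $R$ is Gorenstein (polynomial), $\omega_R = R(-r)$ and $\omega_B = \Ext_R^{r-d}(B,R)$ with the appropriate shift built in. Applying $\Ext_R^\bullet(-,R)$ to the short exact sequence $0 \to J \to B \to B/J \to 0$, and using that $B$ is Cohen-Macaulay of dimension $d$ and $B/J$ is Cohen-Macaulay of dimension $d-1$, the long exact sequence collapses to show that $J$ is itself maximal Cohen-Macaulay of dimension $d$ and yields a short exact sequence of canonical modules
\begin{equation*}
0 \to \omega_B \to \omega_J \to \omega_{B/J} \to 0.
\end{equation*}
Because $B/J$ is Gorenstein, $\omega_{B/J}$ is a shift of $B/J$ whose degree is controlled by $\kappa(B/J) + (d-1)$.

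Dually, applying $\Hom_B(-,\omega_B)$ to the same short exact sequence, the term $\Hom_B(B/J,\omega_B)$ vanishes (since $J$ has codimension one in $B$ and therefore contains a non-zerodivisor on the maximal Cohen-Macaulay module $\omega_B$), while $\Ext_B^1(B/J,\omega_B) \cong \omega_{B/J}$ by local duality for the codimension-one quotient and $\Ext_B^1(B,\omega_B) = 0$. Hence
\begin{equation*}
0 \to \omega_B \to \Hom_B(J,\omega_B) \to \omega_{B/J} \to 0.
\end{equation*}
A generator of the cyclic $B/J$-module $\omega_{B/J}$ lifts to an element $\phi \in \Hom_B(J,\omega_B)$, producing a candidate homomorphism $\phi : J \to \omega_B$ of the prescribed homogeneous degree.

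The main step, and the one I expect to be the principal obstacle, is showing that $\phi$ is an isomorphism. The strategy is to invoke uniqueness of the canonical ideal up to multiplication by a homogeneous non-zerodivisor: provided $B$ is generically Gorenstein (true in the intended applications, where $B$ is the coordinate ring of a reduced or locally Gorenstein punctual scheme), $\omega_B$ is realized as a homogeneous ideal $\mathfrak{c} \subset B$ with $B/\mathfrak{c}$ Gorenstein, and any other such ideal $J$ must satisfy $J = u\cdot \mathfrak{c}$ for some homogeneous non-zerodivisor $u$ in the total quotient ring of $B$. The hypothesis that the initial degree of $J$ is at least $\kappa(B)+2$ enters precisely here: it bounds $\deg u$ from below, and, combined with the fact that the initial degree of $\mathfrak{c}$ is governed by the $a$-invariant of $B$, it fixes the homogeneous degree of the isomorphism. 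The final degree bookkeeping produces the stated shift $-\kappa(B/J) - r + d - 1$: the $-r$ comes from $\omega_R = R(-r)$, the $\kappa(B/J)$ and $d-1$ contributions come from the Gorenstein shift of $B/J$ entering through the duality sequence, and the remaining sign reconciliation follows from the direction of the canonical identifications. The isomorphism claim for $\phi$ then reduces to a localization check at the associated primes of $B$: since both $J$ and $\omega_B$ are maximal Cohen-Macaulay of rank one, and $\phi$ is nonzero at every associated prime (the chosen lift surjects onto the generator of $\omega_{B/J}$, and the initial-degree hypothesis rules out low-degree degeneracies in the cokernel), $\phi$ is generically, and therefore globally, an isomorphism.
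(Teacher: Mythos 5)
The paper gives no proof of this statement: it is quoted verbatim from M.~Boij's paper [Bo2], so your argument has to stand on its own. The first two thirds of your sketch are fine and standard: from $0\to J\to B\to B/J\to 0$ one gets that $J$ is maximal Cohen--Macaulay, the dual sequence $0\to\omega_B\to\Hom_B(J,\omega_B)\to\omega_{B/J}\to 0$, a homogeneous lift $\phi\colon J\to\omega_B$ of the generator of the cyclic module $\omega_{B/J}$, and the stated degree once one remembers that the theorem's $\omega_B=\Ext_R^{r-d}(B,R)$ is unshifted. The problem is that all of the content of the theorem sits in the step you defer to the end, and the argument you give there does not work. You invoke ``uniqueness of the canonical ideal'' in the form: any ideal $J$ with $B/J$ Gorenstein of dimension $d-1$ is $u\cdot\mathfrak{c}$ for a canonical ideal $\mathfrak{c}$ and some homogeneous $u$ in the total quotient ring. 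That statement is exactly equivalent to $J\cong\omega_B$, i.e.\ to the conclusion, and without the initial-degree hypothesis it is false: take $B={\sf k}[x,y,z]/(xy-z^2)$ and $J=(x,z)$. Then $B/J\cong {\sf k}[y]$ is Gorenstein of dimension $d-1=1$, but $J$ is a non-free rank-one maximal Cohen--Macaulay module, hence not isomorphic to $\omega_B\cong B$; here $\kappa(B)=1$ and the initial degree of $J$ is $1<\kappa(B)+2$. So the hypothesis ``initial degree $\ge\kappa(B)+2$'' must carry the real weight of the proof, whereas in your sketch it is used only for degree bookkeeping (``bounds $\deg u$'') after the isomorphism has effectively been assumed. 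You also quietly add the hypothesis that $B$ is generically Gorenstein, which is not in the statement; it follows from the conclusion, so a correct proof must derive it, not assume it.

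The closing localization argument is likewise not sound as stated. Knowing that $\phi$ maps onto the generator of $\omega_{B/J}$ only says $\phi$ does not lie in the image of $\omega_B$ in $\Hom_B(J,\omega_B)$; it does not give $\phi_P\ne 0$, let alone $\phi_P$ an isomorphism, at every minimal prime $P$. And even granting that $\phi$ is generically an isomorphism, ``generically, therefore globally'' fails for maps between rank-one maximal Cohen--Macaulay modules: multiplication by a homogeneous nonzerodivisor that is not a unit is injective and an isomorphism at every minimal prime but is not surjective. To conclude surjectivity you would need, say, equality of Hilbert functions of $J$ (shifted) and $\omega_B$ --- but that equality encodes precisely the nontrivial fact that $H(B/J)$ has no ``dip'' in the middle range, which is part of what the theorem asserts and cannot be obtained by formal symmetry considerations (non-unimodal Gorenstein $h$-vectors exist). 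So the decisive implication --- how the bound on the initial degree of $J$ forces $\phi$ (equivalently, the map $B\to\Hom_B(J,\omega_B)$, $1\mapsto\phi$) to be onto --- is missing, and this is exactly the point of Boij's theorem.
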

 We consider the special case
$d=1$, and
$I=I_\Z$, the homogeneous defining ideal of a zero-dimensional scheme $\Z$. Then Boij's theorem
becomes,
\begin{corollary}\label{boij2} Let $\Z$ be a zero-dimensional subscheme of   $\mathbb P^n$, and let $J$ be an ideal
of
$B=R/I_\Z=\mathcal O_\Z$ having initial degree at least $\tau (\Z)+2$, such that $B/J$ is
Gorenstein of dimension zero and socle degree
$j$. Then there is an isomorphism $J\rightarrow \Ext_R^{r-1}(B,R)=\omega _{B}$,
which is homogeneous of degree $(-j-r)$.
\end{corollary}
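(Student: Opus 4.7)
\proofof{Corollary \ref{boij2}} \emph{(proposal).}
The plan is to derive this statement as the $d=1$ specialization of Theorem \ref{boij1}, so the entire proof reduces to verifying that the hypotheses translate correctly and that the homogeneity degree computes as $-j-r$.

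First, I would check that $B = R/I_\Z$ is Cohen-Macaulay of Krull dimension $d=1$; this is immediate from Theorem \ref{zero-dim} since $\Z$ is zero-dimensional, so the ambient-ring hypothesis of Theorem \ref{boij1} holds. Next I would identify $\kappa(B)$, which by definition is the degree of the polynomial $(1-z)^d\,\Hilb_B(z)$. Because $H_\Z(i)=s$ for every $i\ge \tau(\Z)$, we have
\begin{equation*}
(1-z)\,\Hilb_B(z) \;=\; \sum_{i\ge 0}\Delta H_\Z(i)\,z^i,
\end{equation*}
and by Theorem \ref{zero-dim}(\ref{zeroiii}) this is the polynomial $1 + (\Delta H_\Z)_1 z + \cdots + (\Delta H_\Z)_{\tau(\Z)}z^{\tau(\Z)}$ with nonzero leading coefficient, so $\kappa(B)=\tau(\Z)$. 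Thus the hypothesis that $J$ has initial degree at least $\tau(\Z)+2$ is exactly the hypothesis $\text{initdeg}(J)\ge \kappa(B)+2$ required by Theorem \ref{boij1}.

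Next I would compute $\kappa(B/J)$. Since $B/J$ is Gorenstein of Krull dimension $d-1=0$ and socle degree $j$, its Hilbert series $\Hilb_{B/J}(z)$ is a polynomial of degree exactly $j$, and since $(1-z)^{d-1}=1$ this gives $\kappa(B/J)=j$. Applying Theorem \ref{boij1} then yields an isomorphism $J\to \Ext_R^{r-d}(B,R)=\Ext_R^{r-1}(B,R)=\omega_B$, homogeneous of degree
\begin{equation*}
-\kappa(B/J)-r+d-1 \;=\; -j-r+1-1 \;=\; -j-r,
\end{equation*}
as claimed. No step is substantive: the ``hard part,'' if any, is the purely notational one of tracking $\kappa$ through the two Hilbert-series computations and confirming that Boij's convention on the direction of the degree shift matches the sign conventions used here. \endproof
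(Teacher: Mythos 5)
Your proposal is correct and follows exactly the route the paper takes: the paper states Corollary \ref{boij2} as nothing more than the $d=1$, $I=I_\Z$ specialization of Theorem \ref{boij1}, and your verifications that $\kappa(B)=\tau(\Z)$, $\kappa(B/J)=j$, and that the degree shift $-\kappa(B/J)-r+d-1$ equals $-j-r$ are precisely the routine bookkeeping the paper leaves implicit.
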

Our Main Theorem \ref{main} and M. Boij's theorem imply
\begin{corollary}\label{boijus1} Let $\Z$ be a locally Gorenstein zero-dimensional scheme of $\mathbb P^n$. Then
there
\linebreak are ideals $J$ of $\mathcal O_\Z=R/I_\Z$ satisfying the conclusions of Corollary \ref{boij2},
with
$\mathcal O_\Z/J$ of socle degree $j$,
provided
$j\ge
\max\{\beta (\Z),2\tau(\Z)+1\}$. Any such ideal has the form $J=\Ann(F)/I_\Z, F\in (I_\Z^{-1})_j\subset
\Gamma_j$. Also, if
$j\ge 2\tau(\Z)+1$ and
$F$ is any element of
$(I_\Z^{-1})_j$ such that
$H_F=H(R/\Ann(F))=\Sym(H_\Z,j)$, then $J=\Ann(F)/I_\Z$ is isomorphic to the dualizing module of $\Z$.
\end{corollary}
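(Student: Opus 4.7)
The plan is to combine Theorem \ref{main} with Corollary \ref{boij2} applied to $B = \mathcal O_\Z$. First I would set up the correspondence between the two objects the statement talks about: to any $F \in (L_\Z)_j = (I_\Z^{-1})_j$, since $I_\Z \subset \Ann(F)$, the quotient $J := \Ann(F)/I_\Z$ is an ideal of $\mathcal O_\Z$ with $\mathcal O_\Z/J \cong R/\Ann(F)$. Conversely, any $J \subset \mathcal O_\Z$ with $\mathcal O_\Z/J$ Gorenstein Artin of socle degree $j$ corresponds, via Macaulay's duality (Lemma \ref{pairVS}, Lemma \ref{sat}), to a one-dimensional inverse system generated by some $F \in \Gamma_j$ annihilated by $I_\Z$, hence $F \in (L_\Z)_j$. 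So the set of ideals $J$ as in the statement is exactly $\{\Ann(F)/I_\Z \mid F \in (L_\Z)_j,\ H_F \text{ has socle degree } j\}$.

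Next, for the existence assertion, if $j \ge \beta(\Z)$ then Theorem \ref{main}(\ref{main1}) produces a general enough $F \in (L_\Z)_j$ with $H(R/\Ann(F)) = \Sym(H_\Z,j)$; in particular $(H_F)_j = 1$, so $R/\Ann(F)$ is Gorenstein Artin of socle degree $j$, giving an ideal $J$ as required.

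The key step is to verify the hypothesis of Boij's result, namely that the initial degree of $J$ in $\mathcal O_\Z$ is at least $\kappa(\mathcal O_\Z) + 2 = \tau(\Z)+2$ (the equality $\kappa(\mathcal O_\Z)=\tau(\Z)$ follows from $(1-z)\Hilb_{\mathcal O_\Z}(z) = \sum \Delta H_\Z(i) z^i$ having degree exactly $\tau(\Z)$). I would do this by comparing Hilbert functions: the assumption $H_F = \Sym(H_\Z,j)$ gives $(H_F)_i = (H_\Z)_i$ for $i \le j - \tau(\Z)$, since for $i \le j/2$ this is the definition of $\Sym$, while for $j/2 \le i \le j-\tau(\Z)$ we have $j-i \ge \tau(\Z)$, so $\Sym(H_\Z,j)_i = (H_\Z)_{j-i} = s = (H_\Z)_i$. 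Combined with $I_\Z \subset \Ann(F)$, the equality of Hilbert functions in degree $i \le j-\tau(\Z)$ forces $(I_\Z)_i = \Ann(F)_i$ there, so $J_i = 0$ for $i \le j-\tau(\Z)$. Hence the initial degree of $J$ is at least $j-\tau(\Z)+1$, and the hypothesis $j \ge 2\tau(\Z)+1$ gives $j-\tau(\Z)+1 \ge \tau(\Z)+2$, as needed. Corollary \ref{boij2} then supplies the isomorphism $J \cong \omega_{\mathcal O_\Z}$.

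The main obstacle --- and really the only subtle point --- is the Hilbert-function bookkeeping in the last step, since $\Sym(H_\Z,j)$ depends on whether $i$ lies in the ``symmetric middle'' or the ``tail''; the rest is an essentially formal combination of Theorem \ref{main} (which supplies the form $F$ and the Hilbert-function control) with Boij's criterion (which turns the initial-degree bound into the dualizing-module identification).
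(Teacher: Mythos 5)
Your proposal is correct and follows essentially the same route as the paper: Macaulay's correspondence (principal inverse system $\leftrightarrow$ Gorenstein Artin quotient) to identify every such $J$ as $\Ann(F)/I_\Z$, Theorem \ref{main}(i) for existence of $F$ with $H_F=\Sym(H_\Z,j)$ when $j\ge\beta(\Z)$, and the Hilbert-function comparison showing the initial degree of $\Ann(F)/I_\Z$ is at least $\tau(\Z)+2$ so that Corollary \ref{boij2} applies. The paper leaves the bookkeeping $(H_F)_i=(H_\Z)_i$ for $i\le j-\tau(\Z)$ implicit; you have simply written it out.
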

\begin{proof} The second statement follows from M. Boij's theorem for $B=\mathcal O_\Z$, and Macaulay's result
connecting the socle of $R/J'$ for an Artinian quotient, and
generators of the inverse system of $J'$ (see Corollary
\ref{macdual}); $J'$ is Gorenstein if and only if ${J'}^{-1}$ is
principal. The third statement follows from Corollary \ref{boij2}
and the definition of $\Sym(H_\Z,j)$ (see Equation \eqref{symhz});
the restriction $j\ge 2\tau(\Z)+1$ and $H_F=\Sym(H_\Z,j)$ implies
that the order of $\Ann(F)/I_\Z$ is at least $\tau(\Z)+2$,
satisfying the hypotheses of M. Boij's theorem, and $\mathcal
O_\Z/J\cong R/\Ann(F)$, so is Gorenstein. By Theorem \ref{main} and
Corollary \ref{Thm1}, such $F$ exist with $H_F=\Sym (H_\Z,j)$ if
$j\ge \beta(\Z)$.
\end{proof}\par \noindent
M. Boij showed that when $\Z$ is smooth, then the conclusions of
Corollary \ref{boijus1} hold also for $j\ge 2\tau(\Z)-1$. His work
is related to that of M. Kreuzer in \cite{Kr1,Kr}. Corollary
\ref{boijus1} can be used as a test of whether a Gorenstein scheme
is arithmetically Gorenstein, since $\Z$ is aG if and only if the
dualizing module is principal.
\begin{example}\label{critagorex2} Let $\Z=\Z(1)\cup \Z(2)\subset \mathbb P^3$ be the scheme of Example
\ref{critagorex1}, where $\Z(1)=p, p=(1:1:1:1)$ is a smooth point,
and $\Z(2)=\Proj(R/I(2))$ where $ I(2)=(x_1,x_2^2,x_3^2)$, is a CI
at $p_0= (0:0:0:1)$. We found there that $\Z$ was not arithmetically
Gorenstein, although $\Delta H_\Z = (1,3,1)$ is the $h$-vector of a
Gorenstein ideal (it is a ${\it Gorenstein \,sequence}$). Since
$\alpha (\Z)=\tau(\Z)=2$, we have $\beta
(\Z)=\tau(\Z)+\alpha(\Z)=4$. By Corollary \ref{boijus1}, it suffices
to take a general element $F\in (L_\Z)_5$, to see the dualizing
module as the ideal $J=\Ann(F)/I_\Z$. We have, taking
$L_p=X_1+X_2+X_3+Z$
\begin{equation*} (L_\Z)_5=\langle X_2Z^{[4]},X_3Z^{[4]},X_2X_3Z^{[3]},Z^{[5},L_p^{[5]}\rangle .
\end{equation*}
A calculation shows that with
$F=X_2Z^{[4]}+X_3Z^{[4]}+X_2X_3Z^{[3]}+Z^{[5]}+L_p^{[5]}$, we have
$H_F=\Sym(H_\Z,5) = (1,4,5,5,4,1)$, and that
$\Ann(F)=(I_\Z,x_2x_3z^2-x_2z^3-x_3z^3+z^4, x_1z^4-z^5/2)$. The
dualizing module $\Ann(F)/I_\Z$ is not principal, confirming that
$\Z$ is not arithmetically Gorenstein.
\end{example}
We now give an example showing how inverse systems behave under linkage: here $\Z=\Z(1)\cup\Z(2)$ is AG
(even CI).
\begin{example}{\it Inverse systems of linked local CI's.}\label{exlink}
We consider inverse systems of three ideals in $R={{\sf k}}[x_1,x_2,z]$ defining
punctual subschemes $\Z=\Z(1)\cup \Z(2)$ of
 $\mathbb P^2$.  The ideal $I(1)=(x_1-z,x_2)=M(p_1)$ defines the simple point $\Z(1)=p_1=(1,0,1)$. The
ideal
$I(2)$, concentrated at $p=(0:0:1)$ defines a degree $5$ scheme $\Z(2)$, that of Example \ref{regdeg},
(there termed
$\Z$), which is a local complete intersection:
\begin{equation*}
 I(2)=I_{\Z (2)}=(x_1x_2,x_1^2z-x_2^3,x_1^3),
\end{equation*}
of Hilbert function $H_{\Z(2)}=(1,3,5,5,\ldots )$. Their
intersection is the ideal $I=(x_1-z,x_2)\cap I_{\Z(2)} =
(x_1x_2,x_1^3+x_2^3-x_1^2z)$, a complete intersection defining the
degree 6 punctual scheme $\Z$, of Hilbert function
$H_\Z=(1,3,5,6,6,\ldots )$. Thus $I(1)$ and $I(2)$ determine the two
irreducible components of $\Z$, which are linked through $\Z$.
Letting $W=I^{-1},W(1)=I(1)^{-1}$, and $W(2)=I(2)^{-1}$ denote the
corresponding inverse systems, we have $W=W(1)+W(2)$, where the sum
must be direct in degrees at least $\tau (\Z)=3$ by Theorem
\ref{decompIS} (\ref{directs}). The inverse system $W(1)$ satisfies
$W(1)_i=\langle (X_1-Z)^{[i]}\rangle$, while $W(2)$ satisfies, from
Example \ref{regdeg}
\begin{equation*}
W(2)_i=\langle
X_1^{[2]}Z^{[i-2]}+X_2^{[3]}Z^{[i-3]},X_2^2Z^{[i-2]},X_2Z^{[i-1]},X_1Z^{[i-1]},Z^{[i]}\rangle
\end{equation*}
By the Decomposition Theorem \ref{decompIS}(i), we have
\begin{align}
  W_i&=W(1)_i+W(2)_i\notag \\
 &=\langle (X_1-Z)^{[i]},
X_1^{[2]}Z^{[i-2]}+X_2^{[3]}Z^{[i-3]},X_2^2Z^{[i-2]},X_2Z^{[i-1]},X_1Z^{[i-1]},Z^{[i]}\rangle
.\notag
\end{align}
Note that the above sum is direct in degrees at least three, but not
direct in degrees less or equal two, as is evident by regarding
$H_{\Z(1)}=(1,1,\ldots )$ and $H_{\Z(2)}, H_{\Z}$. By
the Decomposition Theorem \ref{decompIS}(iii) we have
\[
W(2)_i=W_i\cap \langle {{\sf k}}[X_1,X_2]_{\le 3} \cdot_{rp} {{\sf k}}[Z]\rangle _i,
\]
the intersection of $W$ and the inverse system of ${m_p}^4$ (here $4=\alpha (\Z(2))+1$), whenever the
dimension of the right side is $5$, which occurs for $i\ge 4$. \par
That $\Z$ is AG can be seen from the inverse system, following Lemma \ref{socle}, by showing that
 $L_\Z\cap \Gamma_z=W\cap {{\sf k}}_{DP}[X_1,X_2]$ is a principal $R'={{\sf k}}[x_1,x_2]$-module: in fact, $G=X_1^{[3]}-X_2^{[3]}$
generates this intersection. \par Finally, from the properties of
linkage, $I(1)/I$ has dualizing module isomorphic to $R/(I(2))$, and
conversely $I(2)/I$ has dualizing module $R/I(1)$. In particular the
number of generators of $I(1)/I$ (here two) is the same as the
dimension of $\Soc(R/I(2))$ and the number of generators of $I(2)/I$
(here one) is $\dim_{\sfÊk} \Soc(R/I(1))$.  In addition, since $R/I$ is
locally Gorenstein, similar properties hold for the localizations at
$p,p_1$. Here at $p_1$, $(R/I(1))_{p_1}\cong R'/m_{p_1}$, has
one-dimensional socle, and $m_{p(1)}\cong I_{p_1}$, the
localization, so there are zero generators of the quotient. Also,
$(R'/I(2))_{p_1}=0$, so has zero socle, and $I(2)_{p_1}=R'_{p_1}$
has one generator.
\end{example} \noindent

\subsection{Generalized additive decompositions.}\label{gad}
We recall the GAD given in \eqref{gad1} for a degree-$j$ form of $\Gamma = {{\sf k}}[X,Y]$,
namely
\begin{equation*}
F=\sum_i B_iL_i^{[j+1-s_i]}, \deg B_i=s_i-1, \deg L_i=1, s=\sum s_i.
\end{equation*}
Each term $B_iL_i^{[j+1-s_i]}$ corresponds to a single support point
$p_i: l_i=0$ of $\mathbb P^1$, occuring with  multiplicity $s_i$.
Our aim is to model this kind of decomposition in $r\ge 3$
variables. The following definition is more general than that of
\cite[Def. 4A]{I4}, but is related to the concept of annihilating
scheme introduced there \cite[Def. 4D]{I4} (see Definition
\ref{tight} above).
\begin{definition}\label{GAD2} For $F\in \Gamma_j$, we say that $F=F_1+\cdots +F_k$ is a
\emph{generalized additive decomposition} (GAD) of $F$, having
(total) length $s=\sum s_i$, of partition $\pi=(s_1,\ldots ,s_k)$,
with $k$ parts, associated to the scheme $\Z$, if $\Z$ is a
degree-$s$ punctual scheme $\Z$ whose decomposition into irreducible
schemes is $\Z=\cup \Z_i$, where $\deg \Z_i=s_i$, and each $F_i\in
{I_{\Z_i}}^{\perp}$ for $i=1,\ldots ,k$.  We say that a GAD of $F$
is $\it{tight}$ if $\Z$ is a tight annihilating scheme of $F$:
namely, if $s=\deg \Z=\max_i \{ (H_F)_i\}$ (Definition \ref{tight}).
We say that a GAD is \emph{unique} if the $k$ summends $F_1,\ldots ,
F_k$ are unique.
\end{definition}\noindent
The form of each term $F_i$, corresponding to $\Z_i$, can be read
from Theorem \ref{Homogp} or Proposition~\ref{satinvsysp}; $F_i$ is
an element of the degree-$j$ homogenization of the local inverse
system of $\Z_i$.
\begin{lemma} If $F$ has a length-$s$ GAD, then $\forall i \ge 0$, we have $(H_F)_i\le s$.
\begin{proof} We have $\Ann(F)\supset \mathcal I_\Z$, hence $(H_F)_i\le (H_\Z)_i$, but $(H_\Z)_i$
is bounded above by $\deg \,\Z$.
\end{proof}
\end{lemma}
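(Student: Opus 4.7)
The plan is to chain two elementary inclusions: an ideal containment coming from the GAD, and the known bound on the Hilbert function of a zero-dimensional scheme from Theorem \ref{zero-dim}.

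First I would unpack the GAD hypothesis. Given $F = F_1 + \cdots + F_k$ with $F_i \in I_{\Z_i}^{\perp}$, I use the fact that $I_\Z = \bigcap_i I_{\Z_i}$ (recalled in the discussion preceding Definition \ref{GAD2}, and implicit in the primary decomposition used throughout Section \ref{schfinsup}). Any $h \in I_\Z$ therefore belongs to every $I_{\Z_i}$, so $h \circ F_i = 0$ for each $i$, and by linearity $h \circ F = 0$. This gives $I_\Z \subset \Ann(F)$, so $R/\Ann(F)$ is a graded quotient of $R/I_\Z$, yielding the degreewise inequality
\begin{equation*}
(H_F)_i = \dim_{\sf k}(R/\Ann(F))_i \le \dim_{\sf k}(R/I_\Z)_i = (H_\Z)_i.
\end{equation*}

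Next I would apply Theorem \ref{zero-dim}(\ref{zeroi}): since $\Z$ is a zero-dimensional subscheme of $\mathbb P^n$ of degree $s = \sum s_i$, the Hilbert function $H_\Z$ is nondecreasing and stabilizes at the value $s$, hence $(H_\Z)_i \le s$ for every $i \ge 0$. Combining the two inequalities gives $(H_F)_i \le s$ as required.

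There is no real obstacle here; the only small point to verify is the identity $I_\Z = \bigcap_i I_{\Z_i}$ for the union of the components of $\Z$, which follows directly from the primary decomposition structure already used (e.g.\ the proof of Theorem \ref{decompIS}). Everything else is immediate from the definition of GAD and from Theorem \ref{zero-dim}.
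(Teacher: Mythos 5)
Your proof is correct and follows essentially the same route as the paper: the containment $I_\Z\subset\Ann(F)$ (which the paper asserts directly from the GAD definition) gives $(H_F)_i\le (H_\Z)_i$, and the bound $(H_\Z)_i\le s$ comes from Theorem \ref{zero-dim}. You have merely spelled out the two steps the paper leaves implicit.
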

Which forms $F$ have a length-$s$ GAD? When is the GAD for $F$ unique? Recall that
we denote by
$\sigma (\Z)$ the
regularity degree of
$\Z$ (see Theorem~\ref{zero-dim}), and by $\tau (\Z)=\sigma(\Z)-1$. Evidently we have
\begin{lemma}\label{GADun1} If $F$ is annihilated by a zero-dimensional scheme $\Z,\Z=\Z_1\cup \cdots \cup
\Z_k$ as in Definition \ref{tight},  then $F$ has a
 GAD of length
$\le s$ associated to
$\Z$. If also
$\deg F\ge
\tau(\Z)$, then the GAD has length $s$, is of partition $(s_1,\ldots ,s_k), s_i=\deg \Z_i$, and
this GAD is the unique GAD of
$F$ that is associated to $\Z$.
\end{lemma}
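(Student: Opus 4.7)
The plan is to deduce the lemma directly from the Decomposition Theorem \ref{decompIS}. Let $j=\deg F$. The hypothesis that $F$ is annihilated by $\Z$ means precisely that $F\in (I_\Z^{\perp})_j=(L_\Z)_j$. By Theorem \ref{decompIS}(\ref{sum}) we have
\begin{equation*}
(L_\Z)_j = L(1)_j+\cdots+L(k)_j, \qquad L(i)=L_{\Z_i}=I_{\Z_i}^{\perp},
\end{equation*}
so there exist $F_i\in L(i)_j$ with $F=F_1+\cdots+F_k$. Each $F_i$ is annihilated by $I_{\Z_i}$, so by Definition \ref{GAD2} the equality $F=F_1+\cdots+F_k$ is a GAD of $F$ associated to $\Z$, of total length at most $s=\sum s_i$ (strictly less exactly when some $F_i$ vanishes, in which case the decomposition is also associated to the proper subscheme obtained by deleting those components).

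For the second assertion, suppose $j\ge \tau(\Z)=\sigma(\Z)-1$. Theorem \ref{decompIS}(\ref{directs}) then asserts that the sum above is direct,
\begin{equation*}
(L_\Z)_j = L(1)_j\oplus\cdots\oplus L(k)_j,
\end{equation*}
since in this range one has $\dim_{\sf k}(L_\Z)_j=\sum_i\dim_{\sf k} L(i)_j=\sum_i s_i=s$. Consequently the components $F_i$ in any decomposition $F=F_1+\cdots+F_k$ with $F_i\in L(i)_j$ are uniquely determined by $F$. Any GAD of $F$ associated to $\Z$ is, by Definition \ref{GAD2}, precisely such a decomposition, so uniqueness of the GAD is immediate; its partition is $(s_1,\ldots,s_k)$ with $s_i=\deg\Z_i$ and its total length is $s$ by the very bookkeeping of the definition.

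No substantial obstacle arises: the entire content of the lemma is a direct translation of Theorem \ref{decompIS}(\ref{sum})--(\ref{directs}) into the language of generalized additive decompositions. The only subtlety worth flagging explicitly is the distinction between an associated scheme $\Z$ and a tight annihilating one: if some $F_i=0$, the decomposition is still a valid GAD associated to $\Z$ (of total length $s$ when each zero summand is counted with its nominal $s_i$), but one obtains shorter GADs by passing to the smaller scheme supporting the nonzero terms.
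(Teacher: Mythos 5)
Your argument is correct and is essentially the paper's own: the paper likewise obtains existence from $ (I_\Z)^{\perp}=\sum_i (I_{\Z_i})^{\perp}$ and uniqueness from the fact that for $j\ge\tau(\Z)$ one has $(H_\Z)_j=s$, so that $(I_\Z)_j^{\perp}=(I_{\Z_1})_j^{\perp}\oplus\cdots\oplus(I_{\Z_k})_j^{\perp}$, which is exactly the content of Theorem \ref{decompIS}(\ref{sum}),(\ref{directs}) that you invoke. Your remark distinguishing the case where some $F_i=0$ (giving a shorter GAD associated to a proper subscheme) is a reasonable reading of the ``length $\le s$'' clause and does not affect the argument.
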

\begin{proof} For $j\ge \tau(\Z)$ we have
$(H_\Z)_j=s$, hence $(I_\Z)_j^{\perp}=(I_{\Z_1})_j^{\perp}\bigoplus
\cdots \bigoplus (I_{\Z_k})_j^{\perp}$, and the GAD is unique.
\end{proof}\par
The following result is an immediate consequence of \cite[Theorem
5.31]{IK}, and Definition \ref{GAD2}. It does not extend simply to
$r>3$ (see \cite[Theorem 6.42]{Bj4}, \cite{ChoI2}, and the
discussion in \cite[\S 6.4]{IK}).
\begin{theorem}{\sc Uniqueness of GAD when $r=3$.}\label{GADd3} If $r=3$ and $H_F\supset (s,s,s)$
then
$F$ has a unique tight GAD of length $s$, up to permutation and change of scale, and no GAD's of
smaller length than $s$.
\end{theorem}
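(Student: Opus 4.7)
The plan is to reduce the statement to two inputs already in hand: the uniqueness theorem for tight annihilating schemes in $\mathbb P^2$ (\cite[Theorem 5.31]{IK}, cited in the discussion preceding Theorem \ref{GADd3}), and Lemma \ref{GADun1} above, which takes care of uniqueness of the GAD once the underlying scheme is fixed.

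First I would invoke \cite[Theorem 5.31]{IK} to extract a unique zero-dimensional degree-$s$ annihilating scheme $\Z\subset\mathbb P^2$ of $F$. Since $\mathcal I_\Z\subset \Ann(F)$ gives $(H_F)_i\le (H_\Z)_i\le s$ for all $i$, while the hypothesis $H_F\supset (s,s,s)$ forces $\max_i (H_F)_i=s$, the scheme $\Z$ is tight in the sense of Definition \ref{tight}. Decomposing $\Z=\Z_1\cup\cdots\cup\Z_k$ into irreducible components with $s_i=\deg \Z_i$, we have $s=\sum s_i$.

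Next I would check the hypothesis $\deg F\ge \tau(\Z)$ of Lemma \ref{GADun1}. If $a$ is an index with $(H_F)_a=(H_F)_{a+1}=(H_F)_{a+2}=s$, then symmetry of $H_F$ about $j/2$ (where $j=\deg F$) forces $j\ge a+2$, while the inequality $s=(H_F)_a\le (H_\Z)_a\le s$ together with the definition $\tau(\Z)=\min\{i\mid (H_\Z)_i=s\}$ forces $\tau(\Z)\le a$. Hence $j\ge \tau(\Z)+2$, and Lemma \ref{GADun1} produces a unique length-$s$ GAD $F=F_1+\cdots+F_k$ of partition $(s_1,\ldots,s_k)$ associated to $\Z$; by construction this GAD is tight.

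Finally, I would verify the two remaining uniqueness assertions. For minimality: a GAD of length $s'<s$ associated to some scheme $\Z'$ would exhibit $\Z'$ as an annihilating scheme of $F$ of degree $s'$, yielding the contradiction $s=(H_F)_a\le (H_{\Z'})_a\le s'<s$. For uniqueness of the tight length-$s$ GAD: any such GAD is by definition associated to some degree-$s$ annihilating scheme, which by \cite[Theorem 5.31]{IK} must coincide with $\Z$; then Lemma \ref{GADun1} forces the decomposition to agree with the one already produced, up to reordering the indices. The only real content of the argument is the appeal to \cite[Theorem 5.31]{IK}; once the tight annihilating scheme is pinned down, everything else is formal from the direct-sum decomposition $(I_\Z)^\perp_i=\bigoplus_u(I_{\Z_u})^\perp_i$ valid for $i\ge \tau(\Z)$.
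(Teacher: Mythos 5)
Your proposal is correct and follows the same route as the paper: the paper's proof also consists of invoking \cite[Theorem 5.31]{IK} to get the unique tight annihilating scheme $\Z$ and then applying Lemma \ref{GADun1} (the paper notes $j\ge 2\sigma(\Z)$, while your $j\ge\tau(\Z)+2$ is what is actually needed, and either suffices). Your extra verifications of tightness, of the degree hypothesis, and of the minimality of $s$ are just the details the paper leaves implicit.
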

\begin{proof}By Theorem 5.31 of \cite{IK}, $F$ has a unique tight annihilating scheme $\Z$; this
determines a unique GAD by Lemma \ref{GADun1}, since $j=\deg F>\sigma(\Z)$ (as here we have $j\ge
2\sigma(\Z)$).
\end{proof} \par
Recall from Definition \ref{socled} that $\alpha (\Z)$ is the highest socle degree of a
component of $\Z$. Finally we have,
\begin{theorem}\label{GAD3} Suppose that $\Z$ is a Gorenstein zero-dimensional subscheme of $\mathbb P^n$,
and that
$j\ge
\max\{ \tau(\Z)+\alpha(\Z), 2\tau (\Z)+1\}$. If $F$ is a general enough element of
${(I_\Z)_j}^\perp$, then $F$ has a unique GAD of length $s$ associated to $\Z$, and no
GAD's of length less than $s$.
\end{theorem}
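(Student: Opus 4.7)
The plan is to reduce Theorem \ref{GAD3} directly to Theorem \ref{main} together with Lemma \ref{GADun1}. The hypothesis $j\ge\max\{\tau(\Z)+\alpha(\Z),\,2\tau(\Z)+1\}$ is precisely $j\ge\max\{\beta(\Z),\,2\tau(\Z)+1\}$, which is the hypothesis of Theorem \ref{main}(iii). So for a general enough $F\in(L_\Z)_j=(I_\Z)^{-1}_j$, one has simultaneously: (a) $H_F=H(R/\Ann(F))=\Sym(H_\Z,j)$ from part (i); (b) $\Ann(F)_i=(I_\Z)_i$ for $\tau(\Z)\le i\le j-\alpha(\Z)$ from part (ii); and (c) $\Ann(F)$ determines $\Z$ uniquely from part (iii). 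In particular $\Z$ is the unique tight annihilating scheme of $F$, since $j\ge 2\tau(\Z)+1$ forces $\max_i(H_F)_i=(H_\Z)_{\tau(\Z)}=s$.

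Next I would invoke Lemma \ref{GADun1}. Because $\Z=\Z_1\cup\cdots\cup\Z_k$ annihilates $F$ and $j=\deg F\ge\tau(\Z)$, the lemma produces a GAD of length $s$ associated to $\Z$, namely the unique decomposition coming from the direct sum
\begin{equation*}
(I_\Z)_j^{\perp}=(I_{\Z_1})_j^{\perp}\oplus\cdots\oplus(I_{\Z_k})_j^{\perp},
\end{equation*}
which is a genuine direct sum by Theorem \ref{decompIS}(\ref{directs}) in degrees $\ge\sigma(\Z)-1=\tau(\Z)$. This gives existence and uniqueness of a GAD associated to $\Z$.

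It remains to rule out GADs of length strictly less than $s$. Suppose for contradiction that $F$ admits a GAD of total length $s'<s$ associated to some zero-dimensional scheme $\Z'=\Z'_1\cup\cdots\cup\Z'_{k'}$ of degree $s'$. By Definition \ref{GAD2}, each summand $F_i'$ lies in $(I_{\Z'_i})^{\perp}$, hence $I_{\Z'}\subset\Ann(F)$ and $\Z'$ is an annihilating scheme of $F$. But from (a) above, $(H_F)_{\tau(\Z)}=s$, while $(H_{\Z'})_i\le\deg\Z'=s'<s$ for every $i$; since $I_{\Z'}\subset\Ann(F)$ forces $(H_F)_i\le(H_{\Z'})_i$, this yields $s\le s'$, a contradiction.

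The only delicate point is the final "no shorter GAD" step: a priori the components $\Z'_i$ of $\Z'$ need not be contained in $\Z$, so one cannot directly compare $\Z'$ with the decomposition of $\Z$. The argument above sidesteps this by reducing everything to the Hilbert function bound $(H_{\Z'})_i\le\deg\Z'$, which is valid for any zero-dimensional scheme. The rest is a clean bookkeeping exercise once Theorem \ref{main} is in hand; no additional machinery is required.
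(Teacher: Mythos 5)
Your proposal is correct and follows essentially the same route as the paper: Theorem \ref{main} gives $H_F=\Sym(H_\Z,j)$, Lemma \ref{GADun1} gives existence and uniqueness of the GAD associated to $\Z$, and the bound $(H_F)_i\le (H_{\Z'})_i\le\deg\Z'$ for any annihilating scheme $\Z'$ excludes GADs of length less than $s$. The paper's proof additionally shows, via regularity in degree $\lfloor j/2\rfloor+1$ and Theorem \ref{zero-dim}, that every annihilating scheme of degree at most $s$ equals $\Z$; your side remark that $\Z$ is the \emph{unique} tight annihilating scheme would require that extra step, but it plays no role in your argument for the stated claim.
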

\begin{proof} Let $t=\lfloor j/2\rfloor$. By Theorem \ref{main} the hypotheses on $F$ and $\Z$
imply that $H_F=\Sym (H_\Z,j)$. Furthermore, the assumption on $j$
implies that $(H_F)_t=(H_F)_{t+1}=s$, so $H_F\supset (s,s)$. It
follows that any scheme $\Z'$ of degree at most $s$, such that
$I_{\Z'}\subset I_F$, satisfies $(H_{\Z'})_t=(H_{\Z'})_{t+1}=s$,
hence $(I_{\Z'})_t=(I_F)_{t}$ and $(I_{\Z'})_{t+1}=(I_F)_{t+1}$. By
Theorem \ref{zero-dim} this equality implies that $\Z'$ is regular
in degree $t+1$ (so $\sigma (\Z')\le t+1$), and is determined by
$F$, so we must have $\Z=\Z'$. Uniqueness of the GAD now follows
from Lemma \ref{GADun1}.
\end{proof}
\begin{example} Let $R={{\sf k}}[x,y,z]$ and denote by $\Upsilon$ the degree 3 scheme $\Upsilon=\Proj
(R/(x,y^3))$ concentrated at the origin $p_0=(0:0:1)$ of $\mathbb
P^2$; and denote by $\Z=\Z_1\cup \cdots \cup \Z_k$ the union of $k$
distinct subschemes, where $\Z_i$ denotes a translation of
$\Upsilon$ to a point $p_i=(a_{i0}:a_{i1}:1)\in \mathbb P^2$ (by
$T_{p_i}$ as in Lemma \ref{transforma}). By Theorem \ref{Homogp}, we
have that $\Z_i=\Proj(R/(x-a_{i0}z,(y-a_{i1}z)^3))$, and since the
inverse system $L(\Upsilon)\subset \Gamma ={{\sf k}}_{DP}[X,Y,Z]$ satisfies
$L(\Upsilon)_u=\mathcal (I_\Upsilon)^{\perp}=R_u\circ ( Y^{[2]}\cdot
Z^u)=\langle Y^{[2]}Z^{[u-2]},YZ^{[u-1]},Z^{[u]}\rangle$, we have
\begin{align*}
L(\Z_i)_u&=R\circ (Y^{[2]}\cdot (a_{i0}X+a_{i1}Y+Z)^{[u]})\\
&=\langle Y^{[2]}\cdot
(a_{i0}X+a_{i1}Y+Z)^{[u-2]},Y\cdot (a_{i0}X+a_{i1}Y+Z)^{[u-1]},(a_{i0}X+a_{i1}Y+Z)^{[u]}\rangle .
\end{align*}
Taking $k=2$, letting $p_1=p_0, p_2=(1:1:1)$, we have
\begin{equation*}
\mathcal I_\Z=(x,y^3)\cap (x-z,(y-z)^3)=(x^2-xz,3xy^2-y^3-3xyz+xz^2),
\end{equation*}
and $\Delta H_\Z=(1,2,2,1), H_\Z=(1,3,5,6,6,\ldots)$. By Lemma \ref{GADun1}, a  form $F\in
(I_\Z)_j^{\perp}$ for
$j\ge 3$ has a unique decomposition associated to $\Z$, into $k=2$ parts, each of length 3,
\begin{align*}
F=F_1+F_2\,\text { where } &F_1\in \langle
Y^{[2]}Z^{[j-2]},YZ^{[j-1]},Z^{[j]}\rangle,\\& F_2\in \langle
Y^{[2]}\cdot(X+Y+Z)^{[j-2]},Y(X+Y+Z)^{[j-1]},(X+Y+Z)^{[j]}\rangle .
\end{align*}
When $j=3$, the form $F=3Y^{[3]}+XY^{[2]}\in L_\Z$, as it is evidently annihilated by
$\mathcal I_\Z$ acting as contraction. Thus F has a GAD with two summands, each of length 3,
\begin{equation}F=Y^{[2]}\cdot (X+Y+Z)-Y^{[2]}Z.
\end{equation}\par
By Theorem \ref{GAD3}, since when $k=2$, $\Z$ is Gorenstein with
$\tau (\Z)=3$ and $\alpha (\Z)=2$, we have for $j\ge 7$ that a
general $F\in (L_\Z)_j$ has a tight annihilating scheme $\Z$, so a
unique GAD of length 6. However, if $j\ge 6$, and $F$ includes the terms
$Y^{[2]}(X+Y+Z)^{[j-2]}$ and $Y^{[2]}Z^{[j-2]}$, then it is
easily seen that $F$ determines $\Z$, as $I_\Z$ is generated in
degree 3, and $H_F=\Sym(H_\Z,j)$ by calculation.
\par Taking $k=3$, using translates of
$\Upsilon$ at the three points $p_1,p_2$, and $p_3=(2,3,1)$ we find
$\Delta H_\Z=(1,2,3,3)$; taking $k=4$ and points $p_1,p_2,p_3$, and
$p_4=(7,11,1)$ we find $\Delta H_\Z=(1,2,3,4,2)$. However, if we
take instead $p_4'=(2,5,1)$ we find $\Delta H_{\Z'}=(1,2,3,3,2,1)$.
We might ask, for a generic choice of $k$ points $\{p_i\}$, do we
obtain a degree $3k$ scheme $\Z$ in ${\it general\, position}$ -
having the same Hilbert function as $3k$ generic smooth points? This
is not the case for $k=2$ here, but is true for $k=3,4$, and presumably
for higher $k$.  \par Also, we may ask, what is the dimension of the
family $\mathcal F(\Upsilon,k,\mathbf P^2)$ of all degree $3k$
punctual subschemes of $\mathbb P^2$ having the form $\Z=\Z_1\cup
\dots \cup \Z_k$, with $\Z_i$ a translate of $\Upsilon$? In this
direction, the tangent space to such families have been studied
classically for power sum representations $F=\sum L_{p_i}^{[j]}$
(see \cite{T2,Br,AH,I4}, \cite[\S 2.1,2.2]{IK}, and for GAD's see
\cite{Eh,Tes}, also \cite{Ch2}).
\end{example}
\begin{remark}\label{rem324}
 In a sequel paper
\cite{ChoI2} we determine the global Hilbert functions $H_\Z$ for
compressed Gorenstein subschemes $\Z\subset \mathbb P^n$. Let $r=n+1$ and 
denote by $H_s(r)$ the sequence $H_s(r)_i=\min\{ \dim_{\sfÊk} R_i,s\}$. Then
$H_s(r)$ is the global Hilbert function of a generic degree-$s$
smooth scheme. We will show in the sequel that if $\Z$ is a general enough compressed
local Gorenstein scheme of degree $s$, then $H_\Z=H_s(r)$. Using
Theorem~\ref{main}, we will exhibit families $\mathbb
P\mathbf{GOR}(T)$ of graded Gorenstein Artin algebras of embedding
dimension $r$ and certain Hilbert functions
$T=H(s,j,r)=\Sym(H_s(r),j), r\ge 5, s$ large enough given $r$, that
contain several irreducible components. Each component is fibred
over a family of Gorenstein zero-dimensional schemes, with fibre an
open in a projective space $\mathbb P^{s-1}$.
 One component is fibred over general enough smooth schemes
$\Z\subset \mathbb P^n, n=r-1$ of degree $s$. The other component is
fibred over a family of compressed Gorenstein subschemes. Here
$T=H(s,j,r)=\Sym(H_s(r),j)$ and $ H(s,j,r)_i=\min\{ r_i,r_{j-i},s\}$
is the Hilbert function of GA algebras $R/\Ann (F),
F=L_1^{[s]}+\cdots +L_s^{[j]}\in \Gamma$, determined by a dual
generator $F$ that is a sum of $s$ general enough (divided) powers
of linear forms. Some of these results were reported in \cite[\S
6.4]{IK}. \par
\end{remark}
{\bf Acknowledgements}\par \smallskip
 We appreciated suggestions for the earlier version of Tony Geramita and the late Ruth
Michler, of V.~Kanev concerning the GAD viewpoint, discussions with Yuri Berest
on projective closure and with Peter Schenzel concerning the connections with dualizing modules, and
as
 well the comments of Bae-Eun Jung. 
The second author
gratefully acknowledges the influence of many discussions begun some
years ago with Jacques Emsalem concerning
inverse systems and ``points \'{e}pais'' \cite{Em}.\par

\bibliographystyle{amsalpha}

\begin{thebibliography}{ACGHM}

\bibitem[Ab]{Ab}
L. Abrams, \emph{Two-dimensional topological quantum field theories
and Frobenius algebras},
 J. Knot Theory Ramifications {\bf 5} (1996), no. 5, 569 - 587.

\bibitem[AlH]{AH}
J. Alexander and  A. Hirschowitz, \emph{Polynomial interpolation in
several variables},
 J. Algebraic Geometry { \bf 4} (1995), 201--222.

\bibitem[BGS]{BSE}
D. Bayer, D. Grayson, and M. Stillman,  \emph{Macaulay, A system for
computation in algebraic geometry and commutative algebra},
2000, http://www.math.columbia.edu/$\sim$ bayer/Macaulay/.

\bibitem[BeRa]{BeRa}
A. Bernardi and K. Ranestad, \emph{The cactus rank of cubic forms}, Preprint, 2011, ArXiv arXiv:1110.2197.

\bibitem[BeI]{BeI}
D. Bernstein and A. Iarrobino A, \emph{A nonunimodal graded
Gorenstein Artin algebra in codimension five}, Comm. Algebra {\bf
20} (1992), no. 8, 2323-2336.

\bibitem[Bo1]{Bj2}
M. Boij, \emph{Graded Gorenstein Artin algebras whose Hilbert
functions have a large number of valleys}, Comm. Algebra {\bf 23}
(1995), no. 1, 97-103.

\bibitem[Bo2]{Bj1}
\bysame , \emph{{G}orenstein {A}rtin algebras and points in
projective space},  Bull. London Math. Soc. {\bf 31} (1999), no. 1,
11--16.

\bibitem[Bo3]{Bj4}
\bysame, \emph{Components of the space parametrizing graded
 Gorenstein Artin algebras with a given
 Hilbert function}, Pacific J. Math. {\bf 187} (1999), no. 1, 1--11.


\bibitem[BoL]{BjL}
M. Boij and D. Laksov, \emph{Nonunimodality of graded Gorenstein
Artin algebras},
 Proc. Amer. Math. Soc. {\bf 120} (1994), 1083--1092.

\bibitem[Bra]{Bra}
J. Brachat, \emph{Sch\'{e}mas de Hilbert, D\'{e}composition de tenseurs},
Th\`{e}se, Univ. de Nice-Sophia-Antipolis, Juillet, 2011.

\bibitem[BrOt]{BrOt}
M.C. Brambilla and G. Ottaviani, \emph{On the Alexander-Hirschowitz theorem} J. Pure Appl. Algebra 212 (2008), no. 5, 1229--1251. 
\bibitem[BS]{BS}
M. Brodman and R. Sharp, \emph{Local cohomology:
 an algebraic introduction with geometric applications}, Cambridge Studies in
Advanced Mathematics 60, Cambridge University Press, Cambridge,
1998.

\bibitem[Bro]{Br}
J. Bronowski, \emph{The sums of powers as canonical expression},
Proc. Cambridge Philos. Soc.
 {\bf 29} (1933), 69-82.

\bibitem[BH]{BH}
W. Bruns and  J. Herzog, \emph{Cohen-Macaulay Rings}, Cambridge
Studies in Advanced Mathematics  39,
 Cambridge University Press, Cambridge, U.K., 1993; revised paperback
edition, 1998.

\bibitem[BuB]{BuB}
W. Buczy\'{n}ska and J. Buczy\'{n}ski, \emph{Secant varieties to high degree Veronese reembeddings, catalecticant matrices, and smoothable Gorenstein schemes}, preprint, to appear J. Alg. Geometry. arXiv:1012.3563 (v.4 November 2011).

\bibitem[Cha1]{Ch1}
K. Chandler, \emph{Higher infinitesimal neighborhoods},  J. Algebra
{\bf 205} (1998), 460-479.

\bibitem[Cha2]{Ch2}
\bysame , \emph{A brief proof of the maximal rank theorem for
   generic double points in projective space}, Trans. Amer. Math. Soc. {\bf 753}
   (2001), no. 5, 1907-1920.

\bibitem[CharP]{CharP}
M. Chardin and P. Philippon, \emph{Regularit\'{e} et interpolation},
J. Algebraic Geom. {\bf 8} (1999), 471-481.

\bibitem[ChoI1]{ChoI}
Y. Cho and  A. Iarrobino, \emph{Hilbert functions and level algebras}, J. Algebra {\bf 241}
(2001), no. 2, 745-758.

\bibitem[ChoI2]{ChoI2}
\bysame and \bysame, \emph{Gorenstein Artin algebras arising
from punctual schemes in projective space}, preprint.

\bibitem[CR]{CR}
C. Curtis and I. Reiner, \emph{Representation theory of finite
groups and associative algebras}, J. Wiley, London, 1962.

\bibitem[Eh]{Eh}
R. Ehrenborg, \emph{On apolarity and generic canonical forms}, J.
Algebra {\bf 213} (1999), no. 1, 167-194.

\bibitem[EhR]{EhR}
R. Ehrenborg and G.-C. Rota, \emph{Apolarity and canonical forms for
homogeneous polynomials}, European J. of Combin. {\bf 14} (1993),
 157-182.

\bibitem[Ei]{E}
D. Eisenbud, \emph{Commutative algebra with a view toward
algebraic geometry}. Graduate Texts in Math.  150, Springer-Verlag,
Berlin and New York, 1994.

\bibitem[ElMo]{ElMo}
M. Elkadi and B. Mourrain, \emph{Introduction \`{a} la r\'{e}solution des syst\`{e}mes polynomiaux},  Math\'{e}matiques \& Applications (Berlin) , 59. Springer, Berlin, 2007. vi+305 pp. ISBN: 978-3-540-71646-4; 3-540-71646-7.

\bibitem[Em]{Em}
J. Emsalem, \emph{G\'{e}om\`{e}trie des Points Epais}, Bull. Soc.
Math. France {\bf 106} (1978), 399-416.


\bibitem[EmI]{EmI}
J. Emsalem and A. Iarrobino, \emph{Inverse system of a symbolic
 power I}, J. Algebra. {\bf 174} (1995), 1080-1090.

\bibitem[FL]{FL}
R. Fr$\ddot{o}$berg and D. Laksov, \emph{Compressed algebras}, Conf.
on Complete
 Intersections
in Acireale, (S.Greco and R. Strano, eds.), Lecture Notes in  Math.
 1092, Springer-Verlag, Berlin and New York, 1984, 121--151.

\bibitem[Ge]{Ge}
A. V. Geramita, \emph{Inverse systems of fat points: Waring's
problem, secant varieties of Veronese varieties and parameter spaces
for Gorenstein ideals}, The Curves Seminar at Queen's, Vol X,
(Kingston, 1995), Queen's Papers in Pure and Appl. Math., {\bf 102}
(1996), 1-114.

\bibitem[GeM]{GM}
A. V. Geramita and P. Maroscia, \emph{The ideal of forms vanishing at a
finite set of points in $\mathbb{P}^{n}$}, J. Algebra, {\bf 90}
(1984), 528-555.

\bibitem[Ha]{Ha}
M. Haiman, \emph{Conjectures on the quotient ring by diagonal
invariants}, J. Algebraic Combin. 3 (1994), no. 1, 17-76.

\bibitem[Har]{Har}
T. Harima, \emph{Characterization of Hilbert functions of Gorenstein Artin algebras with the weak Stanley property}, Proc. Amer. Math. Soc. 123 (1995), 3631-3638.

\bibitem[I1]{I1}
A. Iarrobino, \emph{Compressed algebras: Artin algebras having given
socle
 degrees and maximal length}, Trans. Amer. Math. Soc. {\bf 285} (1984), 337-378.

\bibitem[I2]{I3}
\bysame , \emph{Associated graded algebra of a Gorenstein Artin
algebra}, Mem. Amer. Math. Soc. {\bf 107} (1994) no. 514.

\bibitem[I3]{I4}
\bysame, \emph{Inverse system of a symbolic power II: The Waring
 problem for forms}, J. Algebra {\bf 174} (1995), 1091-1110.

\bibitem[I4]{I5}
\bysame , \emph{Inverse system of a symbolic power III: thin
algebras and fat points}, Compositio Math. {\bf 108} (1997),
319-356.

\bibitem[IK]{IK}
A. Iarrobino and V. Kanev, \emph{Power Sums, Gorenstein Algebras, and
Determinantal Loci}, Lecture Notes in Mathematics 1721, Springer,
Heidelberg, 1999.

\bibitem[IKl]{IKl}
A. Iarrobino and S. Kleiman, \emph{The Gotzmann theorems and the Hilbert
scheme}, Appendix C. in A. Iarrobino and V. Kanev, Power Sums,
Gorenstein Algebras, and Determinantal Loci, Lecture Notes in
Mathematics 1721, Springer, Heidelberg, 1999, 289-312.

\bibitem[Kl1]{Kl1}
Jan O. Kleppe, \emph{Maximal families of Gorenstein algebras},
Trans. Amer. Math. Soc. 358 (2006), no. 7, 3133--3167.

\bibitem[Kl2]{Kl2}
\bysame, \emph{Families of Artinian and one-dimensional algebras}, 
J. Algebra 311 (2007), no. 2, 665--701.

\bibitem[Kr1]{Kr1}
M. Kreuzer, \emph{On $0$-dimensional complete intersections}, Math.
Ann. {\bf 292} (1992), 43-58.

\bibitem[Kr2]{Kr}
\bysame , \emph{Some applications of the canonical module of a zero-
dimensional scheme}, Zero-dimensional Schemes, F. Orecchia and L.
Chiantini, Walter de Gruyter, Berlin, 1994, 243-252.

\bibitem[La]{La}
D. Laksov, \emph{Inverse systems, generic linear forms, and divided powers}, preprint, KTH, Stockholm, 2006, revised September 2011.

\bibitem[L-J]{Lj}
M. Lejeune-Jalabert, \emph{Liaison et r\'{e}sidu}, in Algebraic
geometry (La R\'{a}bida, 1981), Lecture Notes in Mathematics 961,
Springer, Berlin-New York, 1982, 233-240.

\bibitem[Mac]{Mac}
F. H. S. Macaulay, \emph{The Algebraic Theory of Modular Systems},
Cambridge Univ. Press, Cambridge, U. K., 1916; reprinted with a
foreword by P. Roberts, Cambridge Univ. Press, London and New York,
1994.

\bibitem[Mar]{Mar}
 P. Maroscia,  \emph{Some problems and results on finite sets of points
in $\mathbb{P}^{n}$}, Open Problems in Algebraic Geometry, VIII,
Proc. Conf. at Ravello, (C. Cilberto, F. Ghione, and F. Orecchia,
eds.) Lecture Notes in Mathematics 997, Springer-Verlag, Berlin and
New York, 1983, 290-314.

\bibitem[MeSm]{MeSm}
D. M. Meyer and L. Smith, \emph{Poicar\'{e} duality algebras, Macualay's dual systems, and Steenrod operations}, Cambridge Tracts in Mathematics, 167. Cambridge University Press, Cambridge, 2005. viii+193 pp.

\bibitem[Mi]{Mi}
R. Michler, \emph{The dual of the torsion module of differentials},
Comm. Alg. {\bf 30} (2002), no. 12, 5639--5650.

\bibitem[Mig1]{Mig}
J. Migliore, \emph{Introduction to Liaison Theory and Deficiency
Modules}, Progress in Math. 165, Birkha\"{a}user, Boston, 1998.

\bibitem[Mig2]{Mig2}
\bysame, \emph{Families of reduced zero-dimensional schemes},
Collect. Math. 57 (2006), no. 2, 173--192.

\bibitem[MigN]{MiN}
J. Migilore and U. Nagel, \emph{Reduced arithmetically Gorenstein schemes and simplicial polytopes with maximal Betti numbers}, Adv. Math. 180 (2003), 1--63.

\bibitem[No]{N}
 D. G. Northcott, \emph{Injective envelopes and inverse polynomials.},
J. London
 Math. Soc.  {\bf 8}
 (1974), no. 2, 290--296.

\bibitem[NR]{NR}
D. G. Northcott and D. Rees, \emph{Principal systems}, Quart. J. Math.
Oxford
 {\bf 8} (1957), no. 2, 119-127.

\bibitem[Or]{Or}
F. Orecchia, \emph{One-dimensional local rings with reduced
associated graded rings, and their Hilbert functions}, Manuscripta
Math. {\bf 32} (1980), no. 3-4, 391-405.

\bibitem[RS1]{RS1}
K. Ranestad and F.-O. Schreyer, \emph{The variety of polar simplices}, 
arXiv:1104.2728 (2011).

\bibitem[RS2]{RS2}
\bysame and \bysame, \emph{On the rank of a symmetric form}, 
arXiv:1104.3648 (2011).

\bibitem[Rez]{Rez}
B. Reznick, \emph{Homogeneous polynomial solutions to constant
coefficient PDE's}, Adv. Math. {\bf 117} (1996), no. 2, 179--192.

\bibitem[Schk]{Schk}
H. Schenck, \emph{Elementary modifications and line configurations in $\mathbb P^2$}, Comment. Math. Helv. 78 (2003), no. 3, 447--462. 

\bibitem[Schz]{Sch}
P. Schenzel, \emph{Local cohomology and duality}, Six lectures on
commutative algebra (Bellaterra, 1996), Progress in Math. 166,
Birkhauser, Basel, (1998), 241--292.

\bibitem[SmSt]{SmSt}
L. Smith and R. E. Stong, \emph{Projective bundle ideals and Poincar\'{e} duality algebras}. J. Pure Appl. Algebra 215 (2011), no. 4, 609--627.

\bibitem[Ter1]{T1}
A. Terracini, \emph{Sulle $V_k$ per cui la variet\`{a} degli
$S_{h}(h+1)$-seganti ha dimensione minore dell'ordinario}, Rend.
Circ. Mat. Palermo \textbf{31} (1911), 527--530.

\bibitem[Ter2]{T2}
\bysame, \emph{Sulla rappresentazione delle coppie di forme ternarie
mediante somme di potenze di forme lineari}, Annali di Mat. Pura
Appl. Serie III, {\bf 24} (1915), 1--10.

\bibitem[Tes]{Tes}
O. Teschke, \emph{Moduli of rank two bundles with Chern classes
$(0,2)$ on quartic double solids}, Sitzungsberichte der Berliner
Mathematischen Gesellschaft, Berliner Mathmatischen Gesellschaft,
Berlin, 2001, 181-200.

\bibitem[Wa]{Wa}
J. Watanabe, \emph{The Dilworth number of Artin Gorenstein rings},
Adv. Math. {\bf 76} (1989), 194-199.

\bibitem[Za]{Za}
F. Zanello, \emph{The
h-vector of a relatively compressed level algebra}, 
Comm. Algebra 35 (2007), no. 4, 1087--1091. 

\bibitem[ZarS]{ZS}
O. Zariski and P. Samuel, \emph{Commutative Algebra, Vol II.}, D.
Van Nostrand Co, Princeton 1960.

\end{thebibliography}

\small
{\bf e-mail addresses:} youngcho{\scriptsize @}\small math.snu.ac.kr and a.iarrobino\scriptsize@\small neu.edu
\end{document}